\newenvironment{customthm}[1]
  {\innercustomthm}
  {\endinnercustomthm}
\newcommand{\Norm}[1]{ \left\|  #1 \right\| }
\newcommand{\floor}[1]{\lfloor #1 \rfloor }
\newcommand{\Be}{\begin{equation}}
\newcommand{\Ee}{\end{equation}}
\newcommand{\Bm}{\begin{multline}}
\newcommand{\Em}{\end{multline}}
\newcommand{\Bea}{\begin{eqnarray}}
\newcommand{\Eea}{\end{eqnarray}}
\newcommand{\Beas}{\begin{eqnarray*}}
\newcommand{\Eeas}{\end{eqnarray*}}
\newcommand{\Benu}{\begin{enumerate}}
\newcommand{\Eenu}{\end{enumerate}}
\newcommand{\Bi}{\begin{itemize}}
\newcommand{\Ei}{\end{itemize}}
\def\intslash{\fint}
\def\intslash{\rlap{\kern  .32em $\mspace {.5mu}\backslash$ }\int}
\def\qsl{{\rlap{\kern  .32em $\mspace {.5mu}\backslash$ }\int_{Q_x}}}
\def\Norm#1{{ \left\|  #1 \right\| }}
\def\floor#1{{\lfloor #1 \rfloor }}
\def\emph#1{{\it #1 }}
\def\be#1{\begin{equation}\label{ #1}}
\def\endeq{\end{equation}}
\def\endal{\end{align}}
\def\bas{\begin{align*}}
\def\eas{\end{align*}}
\def\bi{\begin{itemize}}
\def\ei{\end{itemize}}
\def\emph#1{{\it #1}}
\def\textbf#1{{\bf #1}}
\theoremstyle{plain}
   \newtheorem{theorem}{Theorem}[section]
   \newtheorem{proposition}[theorem]{Proposition}
   \newtheorem{lemma}[theorem]{Lemma}
   \newtheorem{corollary}[theorem]{Corollary}
   \newtheorem{theorem*}{Theorem}
\theoremstyle{remark}
\theoremstyle{definition}
\numberwithin{equation}{section}
\begin{document}
\title[On a generalized Bochner-Riesz square function]{On the square function associated with generalized Bochner-Riesz means}

\author[L. Cladek]{Laura Cladek}

\address{L. Cladek, Department of Mathematics\\ University of Wisconsin-Madison\\480 Lincoln Drive, Madison, WI 53706, USA}

\email{cladek@math.wisc.edu}

\subjclass{42B15}

\begin{abstract}
We consider generalized Bochner-Riesz multipliers of the form $(1-\rho(\xi))_+^{\lambda}$ where $\rho:\mathbb{R}^2\to\mathbb{R}$ belongs to a class of rough distance functions homogeneous with respect to a nonisotropic dilation group. We prove a critical $L^4$ estimate for the associated square function, which we use to derive multiplier theorems for multipliers of the form $m\circ\rho$ where $m:\mathbb{R}\to\mathbb{C}$.
\end{abstract}

\thanks{The author would like to thank Andreas Seeger for introducing this problem, and for his guidance and many helpful discussions.}
\maketitle

\section{Introduction}

The characterization of Fourier multiplier operators that are bounded on $L^p$ when $p\ne 1, 2$ is a difficult open problem that has a long and rich history in harmonic analysis. A particular special case that has been especially studied is the class of radial Fourier multipliers, for which the Bochner-Riesz multipliers are prototypical examples. In \cite{cgt}, Carbery, Gasper and Trebels proved sufficient conditions for a radial function on $\mathbb{R}^2$ to be a Fourier multiplier on $L^p(\mathbb{R}^2)$. Their theorem can be stated as follows.

\begin{customthm}{A}[\cite{cgt}]\label{cgtthm}
Let $m: (0, \infty)\to\mathbb{C}$ be bounded and measurable. Then for $4/3\le p\le 4$ and $\alpha>1/2$,
\begin{align*}
\Norm{m(|\cdot|)}_{M^p(\mathbb{R}^2)}\lesssim\sup_{t>0}\bigg(\int|\mathcal{F}_{\mathbb{R}}[\phi(\cdot)m(t\cdot)](\tau)|^2|\tau|^{2\alpha}\,d\tau\bigg)^{1/2}.
\end{align*}
\end{customthm}
Theorem \ref{cgtthm} is sharp, as can be verified by comparing with the known sharp $L^p$ bounds for Bochner-Riesz multipliers in $\mathbb{R}^2$ (see \cite{feff}). Theorem \ref{cgtthm}  was obtained as a consequence of a critical $L^4$ estimate for the Bochner-Riesz square function in $\mathbb{R}^2$, proved by Carbery in \cite{car2}.
\newline
\indent
In this paper, we extend the result of Theorem \ref{cgtthm} to a class of \textit{quasiradial} multipliers of the form $m\circ\rho$, where $\rho$ belongs to a class of rough distance functions homogeneous with respect to a \textit{nonisotropic} dilation group. Here we may view $\rho(\xi)$ as generalizing the function $|\xi|$, which corresponds to the special case of radial multipliers. Our consideration of such a class of distance functions is in part motivated by the work of Seeger and Ziesler in \cite{sz}, where the authors consider Bochner-Riesz means of the form $(1-\rho(\xi))_+^{\lambda}$ where $\rho$ is the Minkowski functional of a bounded convex domain in $\mathbb{R}^2$ containing the origin. However, the class of distance functions we work with is more general than what is considered in \cite{sz}, since it also includes distance functions $\rho$ that have nonisotropic homogeneity.
\newline
\indent
As motivated by \cite{sz}, let $\Omega\subset\mathbb{R}^2$ be a bounded, open convex set containing the origin. Since the results in this paper are dilation invariant, we will assume that $\Omega$ contains the ball of radius $8$ centered at the origin. Let $M>0$ be the smallest positive integer such that
\begin{align}\label{quantM}
\{\xi:\,|\xi|\le 8\}\subset\Omega\subset\overline{\Omega}\subset\{\xi:\,|\xi|<2^M\}.
\end{align}
This quantity $M$ associated to such a convex domain $\Omega$ is an important parameter on which our results will depend. One may note that it determines the Lipschitz norm of parametrizations of $\partial\Omega$. 
\newline
\indent
We now introduce the notion of a nonisotropic dilation group. Let $A$ be a $2\times 2$ matrix with eigenvalues $\lambda_1$, $\lambda_2$ (not necessarily distinct) such that $\text{Re}(\lambda_1), \text{Re}(\lambda_2)>0$. A \textit{nonisotropic dilation group} associated to $A$ is a one-parameter family $\{t^A:\,t>0\}$, where $t^A=\exp(\log(t)A)$. We say that a pair $(\Omega, A)$ is \textit{compatible} if it satisfies the following:
\begin{enumerate}
\item For any $\xi\in\mathbb{R}^2\setminus\{0\}$ the orbit $\{t^A\xi:\,t>0\}$ intersects $\partial\Omega$ exactly once,
\item If $\Theta(\Omega, A)$ denotes the infimum of all angles between the tangent vector to an orbit $\{t^A\xi:\,t>0\}$ at $\xi$ and a supporting line at $\xi$ for any $\xi\in\partial\Omega$, then $\Theta(\Omega, A)>0$.
\end{enumerate}
We associate to a compatible pair $(\Omega, A)$ a norm function $\rho\in C(\mathbb{R}^2)$, defined by setting $\rho(0)=0$ and setting $\rho(\xi)$ to be the unique $t$ such that $t^{-A}\xi\in\partial\Omega$ if $\xi\ne 0$. If $\partial\Omega$ is smooth, then $\rho\in C^{\infty}(\mathbb{R}^2\setminus\{0\})$. To see this, apply the implicit function theorem to $F(x, t)=\text{dist}(t^Ax, \partial\Omega)$. Moreover, we also have $\Norm{\rho}_{C^{0, 1}(K)}\lesssim_{K, M, \text{Re}(\lambda_1), \text{Re}(\lambda_2), \Theta(\Omega, A)}1$ for any compact $K\subset\mathbb{R}^2\setminus\{0\}$.
\newline
\indent
Note that in the special case that $A$ is the identity, $(\Omega, A)$ is a compatible pair for any bounded, open convex set $\Omega$ satisfying (\ref{quantM}), and we have $\Theta(\Omega, A)\gtrsim_M1$. It was noted in \cite{sw} that for every $A$ there exists a compatible pair $(\Omega, A)$ obtained by taking $\Omega$ to be the region bounded by $\{\xi\in\mathbb{R}^2: \left<B\xi, \xi\right>=1\}$, where $B$ is the positive definite symmetric matrix given by
\begin{align*}
B=\int_0^{\infty}\exp(-tA^{\ast})\exp(-tA)\,dt.
\end{align*}
See \cite{sw} for more details. In this particular case $\partial\Omega$ is smooth; however as already noted in this paper we consider general convex domains, with special emphasis on the case when $\partial\Omega$ is rough.

\subsection*{Notation}
Throughout the rest of the paper, in every situation where it is clear that we have fixed a compatible pair $(\Omega, A)$, we will write $\lesssim$, $\gtrsim$ and $\approx$ to denote inequalities where the implied constant possibly depends on $M$, $\text{Re}(\lambda_1)$, $\text{Re}(\lambda_2)$, and $\Theta(\Omega, A)$. We will also assume that all explicit constants that appear possibly depend on $M$, $\text{Re}(\lambda_1)$, $\text{Re}(\lambda_2)$, and $\Theta(\Omega, A)$.
\newline
\newline
\indent
Given a compatible pair $(\Omega, A)$, define the Bochner-Riesz means $R_t^{\lambda}f$ associated with $(\Omega, A)$ for Schwartz functions $f\in\mathcal{S}(\mathbb{R}^2)$ by
\begin{align*}
\mathcal{R}_t^{\lambda}f(x)=\frac{1}{(2\pi)^2}\int_{|\xi|\le t}\bigg(1-\frac{\rho(\xi)}{t}\bigg)^{\lambda}\hat{f}(\xi)e^{i\left<\xi, x\right>}\,d\xi.
\end{align*}
Define the Bochner-Riesz square function $G^{\lambda}f$ associated with $(\Omega, A)$ for Schwartz functions $f\in\mathcal{S}(\mathbb{R}^2)$ by
\begin{align*}
G^{\lambda}f(x)=\bigg(\int_0^{\infty}\big|\mathcal{R}_t^{\lambda}f(x)\big|^2\,\frac{dt}{t}\bigg)^{1/2}.
\end{align*}
Our main result is the following critical $L^4$ estimate for the Bochner-Riesz square function.
\begin{theorem}\label{mainthm}
Let $(\Omega, A)$ be a compatible pair, and let $G^{\lambda}f$ denote the Bochner-Riesz square function associated to $(\Omega, A)$. For $\lambda>-1/2$, 
\begin{align*}
\Norm{G^{\lambda}f}_{L^4(\mathbb{R}^2)}\lesssim\Norm{f}_{L^4(\mathbb{R})^2}
\end{align*}
for $f\in\mathcal{S}(\mathbb{R}^2)$.
\end{theorem}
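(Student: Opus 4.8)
The plan is to adapt Carbery's proof of the $L^4$ bound for the classical Bochner--Riesz square function in the plane, with convexity of $\Omega$ and the transversality hypothesis $\Theta(\Omega,A)>0$ playing the role of the curvature of the circle, and with the nonisotropic dilations $t^A$ tracked throughout. First I would decompose the multiplier dyadically in the distance to the singularity: write $(1-s)_+^\lambda=\sum_{j\ge0}2^{-j\lambda}\omega_j(s)$ where, for $j\ge1$, $\omega_j$ is smooth, supported in $\{s:1-s\approx2^{-j}\}$, and satisfies $\|\omega_j^{(m)}\|_\infty\lesssim_m 2^{jm}$, while $\omega_0$ is the smooth bulk. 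This splits $\mathcal{R}_t^\lambda=\sum_j2^{-j\lambda}\mathcal{R}_t^{(j)}$, and Minkowski's inequality in $L^2(dt/t)$ gives
\[
G^\lambda f\le\sum_{j\ge0}2^{-j\lambda}G_jf,\qquad G_jf:=\Big(\int_0^\infty\big|\mathcal{R}_t^{(j)}f\big|^2\,\frac{dt}{t}\Big)^{1/2}.
\]
The bulk term $G_0$ is handled by standard Littlewood--Paley theory, so the theorem reduces to the uniform estimate
\[
\big\|G_jf\big\|_{L^4(\mathbb{R}^2)}\lesssim 2^{-j/2}(1+j)^{C}\,\|f\|_{L^4(\mathbb{R}^2)}\qquad(j\ge1)
\]
for a fixed $C$: summing gives $\|G^\lambda f\|_4\lesssim\big(\sum_j2^{-j(\lambda+1/2)}(1+j)^{C}\big)\|f\|_4$, which is finite precisely when $\lambda>-1/2$.

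For fixed $j$, set $\delta=2^{-j}$ and argue by duality together with Plancherel. Writing $\|G_jf\|_4^2=\|(G_jf)^2\|_{L^2}=\sup_{\|g\|_2\le1}\int_0^\infty\!\!\int_{\mathbb{R}^2}|\mathcal{R}_t^{(j)}f|^2g\,dx\,\tfrac{dt}{t}$, expanding $|\mathcal{R}_t^{(j)}f|^2$ via Plancherel in $x$ and performing the $t$-integration first turns the right-hand side into a trilinear form in $\hat f,\hat f,\hat g$ with kernel $K_j(\xi,\zeta)=\int_0^\infty\omega_j(\rho(\xi)/t)\,\overline{\omega_j(\rho(\zeta)/t)}\,\tfrac{dt}{t}$. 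Because $\omega_j$ is supported where its argument is within $\approx\delta$ of $1$, the kernel $K_j$ is supported where $\xi$ and $\zeta$ lie in a common annulus $\{\eta:|\rho(\eta)-r|\lesssim\delta r\}$, and there $|K_j(\xi,\zeta)|\lesssim\delta$; this produces the gain $\delta=2^{-j}$ and confines the remaining analysis to the individual dyadic annuli $\{\rho(\xi)\approx2^k\}$. On each of these, after rescaling by $2^{-kA}$ so that $r\approx1$, one is left with a C\'ordoba-type $L^4$ inequality for functions whose Fourier transforms are supported in a $\delta$-neighborhood of the convex (but only Lipschitz) curve $\partial\Omega$: decomposing this annulus into plates $P_\nu$ that are $\delta$-thin in the normal direction and whose lengths match the arcs over which the normal turns by the corresponding amount, one proves $\|\sum_\nu h_\nu\|_{L^4}^2\lesssim(1+j)^{C}\|(\sum_\nu|h_\nu|^2)^{1/2}\|_{L^4}^2$ for $h_\nu$ Fourier-supported in a fixed dilate of $P_\nu$, the logarithmic factor coming from the overlap of the difference sets $P_\nu-P_{\nu'}$; since the $P_\nu$ are essentially disjoint, the Rubio de Francia inequality bounds the right-hand side by $\|f\|_{L^4}^2$. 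Combining the $\delta$-gain from $K_j$ with this bounded-overlap estimate, and summing the almost-orthogonal contributions over $k$, yields the desired bound on $\|G_jf\|_4$.

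The hard part will be the C\'ordoba-type estimate in the absence of any curvature lower bound on $\partial\Omega$. A general convex curve may contain flat segments and arbitrarily gently curving arcs, so the clean decomposition into $\delta\times\delta^{1/2}$ caps that drives the circular case is unavailable. Instead one must split $\partial\Omega$ into a number of arcs controlled by $M$, treat essentially flat arcs by hand (a $\delta$-neighborhood of a segment is a single box and the associated multiplier is trivially $L^4$-bounded), and group the genuinely turning arcs by the total turning, applying a variant of C\'ordoba's geometric argument within each group; this is where the logarithmic loss $(1+j)^{C}$ appears, and it is absorbed by the summation over $j$. A second, more technical, difficulty is running all of this uniformly against the nonisotropic scaling: the maps $t^A$ are not conformal, so the shapes and orientations of the plates vary with $t$, and the hypothesis $\Theta(\Omega,A)>0$ is exactly what keeps the annuli $\{\xi:|\rho(\xi)-t|\lesssim\delta t\}$ of a uniform essential thickness and the plate geometry nondegenerate across all scales, so that the $t$-integration can be carried out as described.
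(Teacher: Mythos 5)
Your overall strategy---decompose $(1-s)_+^\lambda$ dyadically, reduce to a $\delta=2^{-j}$ annulus square function with a $\delta^{1/2-\epsilon}$ gain, exploit the $t$-integration for a factor of $\delta$, and run a Carbery-type biorthogonality argument over plates adapted to the geometry of $\partial\Omega$---is the same as the paper's (which proves exactly this reduction as Proposition~\ref{mainprop} and then follows \cite{car2} and \cite{sz}). However, several of your specific claims are wrong, and the hardest technical content is not addressed.

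First, you assert that $\partial\Omega$ can be ``split into a number of arcs controlled by $M$.'' This is false: the Seeger--Ziesler decomposition that governs the plates $P_\nu$ (inductively choosing breakpoints where the turning times the length exceeds $\delta$) can produce up to $\sim\delta^{-1/2}$ arcs, with the count depending on the curve, not just on $M$. The analysis has to be uniform in $\delta$, not in a fixed finite list of arcs.

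Second, and more seriously, the step ``since the $P_\nu$ are essentially disjoint, the Rubio de Francia inequality bounds the right-hand side by $\|f\|_{L^4}^2$'' does not work. The plates $P_\nu$ have varying orientations (tangential to $\partial\Omega$), and after nonisotropic rescaling by $(2^k)^A$ the orientation and eccentricity of each plate changes with the scale $k$ in a non-conformal way, since the orbits $\{t^A\xi\}$ are not straight lines. There is no Rubio de Francia--type theorem for families of rectangles at arbitrary orientations (indeed Kakeya sets obstruct this), and even the C\'ordoba/Nagel--Stein--Wainger square-function estimates for sectors do not apply off the shelf here. In the paper this is the content of Proposition~\ref{lwpprop}, whose proof requires grouping the plate multipliers into nested subcollections, each approximable by rectangles with axes in a single fixed direction, and then iterating Carleson's weighted Littlewood--Paley lemma (Lemma~\ref{carllemma}) roughly $\log(1/\delta)$ times against successively applied nonisotropic Nikodym maximal functions. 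You give no indication of this, and the step as stated is a genuine gap.

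Third, the argument as you sketch it requires pulling a dual weight $g$ (or $\omega$) through the plate decomposition, which forces you to control a maximal operator of the form $\sup_{t\approx 2^n}|\psi_t\ast K_{i,j,m,n}\ast g|$. The $L^2$ boundedness of this operator (Proposition~\ref{bigmaxprop}) is nontrivial: the convolution kernels associated to the rough-curve plate multipliers are not dominated by Schwartz bumps adapted to rectangles (unlike the classical case), so one cannot reduce directly to a Nikodym maximal function. The paper has to establish $L^1$ kernel estimates by writing the kernels as oscillatory integrals over $\partial\Omega$ in nonisotropic homogeneous coordinates (Proposition~\ref{kerestprop}), and also needs a nonisotropic $L^2$ Nikodym maximal theorem (Proposition~\ref{maxprop}) obtained by combining C\'ordoba's bound at each scale with a pigeonholing argument over the $(2^k)^A$-dilated rectangle families. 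None of these ingredients appear in your proposal, and the place where they are needed (``Combining the $\delta$-gain $\ldots$ with this bounded-overlap estimate $\ldots$ yields the desired bound'') is exactly where you wave your hands.

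Finally, your treatment of the overlap of difference sets $P_\nu - P_{\nu'}$ is the right idea (this is Proposition~\ref{algdisj}), but you should note that the paper also needs a second almost-disjointness statement for sums $A + B_\rho(0,2/t)$ with $u/t$ very small (Proposition~\ref{imp2}), to handle the interaction of widely separated scales, and a third elementary fact (Proposition~\ref{imp3}) that the logarithmic $t$-measure of scales meeting a given plate is $\lesssim\delta$; your trilinear-kernel calculation captures only the last of these.
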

Following \cite{cgt}, we obtain the subsequent corollary, which is an extension of the result of Theorem \ref{cgtthm} to quasiradial multipliers of the form $m\circ\rho$.
\begin{corollary}\label{maincor}
Let $(\Omega, A)$ be a compatible pair with associated norm function $\rho$. Let $m:\mathbb{R}\to\mathbb{C}$ be measurable function with $\Norm{m}_{L^{\infty}(\mathbb{R})}\le 1$. Then for $4/3\le p\le 4$ and $\alpha>1/2$,
\begin{align*}
\Norm{m\circ\rho}_{M^p(\mathbb{R}^2)}\lesssim
\sup_{t>0}\bigg(\int|\mathcal{F}_{\mathbb{R}}[\phi(\cdot)m(t\cdot)](\tau)|^2|\tau|^{2\alpha}\,d\tau\bigg)^{1/2}.
\end{align*}
\end{corollary}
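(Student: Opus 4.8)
The plan is to follow the route of \cite{cgt}: establish the bound first for multipliers $m$ supported in a fixed compact interval of $(0,\infty)$, and then reassemble by a Littlewood--Paley argument. Write $B$ for the right-hand side of the asserted inequality and fix a real bump $\phi$ equal to $1$ on $[1/2,2]$; since $\alpha>1/2$ one has $H^\alpha(\mathbb R)\hookrightarrow L^\infty(\mathbb R)$, so the normalization $\|m\|_\infty\le 1$ is not essential and the low-frequency part of each local Sobolev norm is harmless.

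\emph{Single block.} Suppose $m$ is supported in $[1/2,2]$; I would prove $\|m\circ\rho\|_{M^4(\mathbb R^2)}\lesssim\|m\|_{H^\alpha(\mathbb R)}$ for every such $m$. Put $\lambda:=\alpha-1$, so that $\lambda>-1/2$ because $\alpha>1/2$. The first step is a reproducing formula writing $m$ as a superposition of the dilated Bochner--Riesz profiles: there is $b$ on $(0,\infty)$, supported in $(0,2]$, with $m(s)=\int_0^\infty(1-s/t)_+^{\lambda}\,b(t)\,\frac{dt}{t}$. On the Mellin side this is multiplication of $\mathcal M m$ by the reciprocal of $\mathcal M[(1-\cdot)_+^\lambda](z)=\Gamma(z)\Gamma(\lambda+1)/\Gamma(z+\lambda+1)$, a factor that grows like $|z|^{\lambda+1}$ on vertical lines and vanishes at $z=0$; hence by Mellin--Plancherel $\|b\|_{L^2(dt/t)}\lesssim\|m\|_{H^{\lambda+1}(\mathbb R)}=\|m\|_{H^\alpha(\mathbb R)}$, the compact support of $m$ making the relevant Mellin and Euclidean Sobolev norms comparable. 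For $f\in\mathcal S(\mathbb R^2)$ the Fourier multiplier operator $T$ with symbol $m\circ\rho$ then satisfies $Tf=\int_0^2\mathcal R_t^{\lambda}f\,b(t)\,\frac{dt}{t}$, and Cauchy--Schwarz in $t$ against $\frac{dt}{t}$ gives $|Tf(x)|\le G^{\lambda}f(x)\,\|b\|_{L^2(dt/t)}$; Theorem \ref{mainthm} then yields $\|Tf\|_{L^4}\lesssim\|m\|_{H^\alpha}\|f\|_{L^4}$.

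\emph{Reassembly.} For general $m$, take a real $\psi\in C_c^\infty([1/2,2])$ with $\sum_{k\in\mathbb Z}\psi(2^{-k}s)=1$ for $s>0$ and $\phi>0$ on $\operatorname{supp}\psi$, and set $m_k:=m\,\psi(2^{-k}\cdot)$, so $m\circ\rho=\sum_k m_k\circ\rho$. Since $t^A$ is linear with $\rho(t^A\xi)=t\rho(\xi)$, the multiplier norm $\|m_k\circ\rho\|_{M^p}$ is unchanged under the substitution $\xi\mapsto2^{kA}\xi$, which replaces $m_k\circ\rho$ by $n_k\circ\rho$ with $n_k:=m(2^k\cdot)\psi$ supported in $[1/2,2]$ and $\|n_k\|_{H^\alpha}\lesssim\|\phi\,m(2^k\cdot)\|_{H^\alpha}\le B$. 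By the single-block estimate (applied also to $\bar m$) the operators $T_k$ with symbol $m_k\circ\rho$ and their adjoints are bounded on $L^4$ with norm $\lesssim B$ uniformly in $k$; moreover $T_k$ is frequency-localized to the annulus $\{\rho\sim2^k\}$, and the reproducing formula for $n_k$, rescaled, gives $T_kf=\int_{u\sim2^k}\mathcal R_u^{\lambda}(\Delta_kf)\,b_k(u)\,\frac{du}{u}$ with $\|b_k\|_{L^2(du/u)}\lesssim B$, where $\Delta_k$ is a smooth Fourier projection onto $\{\rho\sim2^k\}$ (so $T_k=T_k\Delta_k$). To sum over $k$ I would use the Littlewood--Paley inequality $\|(\sum_k|\Delta_kf|^2)^{1/2}\|_{L^p}\approx\|f\|_{L^p}$ for $1<p<\infty$---this is the point at which the convexity of $\Omega$ and the compatibility of $(\Omega,A)$ enter, since $\rho$ may only be Lipschitz and the $\Delta_k$ are smooth truncations to nonisotropic dilates of the convex body $\Omega$ (cf. \cite{sz})---together with the $u$-localized representation above: after substituting $u=2^kv$ and using the bounded overlap of the annuli $\{\rho\sim2^k\}$, the square function arising in the joint variable $(k,v)$ is dominated by $G^\lambda$, to which Theorem \ref{mainthm} applies. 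This gives $\|\sum_k T_kf\|_{L^4}\lesssim B\|f\|_{L^4}$; the $L^{4/3}$ bound follows by duality (the adjoint $\sum_k T_k^\ast$ has symbol $\bar m\circ\rho$, which satisfies the same hypothesis), and Riesz--Thorin interpolation gives the full range $4/3\le p\le4$.

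\emph{Main obstacle.} The essential work is the reassembly: establishing the Littlewood--Paley inequality for the $\rho$-annuli in the rough, nonisotropic geometry, and then organizing the summation over frequency blocks so that the square-function structure of the $T_k$ matches $G^\lambda$ with no loss---naive use of Minkowski's inequality loses at every $p\ne2$, so one must stay at the level of $L^p$-norms of square functions throughout. A secondary technical point is that $\lambda=\alpha-1$ is negative when $\alpha<1$, so the profile $(1-s/t)_+^\lambda$ is unbounded; the reproducing formula and the bound $\|b\|_{L^2(dt/t)}\lesssim\|m\|_{H^\alpha}$ nevertheless hold, as the Mellin computation shows---it is precisely the vanishing of the Mellin factor at $z=0$ that keeps $\|b\|_{L^2(dt/t)}$ finite there.
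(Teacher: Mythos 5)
The paper proves Corollary \ref{maincor} only by the remark that it follows from \cite{cgt}, and your argument reconstructs precisely that route: a Mellin reproducing formula $m(s)=\int_0^\infty(1-s/t)_+^\lambda b(t)\,\frac{dt}{t}$ with $\|b\|_{L^2(dt/t)}\lesssim\|m\|_{H^\alpha}$, Cauchy--Schwarz in $t$ against the square function together with Theorem \ref{mainthm} for a single dyadic block, then Littlewood--Paley reassembly, and duality plus interpolation for $4/3\le p\le 4$. This is the approach the paper intends, and your single-block Mellin computation (including the observation that the zero of $1/\Gamma$ at $z=0$ compensates for the unboundedness of $(1-\cdot)_+^\lambda$ when $\alpha<1$) is correct.

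The one step that needs more care than the sketch suggests is the recombination: the square function $\bigl(\sum_k\int_{v\approx1}|\mathcal R^\lambda_{2^kv}\Delta_kf|^2\,\frac{dv}{v}\bigr)^{1/2}$ is not pointwise $\lesssim G^\lambda f$, because $\mathcal R_u^\lambda f$ for $u\approx2^k$ depends on all frequencies with $\rho\le u$, not just on $\Delta_kf$. To close the sum, first decompose $(1-\rho/u)_+^\lambda$ as is done in the proof that Proposition \ref{mainprop} implies Theorem \ref{mainthm}: a low-frequency bump, a smooth middle annulus, and thin annuli of width $\delta=2^{-j}$. The low-frequency bump annihilates $\Delta_kf$ for $u\approx2^k$; the middle annulus is a nonisotropic Littlewood--Paley piece handled as in (\ref{singint}); and each thin-annulus piece is supported where $\Delta_k$ equals the identity (up to $O(1)$ overlap in $k$), so the $k$-sum collapses and Proposition \ref{mainprop} applies directly with gain $\delta^{1/2-\epsilon}$, summable in $j$ for $\lambda>-1/2$. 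Your closing paragraph shows you already see this as the crux, so this is a matter of execution, not a missing idea.
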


To prove Theorem \ref{mainthm}, we will first decompose the multiplier $(1-\rho(\xi))_+^{\lambda}$ in a standard fashion into smooth functions supported on ``annuli" of thickness comparable to the distance from $\partial\Omega$ (for example, see \cite{cor1}, \cite{car2}). Theorem \ref{mainthm} then reduces to proving the following proposition.
\begin{proposition}\label{mainprop}
Let $(\Omega, A)$ be a compatible pair. Fix a Schwartz function $\Phi:\mathbb{R}\to\mathbb{R}$ supported in $[-1, 1]$ with $|\Phi|\le 1$. There is a constant $C>0$ such that for every $\epsilon>0$ and every $0<\delta<C$,
\begin{align*}
\Norm{\bigg(\int_0^{\infty}|\psi_t\ast f(x)|^2\frac{\,dt}{t}\bigg)^{1/2}}_4\lesssim_{\epsilon}\delta^{1/2-\epsilon}\Norm{f}_4,
\end{align*}
where 
\begin{align*}
\psi_t(x)=\mathcal{F}(\phi(\frac{\rho(\cdot)}{t}))(x),\qquad \phi(\xi)=\Phi(\frac{\xi-1}{\delta}).
\end{align*}
\end{proposition}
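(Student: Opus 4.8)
## Proof Proposal for Proposition \ref{mainprop}

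The plan is to reduce the $L^4$ square-function estimate to an $L^2$ estimate for a localized operator by a standard Córdoba-style argument, then exploit the geometry of the compatible pair $(\Omega, A)$ to obtain the required orthogonality and kernel decay. First I would perform a Whitney-type decomposition of $\partial\Omega$ into roughly $\delta^{-1/2}$ arcs $\{I_\nu\}$, each of which is, after applying an appropriate nonisotropic dilation and a suitable rotation, the graph of a function with bounded $C^{1,1}$-type behavior over a $\delta^{1/2}$-interval, with the arc curving away from its tangent line by $O(\delta)$. (This is where the compatibility hypotheses enter: condition (2), i.e. $\Theta(\Omega,A)>0$, guarantees the orbits are transverse to $\partial\Omega$ so that the $\delta$-neighborhood of $\partial\Omega$ in the $\rho$-metric is comparable to the Euclidean $\delta$-neighborhood, and the bound on $M$ controls the number and aspect ratio of the pieces.) Correspondingly $\phi(\rho(\cdot)/t)$ splits as $\sum_\nu \phi_{t,\nu}$ where $\phi_{t,\nu}$ is supported on a $\delta$-plank (a $\delta^{1/2}\times\delta$ rectangle, in the dilated coordinates) tangent to the scaled boundary. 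The key point is that on the support of each $\phi_{t,\nu}$ the curved region $\{\rho \le t\}$ is well-approximated by a half-plane, so after freezing the relevant parabolic rescaling the pieces $\psi_{t,\nu}=\mathcal F(\phi_{t,\nu})$ behave like the pieces in the classical $\mathbb R^2$ Bochner-Riesz square function analyzed by Carbery.

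Next I would square and expand: writing $\psi_t \ast f = \sum_\nu \psi_{t,\nu}\ast f$, the $L^4$ norm of the square function is the $L^2$ norm of $\int_0^\infty |\sum_\nu \psi_{t,\nu}\ast f(x)|^2\,dt/t$, and after expanding the modulus-squared this becomes a sum over pairs $(\nu,\nu')$ of $\int_0^\infty (\psi_{t,\nu}\ast f)\overline{(\psi_{t,\nu'}\ast f)}\,dt/t$. Passing to the Fourier side in $x$, the product $\widehat{\psi_{t,\nu}}\,\widehat{\psi_{t,\nu'}}$ is supported where two $\delta$-planks (at scale $t$) overlap. The crucial geometric input — again from curvature of $\partial\Omega$, uniform in the compatible family because of the $C^{0,1}$ bound on $\rho$ — is that two planks at scale $t$ with angular separation $\sim 2^j \delta^{1/2}$ have intersection of measure $\sim 2^{-j}\delta^{3/2}$ and, as $t$ varies, the frequency supports sweep out a set with bounded overlap. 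This yields an almost-orthogonality: grouping by dyadic angular separation $2^j\delta^{1/2}$, the cross terms at separation level $j$ contribute a factor gaining $2^{-j}$ (or better), so that summing over $j$ costs only a logarithmic or constant factor, which is absorbed into the $\delta^{-\epsilon}$.

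The diagonal-in-$j$ estimate itself reduces to a single-plank $L^2$ bound: for fixed scale one must show $\|\int_0^\infty |\psi_{t,\nu}\ast g|^2\,dt/t\|_{L^2} \lesssim \delta \|g\|_4^2$ summed appropriately, which after Plancherel in $x$ and a change of variables turns into controlling $\int_0^\infty |\widehat{\psi_{t,\nu}}(\xi)|^2\,dt/t$ pointwise — this is $O(\delta)$ because for fixed $\xi$ the plank at scale $t$ contains $\xi$ only for $t$ in an interval of logarithmic length $\sim\delta$ — combined with the $\delta^{1/2}$-separation of the $\delta^{-1/2}$ directions to run a Córdoba square-function / $L^4$ argument on the plank pieces. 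Here I would invoke the $C^{0,1}(K)$ bound on $\rho$ to ensure the implied constants are uniform over the compatible family, and handle the rough (non-smooth $\partial\Omega$) case by noting that the Whitney decomposition and the plank geometry only require the Lipschitz/convexity structure, not smoothness.

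The main obstacle I anticipate is the interaction between the nonisotropic dilation $t^A$ and the parabolic rescaling used to flatten each boundary arc: unlike the Euclidean case, the family $\{t^A\}$ does not commute with the anisotropic dilations adapted to individual Whitney planks, so one must check that the distortion introduced by $t^A$ over the range of $t$ relevant to a single plank (a multiplicative window of size $1+O(\delta)$) is negligible, and that the eigenvalue condition $\mathrm{Re}(\lambda_i)>0$ together with $\Theta(\Omega,A)>0$ is exactly what keeps the planks from degenerating. Making the plank geometry and the bounded-overlap count uniform in $(\Omega,A)$ — in particular tracking how everything depends only on $M$, $\mathrm{Re}(\lambda_1)$, $\mathrm{Re}(\lambda_2)$, and $\Theta(\Omega,A)$ — is the technical heart of the argument.
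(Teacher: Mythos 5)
Your outline captures the high-level strategy (Córdoba-style reduction, bounded overlap of frequency supports, maximal function input, Littlewood-Paley), but there is a genuine gap at the very first step: the decomposition of $\partial\Omega$ into roughly $\delta^{-1/2}$ arcs, each approximated by a $\delta^{1/2}\times\delta$ rectangle, is \emph{not} the right decomposition when $\partial\Omega$ is merely convex. Your argument that a piece of the boundary of width $\delta^{1/2}$ ``curv[es] away from its tangent line by $O(\delta)$'' is a curvature statement; it is true for the circle, and more generally for $C^2$ boundaries with curvature bounded below, but it fails dramatically for, say, a polygon, where a $\delta^{1/2}$-arc lying on a flat edge does not curve away at all. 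The paper's whole point is to handle such rough domains. Its decomposition, taken from Seeger--Ziesler, is an adaptive one: the sequence $\{a_l\}$ is defined by the stopping-time condition $(t-a_{l-1})(\gamma_L'(t)-\gamma_R'(a_{l-1}))\le \delta$, which produces pieces of width anywhere between $\delta$ and $1$, each of which is flat at scale $\delta$ by construction rather than by a curvature hypothesis. This changes the combinatorics: the crucial bounded-overlap statement is not ``planks at angular separation $2^j\delta^{1/2}$ overlap with measure $2^{-j}\delta^{3/2}$'' but the more delicate Proposition \ref{algdisj}, which only gives overlap multiplicity $\lesssim (\log\delta^{-1})^2$ for the algebraic sums, uniformly over all convex domains satisfying (\ref{quantM}). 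Your curvature-based overlap count simply does not exist in this generality.

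The second gap is the one you yourself flag and then set aside as ``the technical heart'': the noncommutativity of $t^A$ with the plank-adapted dilations. This is not a detail that can be checked to be negligible; it is where the paper departs most sharply from both Carbery and Seeger--Ziesler. Because the orbits $\{t^A\xi\}$ are curved, the pieces $B_{i,j,m,n}$ with a fixed $j$ have long axes whose directions rotate as $m$ (and $n$) vary, so they cannot all be captured by a single family of parallel strips. The paper resolves this with a multi-scale \emph{nested} Littlewood--Paley argument (Proposition \ref{lwpprop}): for each $j$ it groups the $m$-indices into a tree of depth $N\approx \log\delta^{-1}/\log 2^a$, at each level covering the relevant supports by almost-disjoint parallel rectangles whose eccentricity decreases, and iterates Corollary \ref{lwpcor} $N$ times, with the maximal-function composition at the end controlled by Proposition \ref{maxprop}. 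Without this device (or an equivalent), the square-function bound $\|(\sum |\tilde P_{i,j,m,n}f|^2)^{1/2}\|_4 \lesssim \delta^{-\epsilon}\|f\|_4$ does not follow, and the reduction to the diagonal cannot be closed. In short: the skeleton of your argument is sound, but both the decomposition you start from and the Littlewood--Paley machinery you gesture at need to be replaced by the adaptive Seeger--Ziesler decomposition and the nested Carleson-lemma iteration, respectively, and these replacements are not cosmetic.
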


The overall structure of the proof of Proposition \ref{mainprop} will follow \cite{car2} and \cite{sz}, and will draw heavily on the techniques therein. However, the presence of nonisotropic dilations and the roughness of $\partial\Omega$ introduces new difficulties to the proof since the underlying geometry becomes more complicated, requiring more intricate decompositions on the Fourier side as well as a more sophisticated use of Littlewood-Paley inequalities.

\section{preliminaries on convex domains in $\mathbb{R}^2$}

\subsubsection*{Elementary facts about convex functions in $\mathbb{R}^2$}
We note for later use the following lemma, which can be found in \cite{sz}. The proof is straightforward and we omit it here, and the reader is encouraged to refer to \cite{sz} for a proof.
\begin{lemma}[\cite{sz}]\label{elemlemma}
$\partial\Omega\cap\{x:\,-1\le x_1\le 1\}$ can be parametrized by
\begin{align}
t\mapsto (t, \gamma(t)),\qquad{}-1\le t\le 1,
\end{align}
where
\begin{enumerate}
\item
\begin{align}
1<\gamma(t)<2^M,\qquad{} -1\le t\le 1.
\end{align}
\item $\gamma$ is a convex function on $[-1, 1]$, so that the left and right derivatives $\gamma_L^{\prime}$ and $\gamma_R^{\prime}$ exist everywhere in $(-1, 1)$ and
\begin{align}
-2^{M-1}\le\gamma_R^{\prime}(t)\le\gamma_L^{\prime}(t)\le 2^{M-1}
\end{align}
for $t\in [-1, 1]$. The functions $\gamma_L^{\prime}$ and $\gamma_R^{\prime}$ are decreasing functions; $\gamma_L^{\prime}$ and $\gamma_R^{\prime}$ are right continuous in $[-1, 1]$.
\item Let $\ell$ be a supporting line through $\xi\in\partial\Omega$ and let $n$ be an outward normal vector. Then
\begin{align}
|\left<\xi, n\right>|\ge 2^{-M}|\xi|.
\end{align}
\end{enumerate}
\end{lemma}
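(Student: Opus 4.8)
The plan is to reduce everything to standard facts about convex functions once we have produced the graph parametrization. First I would use the normalization $\{|\xi|\le 8\}\subset\Omega$ together with $\overline\Omega\subset\{|\xi|<2^M\}$: the segment $\{x_1=t,\ x_2>0\}$ enters $\Omega$ (since $(t,2)\in\Omega$ for $|t|\le 1$, as $|(t,2)|\le 3<8$) and eventually leaves it, so by convexity the set $\{s:\ (t,s)\in\Omega\}$ is an open interval, and its upper endpoint $\gamma(t)$ is the unique point of $\partial\Omega$ above $(t,0)$ on that vertical line. This gives the parametrization $t\mapsto(t,\gamma(t))$, and the bound $1<\gamma(t)<2^M$ follows immediately: $\gamma(t)>1$ because $(t,1)\in\{|\xi|\le 8\}\subset\Omega$ (here $|(t,1)|\le\sqrt2$), and $\gamma(t)<2^M$ because $(t,\gamma(t))\in\overline\Omega\subset\{|\xi|<2^M\}$ forces $\gamma(t)<2^M$. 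This is item (1).

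For item (2), convexity of $\gamma$ is the standard fact that the upper boundary of a convex region, written as a graph over a line, is a concave function — but here the statement is phrased with $\gamma$ convex, so I would instead observe that $\Omega$ lies \emph{below} its upper boundary, i.e. $\{(t,s):\ s\le\gamma(t)\}\cap(\text{slab})$ is convex (it is the intersection of $\Omega$ with the slab and the closed lower half-region), hence $\gamma$ is concave; matching the sign convention of \cite{sz} I will simply record it in whichever orientation the cited source uses and cite \cite{sz} for the precise normalization. Once $\gamma$ is concave (resp.\ convex), existence of one-sided derivatives everywhere in $(-1,1)$, their monotonicity, and one-sided continuity are classical properties of monotone functions; the two-sided bound $|\gamma'_{L,R}(t)|\le 2^{M-1}$ comes from comparing values of $\gamma$: for $-1\le s<t\le 1$ we have $|\gamma(t)-\gamma(s)|\le$ (difference of two numbers in $(1,2^M)$), so a slope over an interval of length $\le 2$ would be unbounded this way — instead I would use that a supporting line at an endpoint, say at $t=1$, cannot cross back into $\{|\xi|\le 8\}$, which pins the slope: the chord from $(-1,\gamma(-1))$ to $(1,\gamma(1))$ has slope of size at most $2^{M-1}$ since both endpoints lie in the annulus $1<|\xi|<2^M$ and more precisely the tangent line at any boundary point must keep the ball $\{|\xi|\le 8\}$ on one side, giving the geometric slope bound; again I defer the bookkeeping to \cite{sz}.

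For item (3), let $\xi\in\partial\Omega$, let $\ell$ be a supporting line through $\xi$ with outward unit normal $n$, so $\Omega\subset\{\eta:\ \langle\eta,n\rangle\le\langle\xi,n\rangle\}$. Since $\{|\eta|\le 8\}\subset\Omega$, taking $\eta=8n$ gives $8\le\langle\xi,n\rangle$, hence $\langle\xi,n\rangle\ge 8$. On the other hand $|\xi|<2^M$, so $\langle\xi,n\rangle\ge 8>2^{-M}\cdot 2^M>2^{-M}|\xi|$, which is the claimed inequality. (If one wants the constant exactly as stated, $8\cdot 2^{-M}\cdot 2^M = 8$ comfortably exceeds $|\xi|\,2^{-M}$.)

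The whole lemma is elementary and, as the authors note, is taken verbatim from \cite{sz}; the only mild care needed is the orientation convention for ``convex versus concave'' and tracking that the various constants depend only on $M$. I expect no genuine obstacle — the one point to state carefully is that the slope bound in item (2) is equivalent to the separation estimate in item (3) (a supporting line of a convex body containing a fixed ball around the origin has controlled slope precisely because its distance from the origin is bounded below and above), so I would prove (3) first and then read off (2) from it, or simply cite \cite{sz} for both.
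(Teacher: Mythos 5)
The paper itself gives no proof of this lemma; it is quoted verbatim from \cite{sz} with the remark that the proof is straightforward and omitted. So there is no ``paper's route'' to compare against, and your reconstruction is the appropriate thing to supply. It is correct in outline, and your argument for item (3) is exactly the right one: with $n$ the outward unit normal to the supporting line, $\Omega\subset\{\eta:\langle\eta,n\rangle\le\langle\xi,n\rangle\}$, so $\eta=8n\in\Omega$ gives $\langle\xi,n\rangle\ge 8$, while $2^{-M}|\xi|<1$ by the other half of (\ref{quantM}); this proves (3) with a large margin. Your treatment of item (1) is the standard vertical-section argument and is fine.

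You are also right to flag the orientation convention in item (2). Note that the lemma's own displayed conditions, $\gamma'_R(t)\le\gamma'_L(t)$ and ``$\gamma'_L,\gamma'_R$ are decreasing,'' are the \emph{concavity} conditions for $\gamma$, despite the word ``convex'' in the statement; and the bound $1<\gamma(t)<2^M$ is incompatible with the paper's stated normalization $\{|\xi|\le 8\}\subset\Omega$ (which would force $\gamma(t)\ge\sqrt{63}$). Both are artifacts of the statement being lifted from \cite{sz}, where the dilation normalization and the choice of boundary arc are set up slightly differently; deferring that bookkeeping to \cite{sz}, as you do, is reasonable and matches what the paper itself does.

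One small inaccuracy: you suggest items (2) and (3) are essentially equivalent, and you could prove (3) first and read off (2). That is not quite right. (3) follows trivially from $\langle\xi,n\rangle\ge 8$ together with $|\xi|<2^M$, but (2) genuinely uses the distance-from-origin bound on the supporting line, not just (3). The correct version of what you sketch: if the supporting line at $(t,\gamma(t))$, $|t|\le 1$, has slope $m$, then its distance from the origin is $|mt-\gamma(t)|/\sqrt{1+m^2}\ge 8$, so $|m|+2^M\ge|mt-\gamma(t)|\ge 8\sqrt{1+m^2}\ge 8|m|$, giving $|m|\le 2^M/7<2^{M-2}$. This pins the one-sided slopes, and the monotonicity and one-sided continuity of $\gamma'_L,\gamma'_R$ are the classical facts you cite. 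So the geometric idea is correct; it just uses both containments in (\ref{quantM}) directly rather than passing through (3).
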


\subsubsection*{Reduction to the case when $\partial\Omega$ is smooth.}

Motivated by \cite{sz}, Lemma $2.2$, we will show that it suffices to prove Proposition \ref{mainprop} with the implied constant depending only on $M$ (and not, for instance, the $C^2$ norm of local parametrizations of $\partial\Omega$) in the special case that $\partial\Omega$ is smooth. The first step is to approximate $\Omega$ by a sequence of convex domains with smooth boundaries satisfying the same quantitative estimates as $\Omega$.
\begin{lemma}\label{approxlemma}
Let $(\Omega, A)$ be a  compatible pair. There is a sequence of convex domains $\{\Omega_n\}$ satisfying the following:
\begin{enumerate}
\item $\partial\Omega_n$ is $C^{\infty}$,
\item For $n$ sufficiently large, $(\Omega_n, A)$ is a compatible pair and $\Theta(\Omega_n, A)\ge \Theta(\Omega, A)/2$,
\item For each $n$ we have \begin{align*}
\{\xi:\,|\xi|\le 4\}\subset\Omega_n\subset\overline{\Omega_n}\subset\{\xi:\,|\xi|<2^{M+1}\},
\end{align*}
\item $\lim_{n\to\infty}\rho_n(\xi)=\rho(\xi)$ with uniform convergence on compact sets.
\end{enumerate}
\end{lemma}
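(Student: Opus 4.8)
The plan is to construct the $\Omega_n$ by mollification of the boundary, or equivalently by taking $\Omega_n$ to be a slightly shrunk and smoothed version of $\Omega$, and then to verify the four listed properties by elementary convexity estimates. Concretely, I would first fix a radial bump $\chi\in C_0^\infty(\mathbb{R}^2)$ with $\int\chi=1$ supported in the unit ball, set $\chi_\eta(\xi)=\eta^{-2}\chi(\xi/\eta)$, and define $\Omega_n$ to be the set whose Minkowski functional (relative to the Euclidean structure, or more invariantly relative to $\Omega$ itself) is the mollification $\rho_n^{\mathrm{eucl}} = \rho^{\mathrm{eucl}}\ast\chi_{\eta_n}$ of the Minkowski functional $\rho^{\mathrm{eucl}}$ of $\Omega$, with $\eta_n\to 0$; since $\rho^{\mathrm{eucl}}$ is convex and positively homogeneous of degree $1$, so is each $\rho_n^{\mathrm{eucl}}$, hence $\Omega_n := \{\rho_n^{\mathrm{eucl}}<1\}$ is open, convex, and bounded. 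Using the local graph parametrization from Lemma \ref{elemlemma} one sees that this mollification corresponds, locally near $\partial\Omega$, to replacing the convex graphing function $\gamma$ by $\gamma\ast(\text{1D mollifier})$ (up to a reparametrization controlled by $M$), which is smooth and strictly convex-ish in the weak sense, and whose first derivatives still obey the bound $2^{M-1}$ in absolute value up to an additive $o(1)$. This gives property (1), and since the graphing-function derivatives stay bounded by, say, $2^{M}$ for $n$ large, property (3) follows from the containment $\{|\xi|\le 8\}\subset\Omega\subset\{|\xi|<2^M\}$ after accounting for the $O(\eta_n)$ perturbation of the boundary (shrinking $8$ to $4$ and enlarging $2^M$ to $2^{M+1}$ absorbs this).

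Next I would establish property (4): uniform convergence of $\rho_n$ to $\rho$ on compact subsets of $\mathbb{R}^2\setminus\{0\}$ (and at $0$ trivially). Since $\rho_n^{\mathrm{eucl}}\to\rho^{\mathrm{eucl}}$ uniformly on compacta (standard mollifier convergence for a continuous, indeed Lipschitz, function), the boundaries $\partial\Omega_n$ converge to $\partial\Omega$ in Hausdorff distance, uniformly. The $A$-norm function $\rho$ is recovered from $\partial\Omega$ by: $\rho(\xi)=t$ iff $t^{-A}\xi\in\partial\Omega$. Because compatibility of $(\Omega,A)$ forces the orbit $t\mapsto t^{-A}\xi$ to cross $\partial\Omega$ transversally (angle $\ge\Theta(\Omega,A)>0$) and the orbits depend continuously on $\xi$, a Hausdorff-small perturbation of $\partial\Omega$ moves the unique crossing time by a correspondingly small amount; quantifying this via the implicit function theorem applied to $(\xi,t)\mapsto \rho_n^{\mathrm{eucl}}(t^{-A}\xi)-1$, whose $t$-derivative is bounded below in terms of $\Theta$ and $\mathrm{Re}\,\lambda_i$, yields the uniform convergence $\rho_n\to\rho$ on compact sets.

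The main obstacle — and the step I would spend the most care on — is property (2): that $(\Omega_n,A)$ is a compatible pair with $\Theta(\Omega_n,A)\ge\Theta(\Omega,A)/2$ for $n$ large. Condition (i) of compatibility (each orbit meets $\partial\Omega_n$ exactly once) is immediate from convexity of $\Omega_n$ together with the fact that $\{t^A\xi\}$ is a curve emanating from the origin to infinity along which $\rho^{\mathrm{eucl}}$, hence $\rho_n^{\mathrm{eucl}}$, is eventually increasing — more precisely it follows since $\rho_n$ is a well-defined continuous function once $\Omega_n$ is convex and star-shaped about $0$, which it is. The real content is the \emph{quantitative} transversality bound on $\Theta(\Omega_n,A)$. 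Here one must control the outward normal direction to $\partial\Omega_n$: a priori, mollifying a merely convex (not $C^1$) boundary could create regions where the normal direction of $\partial\Omega_n$ differs substantially from any supporting-line normal of $\partial\Omega$ at a nearby point. However, convexity saves us: the normal cone of $\Omega$ at a boundary point $\xi$ is the set of supporting directions there, and for the mollified domain the normal to $\partial\Omega_n$ at a point $\xi'$ near $\xi$ is a \emph{convex combination} (weighted average) of supporting directions of $\Omega$ at points within distance $O(\eta_n)$ of $\xi'$; since the angle between any such supporting direction and the orbit tangent at $\xi$ is at least $\Theta(\Omega,A)$, and a convex combination of vectors lying in a half-plane-like cone of angular width $\pi-2\Theta$ about a fixed axis still lies in that cone (cones are convex), the same angular lower bound — degraded by the $O(\eta_n)$ wobble of the orbit tangent and base point, hence at worst to $\Theta(\Omega,A)/2$ for $n$ large — persists. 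Making this averaging argument precise (identifying the gradient $\nabla\rho_n^{\mathrm{eucl}}$ at $\xi'$ with a weighted integral of $\nabla\rho^{\mathrm{eucl}}$ over the mollification window, and hence with an average of outward normals of $\partial\Omega$, then invoking that the orbit-tangent field is Lipschitz on compacta with constant depending on $\mathrm{Re}\,\lambda_i$ and $\|A\|$) is the crux, after which properties (1), (3), (4) drop out as above.
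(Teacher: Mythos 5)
Your plan --- globally mollifying the Euclidean Minkowski functional $\rho^{\mathrm{eucl}}$ of $\Omega$ and taking $\Omega_n=\{\rho_n^{\mathrm{eucl}}<1\}$ --- is a genuinely different route from the paper's, which instead inscribes a convex polygon $P_n$ with vertices on $\partial\Omega$ and then rounds each vertex by a local one-dimensional mollification of the graphing function. Your key observation for property (2), that $\nabla\rho_n^{\mathrm{eucl}}(\xi')=\int\nabla\rho^{\mathrm{eucl}}(\xi'-y)\chi_{\eta_n}(y)\,dy$ is a positive-weighted average of outward normals of $\partial\Omega$ at nearby boundary points, all of which lie (by compatibility of $(\Omega,A)$ plus Lipschitz dependence of the orbit tangent field $\xi\mapsto A\xi$) in a convex cone of half-angle at most $\pi/2-\Theta(\Omega,A)+O(\eta_n)$ about the orbit tangent at $\xi'$, is correct and does the same work as the paper's comparison of tangent slopes of $\partial\Omega_n$, supporting lines of $\partial\Omega$ at the nearest vertex, and orbit tangents at that vertex. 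The two constructions buy essentially the same thing; the global mollification is arguably more uniform, while the paper's polygonal construction sidesteps questions about homogeneity altogether.

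There are, however, two errors in the supporting reasoning. First, Euclidean mollification does \emph{not} preserve degree-$1$ positive homogeneity, so $\rho_n^{\mathrm{eucl}}$ is not a Minkowski functional; $\Omega_n$ is nonetheless open, bounded, convex with $0$ interior, but that follows from the $L^\infty$-closeness of $\rho_n^{\mathrm{eucl}}$ to $\rho^{\mathrm{eucl}}$ on compact sets (squeezing $\Omega_n$ between dilates of $\Omega$), not from the claimed homogeneity. Second, and more seriously, compatibility condition (i) for $\Omega_n$ --- each orbit $\{t^A\xi:t>0\}$ meets $\partial\Omega_n$ exactly once --- is \emph{not} ``immediate from convexity and star-shapedness about $0$'': star-shapedness gives unique intersection with rays from the origin, but the orbits are not rays unless $A$ is scalar, and the parenthetical claim that $\rho_n$ is ``well-defined once $\Omega_n$ is convex and star-shaped'' is precisely what needs to be proved. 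The paper devotes the last third of its proof to this point, using the transversality bound for $\Omega_n$ together with the fact that $\partial\Omega_n$ lies in a thin $\rho$-annular neighborhood of $\partial\Omega$ (so any orbit can only meet $\partial\Omega_n$ when $t$ is close to its $\partial\Omega$-crossing time) to conclude that once the orbit exits $\Omega_n$ it never re-crosses $\partial\Omega_n$. Your cone-averaging argument gives the needed transversality, but you still need to carry out this final step rather than wave it away.
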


\begin{proof}
We adopt the same approach as in \cite{sz}, namely, approximating $\Omega$ by convex polygons and smoothing out the vertices. For each $n$, let $P_n$ be the polygon with vertices $\{v_1, \ldots, v_n\}$, where $v_i$ is the unique point on $\partial\Omega$ making an angle of $2\pi i/n$ with the $\xi_2$-axis. Then $P_n$ is convex and $P_n\subset\Omega$. Choose intervals $I_n=[x_{n, 0}, x_{n, 1}]\subset\tilde{I}_n=(\tilde{x}_{n, 0}, \tilde{x}_{n, 1})\subset\mathbb{R}$ centered at $0$ such that $\partial P_n\cap\{(\xi_1, \xi_2):\,\xi_1\in I_n, \xi_2>0\}$ can be parametrized as $\{(\alpha, \tilde{\gamma}_n(\alpha)):\,\alpha\in I_n\}$, and also so that $\{(\alpha, \tilde{\gamma}_n(\alpha):\,\alpha\in\tilde{I}_n\}$ does not contain any vertices of $P_n$ except $v_1$. 
\\
\indent
Now let $\eta\in C_0^{\infty}(\mathbb{R})$ be an even nonnegative function supported in $(-1/2, 1/2)$ so that $\int\eta(t)\,dt=1$. Let $C_n=100\max\{(x_{n, 0}-\tilde{x}_{n, 0})^{-1}, (\tilde{x}_{n, 1}-x_{n, 1})^{-1}\}$, and set
\begin{align*}
\gamma_n(\alpha)=\int C_n\,\eta(C_nt)\tilde{\gamma}_n(\alpha-t)\,dt,\qquad\alpha\in I_n.
\end{align*}
By the choice of $C_n$, we have that $\{(\alpha, \gamma_n(\alpha)):\,\alpha\in I_n\}$ coincides with $P_n$ near the endpoints of $I_n$. We may thus obtain a smooth convex curve $\partial\Omega_n$ by replacing $\partial P_n$ near $v_1$ with $\{(\alpha, \gamma_n(\alpha)):\,\alpha\in I_n\}$, and then repeating the same procedure near the other vertices $v_2, \ldots, v_n$ after performing appropriate rotations.
\newline
\indent
It is clear that $\{\Omega_n\}$ satisfies $(1), (3)$, and $(4)$, so it remains to show $(2)$. Let $\epsilon_0>0$ be sufficiently small so that for any $\xi\in\partial\Omega$ and $s_1, s_2\in [1-\epsilon_0, 1+\epsilon_0]$, the difference in slope between the tangent lines to the orbit $\{t^A\xi:\,t>0\}$ at $s_1^A\xi$ and the tangent line at $s_2^A\xi$ is less than $\Theta(\Omega, A)/10$. Now choose $0<\epsilon_1<\epsilon_0$ sufficiently small so that 
\begin{multline}\label{intersect}
\{t^A\xi:\,t>0, t\notin[1-\epsilon_0,1+\epsilon_0], \xi\in\partial\Omega\}
\\
\cap\{t^A\xi:\,t\in[1-\epsilon_1, 1+\epsilon_1], \xi\in\partial\Omega\}=\emptyset.
\end{multline}
Next, choose $N>0$ sufficiently large so that whenever $n\ge N$, the following holds:
\begin{enumerate}
\item $\partial\Omega_n\subset\{\xi:\,1-\epsilon_1\le\rho(\xi)\le 1+\epsilon_1\}$,
\item The difference in slope between the tangent line at any point $x\in\partial\Omega_n$ and some supporting line of $\partial\Omega$ at the vertex of $P_n$ nearest to $x$ is less than $\Theta(\Omega, A)/10$,
\item For any $\xi\in\partial\Omega_n$, the difference in slope between the tangent vector to the orbit $\{t^A\xi:\,t>0\}$ at $\xi$ and the tangent vector to the orbit $\{t^Av_i\}$ at $v_i$, where $v_i$ is the vertex of $P_n$ nearest to $\xi$, is less than $\Theta(\Omega, A)/10$.
\end{enumerate}
To see that we may choose $N$ so that $(1)$ and $(3)$ are satisfied is fairly obvious, and to see that we may choose $N$ so that $(2)$ holds requires only a straightforward application of (2) from Lemma \ref{elemlemma}. It is easy to see that $(2)$ and $(3)$ imply that $\Theta(\Omega_n, A)>\Theta(\Omega, A)/2$. $(1)$ and (\ref{intersect}) imply that $\{t^A\xi:\,t>0, t\notin[1-\epsilon_0,1+\epsilon_0], \xi\in\partial\Omega\}$ does not intersect $\partial\Omega_n$. Given $\xi\in\partial\Omega$, let $t(\xi)>0$ be the smallest value of $t$ such that $t^{-A}\xi\in\partial\Omega_n$. Then $t(\xi)\in [1-\epsilon_0, 1+\epsilon_0]$. But by the choice of $\epsilon_0$, any tangent line to $\{t^A\xi: t\in [1-\epsilon_0, 1+\epsilon_0]\}$ makes an angle of at least $\Theta(\Omega, A)/4$ with the tangent line to $\partial\Omega_n$ at $t^{-A}\xi$, and by convexity of $\partial\Omega_n$ there can be no $t>t(\xi)$ such that $t^{-A}\xi\in\partial\Omega_n$. Thus $(\Omega_n, A)$ is a compatible pair for $n\ge N$.

\end{proof}

\begin{lemma}\label{intlemma}
Suppose that Proposition \ref{mainprop} holds in the special case when $\partial\Omega$ is smooth, with a constant depending only on $M$, $\epsilon$, $\text{Re}(\lambda_1), \text{Re}(\lambda_2)$, and $\Theta(\Omega, A)$. Then Proposition \ref{mainprop} holds in the full stated generality.
\end{lemma}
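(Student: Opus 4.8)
The plan is a soft approximation argument built on the smooth domains $\Omega_n$ furnished by Lemma \ref{approxlemma}. Fix $\delta$, $\epsilon$, the cutoff $\Phi$ and $f\in\mathcal S(\mathbb R^2)$, write $\phi(\xi)=\Phi((\xi-1)/\delta)$ as in Proposition \ref{mainprop}, and set $\psi^n_t=\mathcal F(\phi(\rho_n(\cdot)/t))$, where $\rho_n$ is the norm function of $(\Omega_n, A)$ (so that $\psi_t=\mathcal F(\phi(\rho(\cdot)/t))$ is the $n\to\infty$ object). By Lemma \ref{approxlemma}, for $n$ large $(\Omega_n, A)$ is a compatible pair with $\partial\Omega_n$ smooth, $\Theta(\Omega_n, A)\ge\Theta(\Omega, A)/2$, and $\{|\xi|\le 4\}\subset\Omega_n\subset\{|\xi|<2^{M+1}\}$; hence, up to a fixed dilation inside the group $\{t^A\}$ — which rescales $\rho_n$ by an $n$-independent constant and therefore, by the scaling invariance of $\int_0^\infty(\cdot)\,dt/t$, leaves the inequality of Proposition \ref{mainprop} unchanged — each $\Omega_n$ satisfies the normalization $(\ref{quantM})$ with $M$ replaced by a fixed $M'$ comparable to $M$. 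Thus every parameter on which the smooth-case constant depends is bounded uniformly in $n$, and the hypothesis supplies a constant $C_\epsilon$, independent of $n$, such that for all sufficiently large $n$,
\begin{align*}
\Big\|\Big(\int_0^\infty|\psi^n_t\ast f|^2\,\tfrac{dt}{t}\Big)^{1/2}\Big\|_4\le C_\epsilon\,\delta^{1/2-\epsilon}\,\|f\|_4.
\end{align*}

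The second step is to check pointwise convergence $\psi^n_t\ast f(x)\to\psi_t\ast f(x)$ for each fixed $x\in\mathbb R^2$ and $t>0$. Up to a constant, $\psi^n_t\ast f(x)=\int\phi(\rho_n(\xi)/t)\,\widehat f(\xi)\,e^{i\langle\xi,x\rangle}\,d\xi$; since $\rho_n(\xi)\approx|\xi|$ with constants depending only on $M$ (a direct consequence of $\{|\xi|\le 4\}\subset\Omega_n\subset\{|\xi|<2^{M+1}\}$) and $\operatorname{supp}\phi\subset[1-\delta,1+\delta]$, the integrand and its formal limit $\phi(\rho(\xi)/t)\widehat f(\xi)e^{i\langle\xi,x\rangle}$ are all supported in one fixed compact annulus $K=K(t,\delta,M)$, independent of $n$. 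On $K$, Lemma \ref{approxlemma}(4) gives $\rho_n\to\rho$ uniformly, and since $\Phi$ is uniformly continuous, $\phi(\rho_n(\cdot)/t)\to\phi(\rho(\cdot)/t)$ uniformly on $K$; as $\widehat f\in L^1(K)$, dominated convergence yields $\psi^n_t\ast f(x)\to\psi_t\ast f(x)$ (in fact uniformly in $x$ for each fixed $t$). In particular $\psi_t\ast f$ is a perfectly well-defined function for every $t$.

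Finally I would apply Fatou's lemma twice. From $|\psi^n_t\ast f(x)|^2\to|\psi_t\ast f(x)|^2$ for every $(x,t)$, Fatou in the measure $dt/t$ gives $\int_0^\infty|\psi_t\ast f(x)|^2\,\tfrac{dt}{t}\le\liminf_n\int_0^\infty|\psi^n_t\ast f(x)|^2\,\tfrac{dt}{t}$; squaring (which commutes with $\liminf$ on $[0,\infty)$) and then applying Fatou in $x$ gives
\begin{align*}
\int_{\mathbb R^2}\Big(\int_0^\infty|\psi_t\ast f|^2\,\tfrac{dt}{t}\Big)^2dx\le\liminf_n\int_{\mathbb R^2}\Big(\int_0^\infty|\psi^n_t\ast f|^2\,\tfrac{dt}{t}\Big)^2dx\le\big(C_\epsilon\,\delta^{1/2-\epsilon}\|f\|_4\big)^4,
\end{align*}
and taking fourth roots yields Proposition \ref{mainprop} for $(\Omega, A)$ with constant depending only on $M$, $\epsilon$, $\text{Re}(\lambda_1)$, $\text{Re}(\lambda_2)$, $\Theta(\Omega, A)$. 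I expect no real obstacle here: the passage to the limit is entirely soft, and the only point that genuinely uses Lemma \ref{approxlemma} is the uniformity in $n$ of the smooth-case constant — i.e. the uniform control of $M$, $\text{Re}(\lambda_i)$ and $\Theta$ for the approximants — together with the uniform localization $\rho_n\approx|\xi|$ used in the convergence step. All the substantive work lives in the smooth case of Proposition \ref{mainprop} and in the construction of the $\Omega_n$.
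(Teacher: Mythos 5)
Your proposal is correct and follows essentially the same route as the paper: invoke Lemma \ref{approxlemma} to get smooth approximants $\Omega_n$ with uniformly controlled parameters, use the uniform convergence $\rho_n\to\rho$ on compact sets to get pointwise convergence of $\psi^n_t\ast f(x)$ to $\psi_t\ast f(x)$, and finish by applying Fatou's lemma twice. Your added care about the normalization constants (the $M$ vs.\ $M+1$ discrepancy, handled by a fixed rescaling) is a reasonable touch that the paper glosses over but does not change the argument.
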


\begin{proof}[Proof of Lemma \ref{intlemma}]
Let $\{\Omega_n\}$ be a sequence of convex domains approximating $\Omega$ as in Lemma \ref{approxlemma}, and suppose the statement of Proposition \ref{mainprop} holds in the special case of convex domains with smooth boundaries, with a constant depending only on the quantities listed in Lemma \ref{intlemma}. Fix a Schwartz function $f\in\mathcal{S}(\mathbb{R}^2)$. Then for every $\epsilon>0$ and every $0<\delta<C$, for $n$ sufficiently large we have
\begin{align*}
\Norm{\bigg(\int_0^{\infty}|\psi_{n, t}\ast f(x)|^2\frac{\,dt}{t}\bigg)^{1/2}}_4\le C_{\epsilon, M, \text{Re}(\lambda_1), \text{Re}(\lambda_2), \Theta(\Omega, A)}\delta^{1/2-\epsilon}\Norm{f}_4,
\end{align*}
where 
\begin{align*}
\psi_{n, t}(x)=\mathcal{F}(\phi(\frac{\rho_n(\cdot)}{t}))(x),\qquad \phi(\xi)=\Phi(\frac{\xi-1}{\delta}).
\end{align*}
Since $\phi(\frac{\rho_n(\cdot)}{t})\to\phi(\frac{\rho(\cdot)}{t})$ uniformly as $n\to\infty$, we have that $\psi_{n, t}\ast f(x)\to\psi_t\ast f(x)$ pointwise as $n\to\infty$. By Fatou's lemma applied twice,
\begin{align*}
\Norm{\bigg(\int_0^{\infty}|\psi_{t}\ast f(x)|^2\frac{\,dt}{t}\bigg)^{1/2}}_4\le\liminf_n\Norm{\bigg(\int_0^{\infty}|\psi_{n, t}\ast f(x)|^2\frac{\,dt}{t}\bigg)^{1/2}}_4
\\
\le C_{\epsilon, M, \text{Re}(\lambda_1), \text{Re}(\lambda_2), \Theta(\Omega, A)}\delta^{1/2-\epsilon}\Norm{f}_4,
\end{align*}
as desired.
\end{proof}

\section{An $L^2$ maximal function estimate}
In \cite{cor2}, C\'{o}rdoba proved $L^2$ bounds for the Nikodym maximal function in $\mathbb{R}^2$. These bounds were an important ingredient in \cite{car2} to prove Proposition \ref{mainprop} for the special case of the classical (radial) Bochner-Riesz means. To prove Proposition \ref{mainprop} in the full stated generality, we need a nonisotropic version of C\'{o}rdoba's result. To this end, we will closely follow \cite{cor2} to prove the following proposition.

\begin{proposition}\label{maxprop} Let $N, \lambda>0$ be real numbers, and let $\mathcal{C}$ be the collection of all rectangles in $\mathbb{R}^2$ with dimensions $\lambda$ and $N\lambda$. Let 
\begin{align*}
{\mathcal{C}_k}=\{(2^k)^AR:\,R\in\mathcal{C}, k\in\mathbb{Z}\}.
\end{align*}
Define a maximal operator $M_{\lambda, N}$ by
\begin{align*}{M}_{\lambda, N}f(x)=\sup_{x\in R\in\bigcup_k\mathcal{C}_k}\frac{1}{|R|}\int_R |f(y)|\,dy.
\end{align*} 
Then there is a constant $\beta(\text{Re}(\lambda_1), \text{Re}(\lambda_2))>0$ such that for every Schwartz function $f\in \mathcal{S}(\mathbb{R}^2)$, 
\begin{align*}
\Norm{{M}_{\lambda, N}f}_2\lesssim_{\text{Re}(\lambda_1), \text{Re}(\lambda_2)}\log(N)^{\beta(\text{Re}(\lambda_1), \text{Re}(\lambda_2))}\Norm{f}_2.
\end{align*}

\end{proposition}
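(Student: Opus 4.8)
The plan is to follow Córdoba's classical argument for the Nikodym maximal function, transplanting it to the nonisotropic setting via a change of variables. First I would observe that it suffices to bound the linearized/discretized operator: fix for each dyadic scale a finite collection of rectangles in $\mathcal{C}_k$ with bounded overlap that realizes (up to constants) the supremum, and by a standard limiting argument reduce to proving $\Norm{\sum_j \chi_{R_j}}_2 \lesssim \log(N)^\beta (\sum_j |R_j|)^{1/2}$, or equivalently the dual/bilinear form estimate $\sum_{j,k} |R_j \cap R_k| \lesssim \log(N)^\beta \sum_j |R_j|$ after the usual passage through an $L^2$ duality argument and $TT^*$. The key geometric input is the \emph{two rectangles lemma}: if $R$ and $R'$ are rectangles of dimensions $\lambda \times N\lambda$, then $|R \cap R'| \lesssim \frac{\lambda}{\lambda + N|\sin\theta|}|R|$ where $\theta$ is the angle between their long axes — a rotated, rescaled rectangle pair intersects in controlled measure.

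The nonisotropic twist is that the rectangles now come in the dilated families $(2^k)^A R$. The main step I would carry out is to understand how the shape of $(2^k)^A R$ depends on $k$. Writing $A$ in Jordan form, $t^A$ is, up to a bounded error and logarithmic factors, a near-isotropic scaling by $t^{\text{Re}(\lambda_1)}$ and $t^{\text{Re}(\lambda_2)}$ along (slowly rotating, in the complex-eigenvalue case) axes, with polynomial-in-$\log t$ corrections from any nilpotent part. The crucial point is that a rectangle of dimensions $\lambda \times N\lambda$, pushed forward by $(2^k)^A$, becomes (after rescaling back to unit area density) a rectangle whose \emph{eccentricity} is still comparable to $N$ up to a factor $\log(N)^{C}$, uniformly in $k$, because the two side-lengths get multiplied by quantities that are comparable up to such a factor. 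Thus the whole union $\bigcup_k \mathcal{C}_k$ is, after a bounded-overlap decomposition into scales, contained in a bounded number of families of genuine rectangles of eccentricity $\lesssim N\log(N)^C$, and Córdoba's estimate applies to each with the loss $\log(N\log(N)^C)^{\beta_0} \lesssim \log(N)^{\beta_0}$; summing over the $O(\log N)$ overlapping scale-classes that can interact gives a further power of $\log N$. Collecting these logarithmic losses fixes $\beta(\text{Re}(\lambda_1),\text{Re}(\lambda_2))$.

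The main obstacle I anticipate is precisely the bookkeeping of the scales: in the isotropic case, $2^k R$ is simply a rescaled copy, and different scales are handled by a trivial Littlewood–Paley / dyadic pigeonholing. Here $(2^k)^A$ distorts rectangles anisotropically, so a rectangle at scale $k$ and one at scale $k'$ may have wildly different sizes, and one must verify that the bilinear sum $\sum |R_j \cap R_{j'}|$ over \emph{all} pairs — not just pairs at comparable scales — is still controlled. The resolution is that two such rectangles of vastly different sizes intersect in measure at most that of the smaller one, and the smaller one's contribution is absorbed into its own $|R_j|$ term after summing the geometric series in the scale difference; the complex-eigenvalue case, where $t^A$ also rotates, requires checking that this rotation rate is $O(\log t)$ so that within any window of $\log N$ consecutive scales the angular spreading is under control. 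Once this scale analysis is in place, the remainder is a direct adaptation of \cite{cor2}.
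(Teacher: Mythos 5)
Your key geometric claim---that $(2^k)^A R$ has eccentricity $\approx N$ up to a $\log(N)^C$ factor, uniformly in $k$---is false, and it is the load-bearing step of your proposal. Take $A=\operatorname{diag}(1,2)$ and $R$ a coordinate-aligned $\lambda\times N\lambda$ rectangle with long axis along $e_1$. Then $(2^k)^A R$ has sidelengths $2^k N\lambda$ and $2^{2k}\lambda$, so its eccentricity is $2^{-k}N$ or $2^k N^{-1}$, which ranges over all of $(0,\infty)$ as $k$ runs over $\mathbb{Z}$; no rescaling restores eccentricity $\approx N$, since eccentricity is scale invariant. (For a rectangle not aligned with the eigendirections, the image under $(2^k)^A$ is not even a rectangle but a parallelogram with $k$-dependent angle, so ``families of genuine rectangles of bounded eccentricity'' is doubly off.) Consequently the union $\bigcup_k \mathcal{C}_k$ cannot be covered by a bounded number of C\'ordoba-type families, and the plan of applying the two-rectangles lemma across a bounded collection of eccentricity-$N$ families collapses. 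Note that the single-scale bound does not need any eccentricity control at all: $M_{\lambda,N,k}$ is conjugate to $M_{\lambda,N,0}$ by the measure-preserving-up-to-Jacobian linear map $(2^k)^A$, so $\|M_{\lambda,N,k}\|_{L^2\to L^2}\lesssim\sqrt{\log N}$ follows from C\'ordoba's theorem \emph{verbatim}, regardless of what the images look like; this is exactly what the paper uses.

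The real difficulty, which your write-up flags but does not resolve, is combining the single-scale bounds across $k\in\mathbb{Z}$, and the paper does this by a covering argument rather than a bilinear overlap sum. For each direction and scale it extracts a Vitali-type subfamily $\mathcal{H}'$ of selected rectangles with the property that no one is contained in another's double dilate; then it groups the scales into bands $\Delta_j$ of width $(\log N)^a$, where $a$ is chosen so that $B(0,2)\subset (2^a)^A B(0,1)$. The point of that choice is precisely the scale-separation you were gesturing at: two rectangles in $\mathcal{H}'$ from bands separated by more than two cannot meet, because after rescaling by $(2^{-k})^A$ the larger rectangle sits inside a ball of radius $N\lambda$, while the much smaller rectangle's double dilate would already swallow that ball, contradicting the Vitali condition. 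Thus the band-unions $A_j$ are almost disjoint, the restricted maximal operator over any one band has $L^2$ norm $\lesssim(\log N)^{a+1/2}$ by summing at most $(\log N)^a$ single-scale bounds, and these contributions add in $L^2$ by almost-disjointness. This gives a weak-type $(2,2)$ estimate, which is then upgraded to a strong $L^2$ bound by interpolating against the easy weak-$(1,1)$ bound (comparison with Hardy--Littlewood) and the trivial $L^\infty$ bound---an interpolation step absent from your proposal. If you want to pursue the bilinear route, you would have to replace the eccentricity claim by the change-of-variables observation for the within-scale overlaps and supply a quantitative cross-scale overlap estimate that effects the same geometric exclusion; as written, the argument has a gap at its central step.
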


\begin{proof}
In what follows, for any rectangle $R$ and any integer $k$, we will let $((2^k)^AR)^{\ast}:=(2^k)^A(R^{\ast})$. Here $R^{\ast}$ denotes the double dilate of $R$, where the dilation is taken from the center of $R$. Similarly, if $\mathcal{R}$ denotes any collection of nonisotropic dilates of rectangles, then $\mathcal{R}^{\ast}:=\{R^{\ast}:\,R\in\mathcal{R}\}$.
\newline
\indent For each $k\in\mathbb{Z}$, define a maximal operator $M_{\lambda, N, k}$ on Schwartz functions $f\in\mathcal{S}(\mathbb{R}^2)$ by
\begin{align}\label{kmax}
M_{\lambda, N, k}f(x)=\sup_{x\in R\in{\mathcal{C}_k}}\frac{1}{|R|}\int_R |f(y)|\,dy.
\end{align}
It follows from rescaling the corresponding result from \cite{cor2} that for every $f\in\mathcal{S}(\mathbb{R}^2)$,
\begin{align}\label{kmax1}
\Norm{M_{\lambda, N, k}f}_2\lesssim\log(3N)^{1/2}\Norm{f}_2.
\end{align}  
Now we combine the estimates for the $M_k$ to prove an $L^2$ estimate for $M$. For each $(i, k)$ where $1\le i\le N$ and $k\in\mathbb{Z}$, define a maximal operator $T^{i, k}$ by
\begin{align*}
T^{i, k}f(x)=\sup_{(2^{-k})^Ax\in R\in\mathcal{R}_i}\frac{1}{|(2^k)^AR|}\int_{(2^k)^AR}|f(y)|\,dy
\end{align*}
where $\mathcal{R}_i$ denotes the collection of all rectangles with direction $\pi iN^{-1}$ and dimensions $\lambda\times N\lambda$. Fix a Schwartz function $f\in\mathcal{S}(\mathbb{R}^2)$, and apply a standard covering lemma to obtain for each $(i, k)$ a sequence of rectangles $\{R^{i, k}_n\}\subset\mathcal{R}_i$ pairwise disjoint such that
\begin{align*}
E_{\alpha}^{i, k}=\{x: T^{i, k}f(x)>\alpha\}\subset \bigcup_n ((2^k)^A(R_n^{i, k})^{\ast}),
\end{align*}
\begin{align*}
\frac{1}{|(2^k)^AR^{i, k}_n|}\int_{(2^k)^AR^{i, k}_n}|f(y)|\,dy>\alpha.
\end{align*}
Then
\begin{align*}
E_{\alpha}=\{x: M_{\lambda, N}f(x)>4\alpha\}\subset\bigcup_{i, k}E_{\alpha}^{i, k}.
\end{align*}
Let 
\begin{align*}\mathcal{H}=\bigcup_{i, k, n}(2^k)^AR_{n}^{i, k}.
\end{align*}
Let $\mathcal{H}^{\prime}$ be a subcollection of $\mathcal{H}$ such that 
\begin{enumerate}
\item There are no $R, R^{\prime}\in\mathcal{H}^{\prime}$ such that $R^{\prime}\subset R^{\ast}$.
\item If $R\in\mathcal{H}\setminus\mathcal{H}^{\prime}$, then there is $R^{\prime}\in\mathcal{H}^{\prime}$ such that $R\subset (R^{\prime})^{\ast}$. 
\end{enumerate}
(To see that such a subcollection exists, we simply enumerate the rectangles in $\mathcal{H}$ as $R_1, R_2, \ldots$, and at step $i$ we add $R_i$ to $\mathcal{H}^{\prime}$ if $R_i$ is not contained in $R_j^{\ast}$ for any $j<i$ such that $R_j\in\mathcal{H}^{\prime}$, and in this case if $R_j\subset R_i^{\ast}$ for any $j<i$ such that $R_j\in\mathcal{H}^{\prime}$, we remove $R_j$ from $\mathcal{H}^{\prime}$.) Then 
\begin{align}\label{mf1}
E_{\alpha}\subset\bigcup_{R\in\mathcal{H}^{\prime}}R^{\ast\ast}.
\end{align} 
Let $\mathcal{H}_k=\mathcal{H}^{\prime}\cap\mathcal{C}_k$. Fix an integer $a>0$ such that $B(0, 2)\subset(2^a)^AB(0, 1)$, where $B(0, r)$ denotes the (Euclidean) ball of radius $r$ centered at the origin. Let $n_0=\max\{k:\,\mathcal{H}_k\ne\emptyset\}.$ For every $j\ge 0$, let 
\begin{align*}
\Delta_j=\bigcup_{\substack{n_0-(j+1)(\log N)^a\\< k\le n_0-j(\log N)^a}}\mathcal{H}_k.
\end{align*}
For each $j$ let $A_j=\bigcup_{R\in\Delta_j}R$. Then the family of sets $\{A_j\}$ is ``almost disjoint", i.e. $A_{j_1}\cap A_{j_2}=\emptyset$ if $|j_1-j_2|>2$. To see this, suppose that $R\in\Delta_{j_1}$ and $R^{\prime}\in \Delta_{j_2}$ with $j_1<j_2-2$ and $R\cap R^{\prime}\ne\emptyset$. Choose $k$ such that $R\in\mathcal{C}_k$. Then $(2^{-k})^AR\subset B(x, N\lambda)$ for some $x\in (2^{-k})^AR^{\prime}$. But $((2^{-k})^AR^{\prime})^{\ast}\supset B(x, N\lambda)$, and so $R\subset R^{\prime\ast}$, a contradiction.
\newline\indent
Now, by (\ref{mf1}) we have 
\begin{align}\label{mf2}
E_{\alpha}\subset\bigcup_jA_j^{\ast\ast}.
\end{align} 
Let $f_j=f\cdot\chi_{A_j}$.
Define a maximal function $S_j$ for $g\in\mathcal{S}(\mathbb{R}^2)$ by
\begin{align*}
S_jg(x)=\sup_{x\in R\in\bigcup_{\substack{n_0+2-(j+1)(\log N)^a\\<k\le n_0+2-j(\log N)^a}}\mathcal{C}_k}\frac{1}{|R|}\int_{R}g(y)\,dy.
\end{align*}
It follows from (\ref{kmax1}) that $S_j$ is bounded on $L^2(\mathbb{R}^2)$ with operator norm $\lesssim(\log N)^{a+1/2}$. Now if $x\in A_j^{\ast\ast}$, then there is $R\in\Delta_j$ such that $x\in R^{\ast\ast}$. Then,
\begin{align*}
S_jf_j(x)\ge\frac{1}{|R^{\ast\ast}|}\int_{R^{\ast\ast}}|f_j(y)|\,dy\ge\frac{1}{16}\frac{1}{|R|}\int_R|f_j(y)|\,dy\ge\frac{1}{16}\alpha.
\end{align*}
Thus $A_j^{\ast\ast}\subset\{x:\,S_jf_j(x)\ge\frac{1}{16}\alpha\}$, and so 
\begin{align*}
|A_j^{\ast\ast}|\lesssim(\log N)^{2a+1}\frac{\Norm{f_j}_2^2}{\alpha^2}.
\end{align*}
It follows that
\begin{align}\label{weak2}
|E_{\alpha}|\le\sum_j|A_j^{\ast\ast}|\lesssim(\log N)^{2a+1}\frac{1}{\alpha^2}\sum_j\Norm{f_j}_2^2\lesssim(\log N)^{2a+1}\frac{\Norm{f}_2^2}{\alpha^2}.
\end{align}
To obtain a strong type $L^2$ estimate for $M_{\lambda, N}$ from (\ref{weak2}), we will need to first prove a weak $(1, 1)$ estimate for $M_{\lambda, N}$ and interpolate. By comparison with the Hardy-Littlewood maximal function and rescaling, we have for every $k$,
\begin{align}\label{weak}
|\{x: M_{\lambda, N, k}(f)(x)>\alpha\}|\lesssim N\frac{\Norm{f}_1}{\alpha}.
\end{align}
We now repeat the above argument, using (\ref{weak}) in place of (\ref{kmax1}) and obtain the weak $(1, 1)$ estimate
\begin{align}\label{weak1}
|\{x: M_{\lambda, N}f(x)>4\alpha\}\lesssim N\frac{\Norm{f}_1}{\alpha}.
\end{align}
The result now follows by interpolation of (\ref{weak1}), (\ref{weak2}) and the trivial $L^{\infty}$ estimate for $M_{\lambda, N}$.
\end{proof}

\section{A decomposition of $\mathbb{R}^2$}

In this section, we will introduce a decomposition of $\mathbb{R}^2$ that plays a similar role as the decomposition of $\mathbb{R}^2$ provided in \cite{car2}. The decomposition from \cite{car2} can be viewed more or less as a decomposition of the annulus $|\xi-1|\le\delta$ into $\delta$-thickened caps that can be approximated by $\delta^{1/2}\times\delta$ rectangles, and dilated at different scales to cover the plane in an almost-disjoint fashion. Here we employ a different decomposition of the set $|\rho(\xi)-1|\le \delta$ into rectangles of width $\delta$ and length essentially between $\delta$ and $1$, so that on each rectangle, $\partial\Omega$ may be viewed as sufficiently ``flat" at scale $\delta$. This decomposition was introduced by \cite{sz} to prove $L^p$ bounds for Bochner-Riesz multipliers associated to convex domains. We then dilate these rectangles nonisotropically at different scales to cover the plane in an almost-disjoint fashion.
\subsection*{Decomposition of $\partial\Omega$} Before we describe the decomposition of $\mathbb{R}^2$, we first need to introduce a decomposition of $\partial\Omega$ from \cite{sz}. This decomposition allows us to write $\partial\Omega$ as a disjoint union of pieces on which $\partial\Omega$ is sufficiently ``flat". Here, the pieces in the decomposition will play the role that the $\delta^{1/2}$-caps play in the radial case. 
\newline
\indent
We inductively define a finite sequence of increasing numbers
\begin{align*}
\mathfrak{A}(\delta)=\{a_0, \ldots, a_Q\}
\end{align*}
as follows. Let $a_0=-1$, and suppose $a_0, \ldots, a_{l-1}$ are already defined. If
\begin{align}\label{case1}
(t-a_{l-1})(\gamma_L^{\prime}(t)-\gamma_R^{\prime}(a_{l-1}))\le\delta\text{ for all }t\in (a_{l-1}, 1])
\end{align}
and $a_{l-1}\le 1-2^{-M}\delta$, then let $a_l=1$. If (\ref{case1}) holds and $a_{l-1}>1-2^{-M}\delta$, then let $a_l=a_{l-1}+2^{-M}\delta$. If (\ref{case1}) does not hold, define
\begin{align*}
a_l=\inf\{t\in (a_{l-1}, 1]:\,(t-a_{l-1})(\gamma_L^{\prime}(t)-\gamma_R^{\prime}(a_{l-1}))>\delta\}.
\end{align*}
Now note that (\ref{case1}) must occur after a finite number of steps, since we have $|\gamma_L^{\prime}|, |\gamma_R^{\prime}|\le 2^{M-1}$, which implies that $|t-s||\gamma_L^{\prime}(t)-\gamma_R^{\prime}(s)|<\delta$ if $|t-s|<\delta 2^{-M}$. Therefore this process must end at some finite stage $l=Q$, and so it gives a sequence $a_0<a_1<\cdots<a_Q$ so that for $l=0, \ldots, Q-1$
\begin{align}\label{left} 
(a_{l+1}-a_l)(\gamma_L^{\prime}(a_{l+1})-\gamma_R^{\prime}(a_l))\le\delta,
\end{align}
and for $0\le j<Q-1$,
\begin{align}\label{right}
(t-a_l)(\gamma_L^{\prime}(t)-\gamma_R^{\prime}(a_l))>\delta\qquad\text{if }t>a_{l+1}.
\end{align}
For a given $\delta>0$, this gives a decomposition of 
\begin{align*}
\partial\Omega\cap\{x:\,-1\le x_1\le 1, x_2<0\}
\end{align*} 
into pieces
\begin{align*}
\bigsqcup_{l=0, 1, \ldots, Q-1}\{x\in\partial\Omega:\, x_1\in [a_l, a_{l+1}]\}.
\end{align*}
Now let $\{i_0, i_1, \ldots, a_{Q^{\prime}}\}$ be a refinement of $\{a_0, a_1, \ldots, a_Q\}$ corresponding to a partition of $[-1, 1]$ into intervals $\{I_j\}$ with $I_j=[i_j, b_{j+1}]$ such that each interval $[a_l, a_{l+1}]$ is a union of $\lesssim\log(\delta^{-1})$ of the intervals $I_j$, and so that $|I_{j}|/2\le |b_{j+1}|\le 2|I_j|$. We then have a decomposition
\begin{align*}
\partial\Omega\cap\{x:\,-1\le x_1\le 1, x_2<0\}=\bigsqcup_{j=0, 1, \ldots, Q^{\prime}}\{x\in\partial\Omega:\, x_1\in I_j\},
\end{align*}
where $Q^{\prime}\lesssim\log(\delta^{-1})Q$.

\subsection*{Decomposition of $\mathbb{R}^2$}
With the previous decomposition of $\partial\Omega$ in mind, we proceed to give a decomposition of $\mathbb{R}^2$. To begin, we define a \textit{nonisotropic sector} to be a region bounded by the origin and two orbits $\{t^A\xi:\,t>0\}$ and $\{t^A\xi^{\prime}:\,t>0\}$ for any $\xi, \xi^{\prime}\in\mathbb{R}^2\setminus\{0\}$. Observe there is an integer $N_M>0$ such that 
\begin{enumerate}
\item We can write $\mathbb{R}^2=\bigcup_{i=0}^{N_M}\mathcal{S}_i$, where each $\mathcal{S}_i$ is a nonisotropic sector and the $\mathcal{S}_i$ are essentially disjoint. 
\item For every $i$, there is a rotation $\mathcal{R}_i$ such that $\mathcal{R}_i(\partial\Omega\cap\mathcal{S}_i)\subset\{x:\,-1/2\le x_1\le 1/2\}$,  $\mathcal{R}_i(\partial\Omega\cap\mathcal{S}_j)\cap\{x:\,-1/2< x_1< 1/2\}=\emptyset$ for $i\ne j$, and $\mathcal{R}_0$ is the identity map. 
\end{enumerate}
For each $i$, define $\tilde{\mathcal{S}}_i$ to be the nonisotropic sector bounded by the orbits $\{t^A\xi_i:\,t>0\}$ and $\{t^A\xi_i^{\prime}:\,t>0\}$ where $\xi=(\xi_1, \xi_2)$ is the unique point in $\mathcal{R}_i\partial\Omega$ with $\xi_1=-1$ and $\xi_2>0$, and $\xi^{\prime}=(\xi_1^{\prime}, \xi_2^{\prime})$ is the unique point in $\mathcal{R}_i\partial\Omega$ with $\xi_1=1$ and $\xi_2>0$. Clearly, $\mathcal{S}_i\subset\tilde{\mathcal{S}}_i$. Let $\{(\alpha, \gamma_i(\alpha)):\,\alpha\in [-1, 1]\}$ be a parametrization of $\mathcal{R}_i(\partial\Omega\cap\tilde{\mathcal{S}}_i)$. For $0\le i\le N_M$, let $R_i$ denote the region of $\mathbb{R}^2$ bounded by the level sets $\{x:\,\rho(x)=1/2\}$ and $\{x:\,\rho(x)=2\}$ and the nonisotropic sector $\tilde{\mathcal{S}}_i$. Similarly, let $R_i^{\prime}$ denote the region of $\mathbb{R}^2$ bounded by the level sets $\{x:\,\rho(x)=1/4\}$ and $\{x:\,\rho(x)=4\}$ and the nonisotropic sector $\tilde{\mathcal{S}}_i$. Fix $\delta>0$. Let $R_{i, \delta}$ denote the region bounded by the level sets $\{x:\,\rho(x)=1-2\delta\}$ and $\{x:\,\rho(x)=1+2\delta\}$ and $\tilde{\mathcal{S}}_i$. Note that $\bigcup_{i=0}^{N_M}R_{i, \delta}$ contains the support of $\mathcal{F}[\psi_1]$, where $\psi_1$ is as in Proposition \ref{mainprop}.
\newline
\indent
Recall the previous decomposition of $[-1, 1]$ into intervals $\{I_j\}$. Let $B_{i, j, 0, 0}$ denote the region bounded by $R_{i, \delta}$ and the orbits $\{t^A\mathcal{R}_i^{-1}(i_{j}, 
\\\gamma_i(i_j)
):\,t>0\}$ and $\{t^A\mathcal{R}_i^{-1}(b_{j+1}, \gamma_i(b_{j+1})):\,t>0\}$, so that $R_{i, \delta}=\bigcup_j B_{i, j, 0, 0}$. For each integer $m$, let $B_{i, j, m, 0}=(\frac{1+2\delta}{1-2\delta})^{mA}B_{i, j, 0, 0}$. Now let $N_{\delta}, N_{\delta}^{\prime}$ be integers such that 
\begin{align*}
R_i\subset\bigcup_{j,   N_{\delta}\le m\le N_{\delta}^{\prime}}B_{i, j, m, 0}\subset R_i^{\prime}.
\end{align*}
We are now ready to state our decomposition of $\mathbb{R}^2$. For each integer $n$, let $B_{i, j, m, n}=(2^{2n})^A(B_{i, j, m, 0}).$ Then 
\begin{align*}
\mathcal{S}_i\subset\bigcup_{j, N_{\delta}\le m\le N_{\delta}^{\prime}, n\in\mathbb{Z}} B_{i, j, m, n},
\end{align*}
\begin{align*}
\mathbb{R}^2=\bigcup_{0\le i\le N_M, j, N_{\delta}\le m\le N_{\delta}^{\prime}, n\in\mathbb{Z}} B_{i, j, m, n},
\end{align*}
and there is an integer $N_M^{\prime}$ depending only on $M$ such that every point of $\mathbb{R}^2$ lies in at most $N_M^{\prime}$ many elements of the collection $\{B_{i, j, m, n}\}$.

\subsection*{Some important properties of the decomposition.}
We now prove some essential geometric facts regarding our decomposition; these may be viewed as analogs of Proposition $3$ parts $(i)-(iii)$ from \cite{car2}. The following proposition is a key fact regarding the almost disjointness of algebraic sums of the pieces in our decomposition.
\begin{proposition}\label{algdisj}
For a constant $C(M, \text{Re}(\lambda_1), \text{Re}(\lambda_2))>0$ depending only on $M$ and the eigenvalues of $A$, let
\begin{align*}
\mathcal{T}_0=\{\xi:\,1/4\le\rho(\xi)\le 4, |\xi_1|\le C(M, \text{Re}(\lambda_1), \text{Re}(\lambda_2))\},
\end{align*}
\begin{align*}
\mathcal{T}_1=\bigcup_{k\in\mathbb{Z}}(2^{4k})^A(\mathcal{T}_0).
\end{align*}
For $0<t<\infty$, let 
\begin{align*}
\mathcal{A}_t=\{B\in\{B_{i, j, m, n}\}:\exists\xi\in B\text{ with }\rho(\xi)=1/t
\\
\text{ and }\xi\in\mathcal{T}_1\}.
\end{align*}
Fix positive real numbers $u$ and $t$ satisfying $1/2<u/t<2$ with $u\in\bigcup_{k\in\mathbb{Z}}[2^{4k-1}, 2^{4k+1}]$, and let $\mathcal{B}_{u, t}$ denote the collection of all sets of the form $\{A+B\}_{A\in\mathcal{A}_t, B\in\mathcal{A}_u}$. Then if $C(M, \text{Re}(\lambda_1), \text{Re}(\lambda_2))$ is chosen sufficiently small, there exists a constant $C^{\prime}(M, \text{Re}(\lambda_1), \text{Re}(\lambda_2))>0$ (depending only on $M$ and the eigenvalues of $A$ and independent of $\delta$ and the choice of $u$ and $t$) such that every point of $\mathbb{R}^2$ is contained in at most 
\begin{align*}C^{\prime}(M, \text{Re}(\lambda_1), \text{Re}(\lambda_2))(\log(\delta^{-1}))^2
\end{align*} 
elements of $\mathcal{B}_{u, t}$.
\end{proposition}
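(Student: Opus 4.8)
The plan is to reduce the almost-disjointness count for sums $A+B$ with $A\in\mathcal{A}_t$, $B\in\mathcal{A}_u$ to a geometric statement about how the nonisotropic rectangles $B_{i,j,m,n}$ can overlap after translation, mimicking the corresponding estimate in \cite{car2} but accounting for the anisotropy and the varying lengths $|I_j|$. First I would record the basic shape data: after applying $\mathcal{R}_i$, each $B_{i,j,0,0}$ is comparable to a rectangle of width $\approx\delta$ (the $\rho$-thickness) and length $\approx|I_j|$ pointing in the direction normal to $\partial\Omega$ over $I_j$, by the flatness property (\ref{left})--(\ref{right}) that defines $\mathfrak{A}(\delta)$ and its refinement; the key point from the Seeger--Ziesler construction is that on $I_j$ the curve $\gamma_i$ deviates from its tangent line by $\lesssim\delta$, so $B_{i,j,0,0}$ is contained in and contains (dilates of) genuine Euclidean rectangles. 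Then $B_{i,j,m,n}$ is the image of this under $(2^{2n})^A\bigl(\tfrac{1+2\delta}{1-2\delta}\bigr)^{mA}$, and since $\rho$ is homogeneous of degree one with respect to $t^A$, the constraint ``$\exists\xi\in B$ with $\rho(\xi)=1/t$ and $\xi\in\mathcal{T}_1$'' pins down, for each $(i,j)$, essentially one value of the combined dilation parameter $2^{2n}(\tfrac{1+2\delta}{1-2\delta})^m$ (up to a bounded multiplicative factor $\approx 1$ coming from the $2\delta$-thickness of $R_{i,\delta}$ and the width of $\mathcal{T}_0$); this is exactly where the hypothesis that $C(M,\Re\lambda_1,\Re\lambda_2)$ is small is used, together with $u\in\bigcup_k[2^{4k-1},2^{4k+1}]$, so that on $\mathcal{T}_1$ the orbits through $\mathcal{T}_0$ don't ``wrap around'' and each orbit meets the level set $\{\rho=1/t\}$ in a controlled way.

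Next I would count, for a fixed point $p\in\mathbb{R}^2$, how many pairs $(A,B)$ with $A\in\mathcal{A}_t$, $B\in\mathcal{A}_u$ can have $p\in A+B$. Because $1/2<u/t<2$ and $u$ is restricted to the indicated union of intervals, $t$ is comparable to $u$ and hence, given $t$, the admissible scales for both families are comparable; I would fix the sector index $i$ for $A$ and $i'$ for $B$ (finitely many choices, $\lesssim N_M^2$), and within each sector parametrize $A$ by $(j,m,n)$ with the scale parameter determined up to $O(1)$ as above, so that effectively $A$ ranges over $\approx$ one ``annular'' family of rectangles $\{B_{i,j,m(i,j,t),n(i,j,t)}\}_j$ indexed by $j$, and likewise for $B$. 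The number of such $j$ is $\lesssim\log(\delta^{-1})$ per decade, but the rectangles live at all scales $n$; however the $\mathcal{T}_1$-plus-$\rho=1/t$ constraint collapses the $n$-freedom, so the count of $A$'s meeting $\mathcal{A}_t$ is $\lesssim\log(\delta^{-1})\cdot(\text{number of relevant }m)$, and the number of relevant $m$ at a fixed scale is again $\lesssim 1$ — so $|\mathcal{A}_t|$ restricted to a bounded region is $\lesssim\log(\delta^{-1})$, and similarly $|\mathcal{A}_u|\lesssim\log(\delta^{-1})$. Then $p\in A+B$ forces $A$ to lie in $p-B$, and for fixed $B$ the translate $p-B$ is a single rectangle of the same dimensions; a rectangle of width $\approx\delta$ and length $\ell_B$ can meet at most $O(1)$ of the $\mathcal{A}_t$-rectangles of width $\approx\delta$ and length $\ell_A$ that point in a \emph{fixed} direction, and the directions are themselves separated (the $I_j$'s partition $[-1,1]$ and the normals over distinct $I_j$ differ by angle $\gtrsim|I_j|$ roughly), so summing over the $\lesssim\log(\delta^{-1})$ choices of $B$ and, for each, the $\lesssim\log(\delta^{-1})$ choices of $A$ that can possibly intersect $p-B$, one gets the bound $C'(M,\Re\lambda_1,\Re\lambda_2)(\log\delta^{-1})^2$.

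The step I expect to be the main obstacle is making the ``for each $(i,j)$ the dilation scale is determined up to $O(1)$'' claim quantitatively rigorous in the presence of genuinely nonisotropic $A$ with possibly non-real or repeated eigenvalues: one must show that along an orbit $\{s^A\xi:s>0\}$ with $\xi\in\mathcal{T}_0$, the function $s\mapsto\rho(s^A\xi)=s\rho(\xi)$ is of course exactly linear, but the \emph{Euclidean} geometry of how $(2^{2n})^A(\tfrac{1+2\delta}{1-2\delta})^{mA}B_{i,j,0,0}$ sits relative to $p-B$ is distorted by $t^A$, and one needs the bounds on $t^A$ for $t\approx 1$ (and the angle condition $\Theta(\Omega,A)>0$, which guarantees orbits are transverse to $\partial\Omega$) to conclude that the distorted rectangles are still essentially rectangles with the stated width/length/direction and that overlaps are $O(1)$. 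Concretely I would prove a lemma: there is $c=c(\Re\lambda_1,\Re\lambda_2)>0$ with $\|s^A\|,\|s^{-A}\|\le c$ for $s\in[1/8,8]$, use it to show $B_{i,j,m,n}$ and $B_{i,j,m',n'}$ both meeting $\{\rho=1/t\}\cap\mathcal{T}_1$ force $|2^{2n}(\tfrac{1+2\delta}{1-2\delta})^m-2^{2n'}(\tfrac{1+2\delta}{1-2\delta})^{m'}|\lesssim\delta\cdot 2^{2n}(\tfrac{1+2\delta}{1-2\delta})^m$, hence $n=n'$ and $|m-m'|\lesssim 1$, and then carry out the translation-overlap count in the (rotated, rescaled) coordinates where the rectangles are axis-parallel. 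Everything else is bookkeeping of the indices $i,i'$ (finitely many) and $j$ (logarithmically many), which produces precisely the two logarithmic factors.
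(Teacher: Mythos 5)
There is a genuine gap in the middle of your counting argument, and the missing ingredient is precisely the convexity mechanism that drives the paper's proof.

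The claim that ``$|\mathcal{A}_t|$ restricted to a bounded region is $\lesssim\log(\delta^{-1})$'' is false. The refinement of $\{a_0,\dots,a_Q\}$ into the intervals $\{I_j\}$ introduces a factor $\lesssim\log(\delta^{-1})$ \emph{per interval} $[a_l,a_{l+1}]$, not globally: the total number $Q'$ of intervals $I_j$ is $\lesssim Q\log(\delta^{-1})$, and $Q$ itself is a curve-dependent quantity that is $\approx\delta^{-1/2}$ for a circle and can reach $\approx\delta^{-1}$ near a corner. So $\mathcal{A}_t$ contains $\approx Q\log(\delta^{-1})\gg(\log\delta^{-1})^2$ rectangles in a bounded region, and the same is true of $\mathcal{A}_u$. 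For the same reason, ``$\lesssim\log(\delta^{-1})$ per decade'' is wrong: within a single dyadic range of $|I_j|$ there can be $\approx\delta^{-1/2}$ intervals $I_j$. Consequently ``sum over the $\lesssim\log(\delta^{-1})$ choices of $B$'' and the final $(\log\delta^{-1})^2$ count do not follow. Likewise, the assertion that $p-B$ (a $\delta\times\ell_B$ rectangle) meets only $O(1)$ $\mathcal{A}_t$-rectangles pointing in a given direction would need to be turned into a count over all directions; a naive length/width count gives $\lesssim\ell_B/\ell_A$ intersections, which is not under control, and the ``directions separated by $\gtrsim|I_j|$'' observation by itself does not cap this.

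What is missing is the convexity argument on which the paper's proof rests. The paper observes that a decomposition $x=v+w$ with $v,w$ on the two level curves is rigid: after fixing the pair $(v,w)$ that minimizes the difference $\sigma^{+}(v,w)$ between slopes of supporting lines to the convex curves $\Sigma_v$ and $\Sigma_w$, any other decomposition $x=\tilde v+\tilde w$ must satisfy $\tilde v_1\le v_1+C\delta$ and $\tilde w_1\ge w_1-C\delta$ by convexity, and since $\tilde v+\tilde w=v+w$ this forces $v_1-\tilde v_1=\tilde w_1-w_1\ge -C\delta$. The flatness and maximality properties (\ref{left})--(\ref{right}) of the $\mathfrak{A}(\delta)$-decomposition are then used to translate this $\delta$-scale constraint on $\tilde v_1$ into a bound $|j_1-j_0|\lesssim(\log\delta^{-1})^2$, because moving $\tilde v_1$ by enough to cross many $I_j$'s accumulates curvature, which in turn shifts $\tilde v_2+\tilde w_2$ away from $x_2$ by more than $O(\delta)$ — a contradiction. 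Once $j_1$ is pinned, $\tilde w=x-\tilde v$ is essentially determined and $j_1'$ has $O(1)$ choices by almost disjointness of the $B_{i,j,m,n}$. Your last paragraph correctly handles the pinning of $m$ and $n$ (the scale), but without the supporting-line/slope-monotonicity argument there is no mechanism forcing the $j$-indices into a window of size $(\log\delta^{-1})^2$, and the proposal does not obtain the stated bound. To repair it you would need to insert exactly this step: parametrize decompositions of $x$ by slopes of supporting lines, prove the one-sided $O(\delta)$ constraint on $\tilde v_1$ using the extremal $(v,w)$, and then invoke (\ref{left})--(\ref{right}) to bound the $j$-range.
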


\begin{proof}
Without loss of generality, assume that $u=1$. For any $v\in A\in\mathcal{A}_t$ and $w\in B\in\mathcal{A}_u$, let $\sigma^+(v, w)$ denote the minimum nonnegative difference in slope between supporting lines to the convex curve 
\begin{align*}
\Sigma_v:=\{\xi:\,\rho(\xi)=\rho(v),\,\,\xi\in\mathcal{T}_1\}
\end{align*}
at $v$ and supporting lines to the convex curve
\begin{align*}
\Sigma_w:=\{\xi:\,\rho(\xi)=\rho(w),\,\,\xi\in\mathcal{T}_1\}
\end{align*}
at $w$, and $\sigma^+(v, w):=+\infty$ if no nonnegative difference exists. Let $\sigma^-(v, w)$ denote the maximum nonpositive difference in slope between supporting lines to $\Sigma_v$ at $v$ and supporting lines to $\Sigma_w$ at $w$, and $\sigma^-(v, w):=-\infty$ if no nonpositive difference exists. Note that for every $(v, w)$ at least one of $\sigma^+(v, w)$ and $\sigma^-(v, w)$ is finite, and if $C(M, \text{Re}(\lambda_1), \text{Re}(\lambda_2))$ is sufficiently small, then the slope of any supporting line is between $-2^{2M}$ and $2^{2M}$. Given $x\in\mathcal{B}_{u, t}$, we have one of three cases:
\begin{enumerate}
\item There is $v\in A\in\mathcal{A}_t$ and $w\in B\in\mathcal{A}_u$ with $v+w=x$ and $\sigma^+(v, w)$ finite, but $\sigma^-(v, w)$ is infinite for every pair $(v^{\prime}, w^{\prime})$ with $v\in A^{\prime}\in\mathcal{A}_t$, $w\in B^{\prime}\in\mathcal{A}_u$, and $v^{\prime}+w^{\prime}=x$,
\item There is $v\in A\in\mathcal{A}_t$ and $w\in B\in\mathcal{A}_u$ with $v+w=x$ and $\sigma^-(v, w)$ finite, but $\sigma^+(v, w)$ is infinite for every pair $(v^{\prime}, w^{\prime})$ with $v\in A^{\prime}\in\mathcal{A}_t$, $w\in B^{\prime}\in\mathcal{A}_u$, and $v^{\prime}+w^{\prime}=x$,
\item There is $v\in A\in\mathcal{A}_t$ and $w\in B\in\mathcal{A}_u$ with $v+w=x$ and $\sigma^+(v, w)$ finite, and there is $v^{\prime}\in A^{\prime}\in\mathcal{A}_t$ and $w^{\prime}\in B^{\prime}\in\mathcal{A}_u$ with $v^{\prime}+w^{\prime}=x$ and $\sigma^-(v, w)$ finite.
\end{enumerate}

Let us assume we have case $1$. Given $x\in\mathbb{R}^2$, choose $v=(v_1, v_2)\in A\in\mathcal{A}_t$ and $w=(w_1, w_2)\in B\in\mathcal{A}_u$ with $v+w=x$ minimizing $\sigma^+(v, w)$. Now suppose there is $\tilde{v}=(\tilde{v}_1, \tilde{v}_2)\in\tilde{A}\in\mathcal{A}_t$ and $\tilde{w}=(\tilde{w}_1, \tilde{w}_2)\in\tilde{B}\in\mathcal{B}_t$ such that $\tilde{v}+\tilde{w}=x$. Since $\Sigma_v$ and $\Sigma_w$ are convex, we have 
\begin{align*}
\tilde{v}_1\le v_1+C^{\prime\prime}(M, \text{Re}(\lambda_1), \text{Re}(\lambda_2))\delta,
\end{align*}
\begin{align*}
\tilde{w}_1\ge w_1-C^{\prime\prime}(M, \text{Re}(\lambda_1), \text{Re}(\lambda_2))\delta,
\end{align*} 
where $C^{\prime\prime}(M, \text{Re}(\lambda_1), \text{Re}(\lambda_2))>0$ is a constant that depends only on $M$ and the eigenvalues of $A$. Thus 
\begin{align}\label{diffest}
v_1-\tilde{v}_1=\tilde{w}_1-w_1\ge -C^{\prime\prime}(M, \text{Re}(\lambda_1), \text{Re}(\lambda_2))\delta.
\end{align} 
Choose indices $i_0, j_0, m_0, n_0$ and $i_0^{\prime}, j_0^{\prime}, m_0^{\prime}, n_0^{\prime}$ such that $B_{i_0, j_0, m_0, n_0}\ni v$ and $B_{i_0^{\prime}, j_0^{\prime}, m_0^{\prime}, n_0^{\prime}}\ni w$. (There are $\lesssim_{M, \text{Re}(\lambda_1), \text{Re}(\lambda_2)}$ possible choices of indices.) Also choose indices $i_1, j_1, m_1$ and $i_1^{\prime}, j_1^{\prime}, m_1^{\prime}$ such that $B_{i_1, j_1, m_1, n_1}\ni \tilde{v}$ and $B_{i_1^{\prime}, j_1^{\prime}, m_1^{\prime}, n_1^{\prime}}\ni \tilde{w}$. Note that we must necessarily have $m_1=m_0$ and $m_1^{\prime}=m_0^{\prime}$, and also that $-10\le n_1, n_1^{\prime}\le 10$. We next observe that for some sufficiently large constant $C^{\prime\prime\prime}(M, \text{Re}(\lambda_1), \text{Re}(\lambda_2))$ we must have 
\begin{multline*}
j_0-C^{\prime\prime\prime}(M, \text{Re}(\lambda_1), \text{Re}(\lambda_2))\log(\delta^{-1})^2\le j_1
\\
\le j_0+C^{\prime\prime\prime}(M, \text{Re}(\lambda_1), \text{Re}(\lambda_2))\log(\delta^{-1})^2,
\end{multline*}
\begin{multline*}
j_0^{\prime}-C^{\prime\prime\prime}(M, \text{Re}(\lambda_1), \text{Re}(\lambda_2))\log(\delta^{-1})^2\le j_1^{\prime}
\\
\le j_0^{\prime}+C^{\prime\prime\prime}(M, \text{Re}(\lambda_1), \text{Re}(\lambda_2))\log(\delta^{-1})^2,
\end{multline*} 
since otherwise (\ref{right}) and (\ref{diffest}) would imply that $\tilde{v}_2+\tilde{w}_2<v_2+w_2-C^{\prime\prime\prime\prime}(M, \text{Re}(\lambda_1), \text{Re}(\lambda_2)) \delta$. This completes the proof for case $1$, since we have shown that for some constant $C(M, \text{Re}(\lambda_1), \text{Re}(\lambda_2))$ sufficiently large there are fewer than $C(M, \text{Re}(\lambda_1), \text{Re}(\lambda_2))\log(\delta^{-1})^2$ possible choices of indices $i_1, j_1, m_1, n_1$ and $i_1^{\prime}, j_1^{\prime}, m_1^{\prime}, n_1^{\prime}$ such that $B_{i_1, j_1, m_1, n_1}\ni\tilde{v}$ and $B_{i_1^{\prime}, j_1^{\prime}, m_1^{\prime}, n_1^{\prime}}\ni\tilde{w}$. The proof for case $2$ is similar.
\newline
\indent
Now let us assume we have case $3$. Suppose there is $\tilde{v}=(\tilde{v}_1, \tilde{v}_2)\in\tilde{A}\in\mathcal{A}_t$ and $\tilde{w}=(\tilde{w}_1, \tilde{w}_2)\in\tilde{B}\in\mathcal{B}_t$ such that $\tilde{v}+\tilde{w}=x$. Then if $\sigma^+(\tilde{v}, \tilde{w})$ is finite, then there is a constant $C^{\prime}(M, \text{Re}(\lambda_1), \text{Re}(\lambda_2))>0$ such that
\begin{align*}
\tilde{v}_1\le v_1+C^{\prime}(M, \text{Re}(\lambda_1), \text{Re}(\lambda_2))\delta,\qquad\tilde{w}_1\ge w_1-C^{\prime}(b_jM, \text{Re}(\lambda_1), \text{Re}(\lambda_2))\delta, 
\end{align*}
and if $\sigma^-(\tilde{v}, \tilde{w})$ is finite, then
\begin{align*}
\tilde{v}_1\le v_1+C^{\prime}(M, \text{Re}(\lambda_1), \text{Re}(\lambda_2))\delta,\qquad\tilde{w}_1\ge w_1-C^{\prime}(M, \text{Re}(\lambda_1), \text{Re}(\lambda_2))\delta.
\end{align*}
In either case, the previous argument shows there is a constant $C=C(M, 
\\
\text{Re}(\lambda_1), \text{Re}(\lambda_2))>0$ such that there are fewer than $C\log(\delta^{-1})^2$ possible choices of indices $i_1, j_1, m_1, n_1$ and $i_1^{\prime}, j_1^{\prime}, m_1^{\prime}, n_1^{\prime}$ such that $B_{i_1, j_1, m_1, n_1}
\\\ni\tilde{v}$ and $B_{i_1^{\prime}, j_1^{\prime}, m_1^{\prime}, n_1^{\prime}}\ni\tilde{w}$.

\end{proof}

\begin{proposition}\label{imp2}
Let $N(M, \text{Re}(\lambda_1), \text{Re}(\lambda_2))$ be a positive integer and let $\delta>0$, and fix positive real numbers $u$ and $t$ satisfying $\delta^{N(M, \text{Re}(\lambda_1), \text{Re}(\lambda_2))} t>u$. Then if $N(M, \text{Re}(\lambda_1), \text{Re}(\lambda_2))$ is sufficiently large, there exists a constant $C(M, \text{Re}(\lambda_1), \text{Re}(\lambda_2))>0$ (independent of $\delta$ and the choice of $u$ and $t$) such that no point of $\mathbb{R}^2$ is contained in more than $C(M, \text{Re}(\lambda_1), \text{Re}(\lambda_2))$ of the sets $\{A+B_{\rho}(0, 2/t)\}_{A\in\mathcal{A}_u}$, where $B_{\rho}(0, r)=\{x\in\mathbb{R}^2: \rho(x)\le r\}$.
\end{proposition}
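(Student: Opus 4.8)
The plan is to fix a point $x\in\mathbb{R}^2$ and estimate the number of $A\in\mathcal{A}_u$ with $x\in A+B_{\rho}(0,2/t)$; since $\mathcal{A}_u$ is contained in the family of \emph{all} pieces $B_{i,j,m,n}$ on which $\rho$ attains the value $1/u$, it suffices to bound the overlap of the fattenings of that larger family. The point is that after rescaling by the dilation group the added $\rho$-ball becomes so small that the only pieces reaching $x$ are those whose orbit-parameter arcs on $\partial\Omega$ cluster near a single point, and there are boundedly many such.

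First I would pin down the admissible indices. Writing $\sigma:=2^{2n}\big(\tfrac{1+2\delta}{1-2\delta}\big)^{m}$, one has $B_{i,j,m,n}=\sigma^{A}B_{i,j,0,0}$ and $\rho(B_{i,j,m,n})=\sigma\cdot[1-2\delta,1+2\delta]$; as $(m,n)$ runs over $\{N_{\delta}\le m\le N_{\delta}'\}\times\mathbb{Z}$ these multiplicative intervals cover $(0,\infty)$ with bounded multiplicity (because $R_i\subset\bigcup_{j,m}B_{i,j,m,0}$ and the full collection $\{B_{i,j,m,n}\}$ has multiplicity $\le N_M'$). Hence $1/u$ lies in only $O(1)$ of them, so only $O(1)$ pairs $(m,n)$ occur --- the same $O(1)$ pairs for every $(i,j)$ --- and it suffices to fix one such pair $(m,n)$ and bound $\#\{(i,j):x\in B_{i,j,m,n}+B_{\rho}(0,2/t)\}$.

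Fixing $(m,n)$ fixes $\sigma$, and $\sigma^{-A}$ is then a single linear bijection carrying $B_{i,j,m,n}$ to $B_{i,j,0,0}$ and $B_{\rho}(0,2/t)$ to $B_{\rho}(0,r)$ with $r=2/(t\sigma)$; the hypothesis $\delta^{N}t>u$ together with $\sigma\ge\tfrac{1}{(1+2\delta)u}$ gives $r<4\delta^{N}$, and since $B_{\rho}(0,r)=r^{A}\overline{\Omega}\subset r^{A}\{|\xi|<2^{M}\}$ its Euclidean diameter $D$ satisfies $D\lesssim r^{\min(\text{Re}(\lambda_1),\text{Re}(\lambda_2))}\big(1+\log\tfrac1r\big)\lesssim\delta^{N\min(\text{Re}(\lambda_1),\text{Re}(\lambda_2))}(\log\delta^{-1})^{O(1)}$. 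Introduce the orbit parameter $\theta(y):=\rho(y)^{-A}y\in\partial\Omega$. It is $t^{A}$-invariant and maps $B_{i,j,0,0}$ onto the arc $\Gamma_{i,j}:=\{\mathcal{R}_i^{-1}(\alpha,\gamma_i(\alpha)):\alpha\in I_j\}$ of $\partial\Omega$; the family $\{\Gamma_{i,j}\}_{i,j}$ covers $\partial\Omega$ with bounded multiplicity (inherited from the decomposition), each $\Gamma_{i,j}$ has arc length $\approx|I_j|$ since $\partial\Omega$ is a Lipschitz graph with constant $\le2^{M-1}$, and $\theta$ is Lipschitz with constant $\lesssim1$ on $\{1/2\le\rho\le2\}$ --- here one uses the $C^{0,1}$ control on $\rho$ (the place where compatibility of $(\Omega,A)$ enters) and the smoothness of $s\mapsto s^{-A}$ for $s\in[1/2,2]$. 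Consequently, if $\sigma^{-A}x=w+v$ with $w\in B_{i,j,0,0}$ and $v\in B_{\rho}(0,r)$, then $\theta(\sigma^{-A}x)$ lies within $\lesssim D$ of $\theta(w)\in\Gamma_{i,j}$; that is, $\Gamma_{i,j}$ meets the $O(D)$-neighborhood in $\partial\Omega$ of the fixed point $\theta(\sigma^{-A}x)$. Now I would choose $N=N(M,\text{Re}(\lambda_1),\text{Re}(\lambda_2))$ large enough that $D$ is smaller than a fixed small multiple of $\inf_j|I_j|$, which is possible uniformly over $0<\delta<C$ because every atom of $\mathfrak{A}(\delta)$ has length $\ge2^{-M}\delta$ and is cut into $\lesssim\log(\delta^{-1})$ intervals $I_j$, so that $\inf_j|I_j|\gtrsim_M\delta(\log\delta^{-1})^{-1}$; this forces $N>1/\min(\text{Re}(\lambda_1),\text{Re}(\lambda_2))$, with room to spare. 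Since the $\Gamma_{i,j}$ are subarcs of $\partial\Omega$ of arc length $\ge\inf_j|I_j|\gg D$ with bounded multiplicity, only $O(1)$ of them can meet an $O(D)$-neighborhood of a point, so only $O(1)$ pairs $(i,j)$ contribute. Multiplying by the $O(1)$ admissible pairs $(m,n)$ completes the estimate.

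The step demanding the most care is this final packing count together with the calibration of $N$: the lengths $|I_j|$ in the Seeger--Ziesler decomposition are only bounded below by $\delta(\log\delta^{-1})^{-1}$, not by a constant multiple of $\delta$, so $N$ must be chosen to absorb both the power of $\delta$ and the logarithmic loss, uniformly over all $\delta\in(0,C)$ and not merely for small $\delta$; and the passage from ``$x$ lies in the Euclidean fattening $A+B_{\rho}(0,2/t)$'' to ``$\theta$ is displaced by at most $O(D)$'' is exactly where the compatibility of $(\Omega,A)$, in the guise of the Lipschitz bound on $\rho$, is used.
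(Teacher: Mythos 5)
Your argument is essentially correct, but the route differs from the paper's in how the final counting is organized. The paper also reduces to $u=1$ and chooses $N$ so that $B_{\rho}(0,2/t)\subset B(0,\delta^2)$ Euclidean; but for the overlap count it proceeds directly in the ambient plane: it shows (using the angle condition $\Theta(\Omega,A)$, convexity, and the lower bound $|t^{-1}At^A\xi|\gtrsim1$) that $\partial\Omega$ and $(1\pm\delta)^A\partial\Omega$ are separated by $\gtrsim\delta^2$, which caps the number of admissible $(m,n)$, and then invokes the crude estimate $\operatorname{dist}(B_{i,j,m,n},B_{i,j',m,n})\gtrsim\delta$ for $|j-j'|>2$ to cap $j$. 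You instead push everything through the orbit parameter $\theta(y)=\rho(y)^{-A}y$: after dilating by $\sigma^{-A}$, the whole problem becomes a packing count of the arcs $\Gamma_{i,j}\subset\partial\Omega$, and the fattening by $B_\rho(0,r)$ only perturbs $\theta$ by $O(D)$. This is a cleaner, more unified reduction, and it uses the same two ingredients (the Lipschitz control on $\rho$ coming from compatibility, and the bounded overlap of the Seeger--Ziesler decomposition) that the paper does, just packaged differently. Your accounting of the $(m,n)$ multiplicity via the $\sigma$-scales is also fine, though it could be stated more directly by noting that $\sigma(m,n)$ is multiplicatively $e^{4\delta}$-spaced in $m$ and $4$-spaced in $n$, so a fixed multiplicative interval of ratio $\approx1+8\delta$ meets $O(1)$ of them.

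One small inferential slip worth flagging: the claim ``every atom of $\mathfrak{A}(\delta)$ has length $\ge2^{-M}\delta$ and is cut into $\lesssim\log(\delta^{-1})$ pieces, so $\inf_j|I_j|\gtrsim\delta(\log\delta^{-1})^{-1}$'' does not follow as written, because ``cut into $k$ pieces'' does not imply each piece has length at least $1/k$ of the total unless they are comparable in size, which the construction does not guarantee across a single atom. What the construction actually gives is that adjacent $I_j$ have comparable lengths and the refinement interpolates geometrically between the lengths of neighboring atoms; since the shortest atom has length $\gtrsim2^{-M}\delta$, the shortest $I_j$ is $\gtrsim_M\delta$, which is in fact better than what you stated. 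Either way your calibration of $N$ absorbs the discrepancy, so the conclusion stands; but the justification should be replaced. You should also note explicitly that the implied constant in the Lipschitz bound for $\theta$, and hence the threshold on $\delta$, depends on $\Theta(\Omega,A)$, matching the paper's assumption $\delta<C'(M,\operatorname{Re}\lambda_1,\operatorname{Re}\lambda_2,\Theta(\Omega,A))$, even though the final multiplicity constant does not.
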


\begin{proof}
Without loss of generality, suppose that $u=1$. Fix $A\in\mathcal{A}_u$, and let $x\in A$ and let $y\in B_{\rho}(0, 2/t)$. Choose $N(M, \text{Re}(\lambda_1), \text{Re}(\lambda_2))$ large enough to make $B_{\rho}(0, 2/t)\subset B(0, \delta^2)$, where $B(0, \delta^2)$ denotes the (Euclidean) ball of radius $\delta^2$ centered at the origin. Assume $\delta<C^{\prime}(M, \text{Re}(\lambda_1), \text{Re}(\lambda_2), \Theta(\Omega, A))$, where $C^{\prime}(M, \text{Re}(\lambda_1), \text{Re}(\lambda_2), \Theta(\Omega, A))>0$ is chosen sufficiently small so that the minimum angle between the tangent line to $\xi\in\partial\Omega$ and any tangent line to the curve $\{t^A\xi:\,1-10\delta\le t\le1+10\delta\}$ is at least $\delta^{1/2}$. Now for any $\xi\in\partial\Omega$, $1-10\delta\le t\le 1+10\delta$, we have 
\begin{align*}
\bigg|\frac{d}{dt}(t^A\xi)\bigg|=|t^{-1}At^A\xi|\gtrsim_{M, \text{Re}(\lambda_1), \text{Re}(\lambda_2)}1,
\end{align*}
and it follows that if $C^{\prime}(M, \text{Re}(\lambda_1), \text{Re}(\lambda_2), \Theta(\Omega, A))$ is sufficiently small, the (Euclidean) distance between $t^A\xi$ and the tangent line to $\partial\Omega$ at $\xi$ is at least $10\delta^2$. Since $\Omega$ is convex, we conclude that the distance between $\partial\Omega$ and $(1+\delta)^A\partial\Omega$ is at least $10\delta^2$. Similarly, the distance between $\partial\Omega$ and $(1-\delta)^A\partial\Omega$ is at least $10\delta^2$. It follows there is an absolute constant $C$ such that for any given $\xi\in\mathbb{R}^2$, there are fewer than $C$ possible values of $m$ (and clearly also fewer than $C$ possible values of $n$) such that $B_{i, j, m, n}+B(0, \delta^2)\ni\xi$ for some $B_{i, j, m, n}\in\mathcal{A}_1$. It remains to obtain an upper bound for the number of possible values of $j$. But it is clear that dist$(B_{i, j, m, n}, B_{i, j^{\prime}, m, n})\ge\delta/10$ for $|j-j^{\prime}|>2$, and this finishes the proof.

%We observe that for any $c>0$,
%$$\left<\delta_{1/(\rho(x)+c\rho(y))}
%(x+y)\right>\le\left<\delta_{\rho(x)/(\rho(x)+c\rho(y))}w(x)\right>+\left<\delta_{\rho(y)/(\rho(x)+c\rho(y))}w(y)\right>\le %1+c^{\prime}c^{-\alpha}$$
%for some positive $\alpha$. So $\rho(x+y)\le (1+c^{\prime}c^{-\alpha})(\rho(x)+c\rho(y))$. Now take %$c=\delta^{-M}$, where $M$ can be taken large enough to make $(1+c^{\prime}c^{-\alpha})$ smaller than %$(1+\delta^K)$ for any arbitrary $K$. Next, choose $N$ large enough to make $|B(0, 2/t)|%\lesssim\delta^2$ and $(\rho(x)+c\rho(y))\le (1+\delta^K)$. Then $\rho(x+y)\le(1+\delta^K)\rho(x)$. By a %similar argument, we also have $\rho(x+y)\ge (1-\delta^K)\rho(x)$. Taking $K$ sufficiently large, the %result follows.

\end{proof}

\begin{proposition}\label{imp3}
There exists an absolute constant $C>0$ such that for each fixed quadruple $(i, j, m, n)$, the logarithmic measure of $\{t: B_{i, j, m, n}\cap\text{supp\,}\mathcal{F}[\psi_t]\ne\emptyset\}$ is less than or equal to $C\delta$.
\end{proposition}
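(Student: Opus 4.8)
The proof is a direct computation: the set $B_{i,j,m,n}$ lies inside a thin ``$\rho$-annulus'' — a nonisotropic dilate of the basic slab $R_{i,\delta}\subseteq\{\xi:1-2\delta\le\rho(\xi)\le 1+2\delta\}$ — while $\text{supp}\,\mathcal{F}[\psi_t]$ is itself a $\rho$-annulus of comparable thickness, so the two can meet only for $t$ ranging over an interval of logarithmic length $O(\delta)$. This is the analogue of part~(iii) of Proposition~3 in \cite{car2}; I do not expect a genuine obstacle, only a little bookkeeping with the nonisotropic homogeneity of $\rho$ and with the convention identifying $\mathcal{F}[\psi_t]$.

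First I would record the two relevant supports. From the definition of $\psi_t$ in Proposition~\ref{mainprop}, $\mathcal{F}[\psi_t]$ is (a fixed constant multiple of) the multiplier $\xi\mapsto\phi(\rho(\xi)/t)=\Phi\big((\rho(\xi)/t-1)/\delta\big)$, so since $\Phi$ is supported in $[-1,1]$,
\[
\text{supp}\,\mathcal{F}[\psi_t]\ \subseteq\ \{\xi:\ (1-\delta)t\le\rho(\xi)\le(1+\delta)t\}.
\]
Second, I would compute the $\rho$-image of $B_{i,j,m,n}$. By construction $B_{i,j,m,n}=(2^{2n})^A\big(\tfrac{1+2\delta}{1-2\delta}\big)^{mA}B_{i,j,0,0}$ with $B_{i,j,0,0}\subseteq R_{i,\delta}\subseteq\{\xi:\ 1-2\delta\le\rho(\xi)\le 1+2\delta\}$; applying the homogeneity relation $\rho(s^A\zeta)=s\,\rho(\zeta)$, $s>0$, to the combined dilation $s=c_{m,n}:=2^{2n}\big(\tfrac{1+2\delta}{1-2\delta}\big)^{m}$ gives
\[
\rho(\xi)\ \in\ \big[\,c_{m,n}(1-2\delta),\ c_{m,n}(1+2\delta)\,\big]\qquad\text{for every }\xi\in B_{i,j,m,n}.
\]

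Finally I would combine the two. If $B_{i,j,m,n}\cap\text{supp}\,\mathcal{F}[\psi_t]\neq\emptyset$, then any $\xi$ in the intersection exhibits a common point of the intervals $[c_{m,n}(1-2\delta),c_{m,n}(1+2\delta)]$ and $[(1-\delta)t,(1+\delta)t]$, which forces
\[
\frac{c_{m,n}(1-2\delta)}{1+\delta}\ \le\ t\ \le\ \frac{c_{m,n}(1+2\delta)}{1-\delta}.
\]
Hence $\{t:\ B_{i,j,m,n}\cap\text{supp}\,\mathcal{F}[\psi_t]\neq\emptyset\}$ is contained in a single interval whose logarithmic measure is
\[
\log\frac{(1+2\delta)(1+\delta)}{(1-2\delta)(1-\delta)}\ =\ \log\frac{1+x}{1-x},\qquad x=\frac{3\delta}{1+2\delta^{2}}\le 3\delta,
\]
which is at most $9\delta$ once $\delta$ is small enough that $x<\tfrac12$, using the elementary bound $\log\frac{1+x}{1-x}\le 3x$ on $[0,\tfrac12]$. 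Since this estimate is uniform in $(i,j,m,n)$ — in particular $c_{m,n}$ cancels — any $C\ge 9$ works, which is the assertion. The only point that needs a little care is the first step, reading off $\text{supp}\,\mathcal{F}[\psi_t]$ correctly from the definition of $\psi_t$; everything else is the homogeneity computation and the one-line estimate for $\log\frac{1+x}{1-x}$.
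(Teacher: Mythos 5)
Your proof is correct, and it is exactly the computation the paper has in mind when it writes ``Immediate'': the homogeneity $\rho(s^A\zeta)=s\rho(\zeta)$ places $B_{i,j,m,n}$ in the $\rho$-slab $[c_{m,n}(1-2\delta),\,c_{m,n}(1+2\delta)]$, while $\operatorname{supp}\mathcal{F}[\psi_t]\subseteq\{(1-\delta)t\le\rho\le(1+\delta)t\}$, so overlap confines $t$ to a single multiplicative interval whose logarithmic length is $\log\frac{(1+2\delta)(1+\delta)}{(1-2\delta)(1-\delta)}=O(\delta)$, uniformly in $(i,j,m,n)$ since $c_{m,n}$ cancels. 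This is the same approach the paper takes; your write-up simply spells out the bookkeeping that the paper compresses to one word.
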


\begin{proof}
Immediate.
\end{proof}

\section{Kernel estimates and another $L^2$ maximal function estimate}\label{kerest}
We note that in both \cite{cor1} and \cite{car2}, it was important that regarding the decomposition of the multiplier $\phi(\delta^{-1}(1-|\xi|)$ where $\phi$ was a smooth bump function into pieces supported on $\delta^{1/2}\times\delta$ rectangles, each piece of the multiplier had $L^1$ norm essentially $1$. This was also true of the decomposition of $|\rho(\xi)-1|\le\delta$ introduced in \cite{sz}. In this section we prove that after the introduction of nonisotropic dilations, the same holds true. 
\newline
\indent
The argument presented in \cite{car2} also used $L^2$ bounds for maximal functions given by the supremum of convolutions by smooth bumps supported on finitely many essentially disjoint pieces of the decomposition of $\mathbb{R}^2$ given in \cite{car2}. Since these smooth bumps could be dominated by Schwartz functions adapted to rectangles, such a maximal function could be dominated by a Nikodym maximal function. Here, as well as in \cite{sz}, we do not have domination of the functions in our partition of unity by Schwartz functions adapted to rectangles, and the proof of $L^1$ kernel estimates is more delicate. As in \cite{sz}, this also implies that the associated maximal function that we use is not simply a nonisotropic Nikodym maximal function. However, we will show that the $L^2$ bounds for the nonisotropic Nikodym maximal function proved earlier imply $L^2$ bounds for the maximal function that we are interested in, with a similar constant.

\subsection*{A partition of unity associated to the decomposition of $\mathbb{R}^2$}
First, we need to define a partition of unity of $\mathbb{R}^2$, and as mentioned above one goal of this section is to show that each function in our partition of unity has bounded $L^1$ norm. Recall the decomposition
\begin{align*}
\mathbb{R}^2=\bigcup_{i, j, m, n}B_{i, j, m, n}.
\end{align*}
We now introduce a partition of unity $\{\sigma_{i, j, m, n}\}$ such that 
\begin{enumerate}
\item $\sigma_{i, j, m, n}\in C^{\infty}(\mathbb{R}^2)$ for every $(i, j, m, n)$,
\item $\sum_{i, j, m, n}\sigma_{i, j, m, n}(x)=1$ for every $x\in\mathbb{R}^2$,
\item There is a constant $C_M$ such that for every $(i_0, j_0, m_0, n_0)$, $\sigma_{i_0, j_0, m_0, n_0}$ is supported in $\bigcup_{|j|, |m|\le C_M}B_{i_0, j_0+j, m_0+m, n_0}$.
\end{enumerate}
Let $\phi\in C^{\infty}([-1, 1])$ be nonnegative and identically $1$ on $[-1/2, 1/2]$, and for $n\in\mathbb{Z}$ set $\phi_n(\cdot)=\phi(2^{-n-1}\cdot)-\phi(2^{-n}\cdot)$. For each $m$, let $\psi_m\in C^{\infty}(1-(2m+10)\delta, 1+(2m+10)\delta)$ such that $\sum_m\psi_m$ is identically $1$ on the support of $\phi_0$, and for every $k$, $D^k\psi_m\lesssim_k \delta^{-k}$.
\newline
\indent
For each $i$, let $S_i$ be the \textit{isotropic} sector bounded by $|\xi|=2, |\xi|=2^{M+2}$, and the rays through the origin and the points $\xi$ and $\xi^{\prime}$, where $\xi=(\xi_1, \xi_2)$ is the unique point in $\mathcal{R}_i\partial\Omega$ with $\xi_1=-1/4$ and $\xi_2>0$, and $\xi^{\prime}=(\xi_1^{\prime}, \xi_2^{\prime})$ is the unique point in $\mathcal{R}_i\partial\Omega$ with $\xi_1=1/4$ and $\xi_2>0$. Let $\tilde{S}_i$ be the isotropic sector bounded by  $|\xi|=1, |\xi|=2^{M+3}$, and the rays through the origin and the points $\xi$ and $\xi^{\prime}$, where $\xi=(\xi_1, \xi_2)$ is the unique point in $\mathcal{R}_i\partial\Omega$ with $\xi_1=-3/4$ and $\xi_2>0$, and $\xi^{\prime}=(\xi_1^{\prime}, \xi_2^{\prime})$ is the unique point in $\mathcal{R}_i\partial\Omega$ with $\xi_1=3/4$ and $\xi_2>0$.  For each $i$, let $\Psi_i$ be a smooth function supported in $\tilde{S}_i$ and identically $1$ on $S_i$, such that $D^k\Psi_i\lesssim_{M, k} 1$ for all $k$ and $\sum_i\Psi_i$ is identically $1$ on the region bounded by $|\xi|=2$ and $|\xi|=2^{M+2}$.  
\newline
\indent
Fix $i$, and for each $j$, let $\ell_{j-1}$, $\ell_j$, and $\ell_{j+1}$ be the lines through $(b_{j-1}, \gamma_i
\\
(b_{j-1}))$, $(i_j, \gamma_i(i_j))$, and $(b_{j+1}, \gamma_i(b_{j+1}))$, respectively, with slopes orthogonal to the tangent vectors $(1, \gamma_i^{\prime}(b_{j-1})$, $(1, \gamma_i^{\prime}(i_j))$, and $(1, \gamma_i^{\prime}(b_{j+1}))$, respectively. Let $e_j$ be a unit vector orthogonal to $\ell_j$. Let $\alpha$ be a $C^{\infty}(\mathbb{R})$ function such that $0\le\alpha\le 1$, $\alpha(x)=1$ for $x\in [-1, 1]$ and $\alpha(x)=0$ for $x\notin [-\frac{101}{100}, \frac{101}{100}]$, and set $\alpha_j(\xi)=\alpha(|I_j|^{-1}(\xi-(i_j, \gamma_i(i_j)))\cdot e_j)$.
We are now ready to define the functions $\sigma_{i, j, m, n}$. Let
\begin{multline}\label{sigmadef}
\sigma_{i, j, m, 0}(\xi)=\phi_0(\rho(\xi))\Psi_i((\frac{1-2\delta}{1+2\delta})^{mA}\xi)\psi_m(\rho(\xi))
\\
\times\alpha_j(\mathcal{R}_i(\frac{1-2\delta}{1+2\delta})^{mA}\xi)(1-\alpha_{j+1}(\mathcal{R}_i(\frac{1-2\delta}{1+2\delta})^{mA}\xi)),
\end{multline}
and 
\begin{align}\label{sigmadef2}
\sigma_{i, j, m, n}(\xi)=\sigma_{i, j, m, 0}((2^{-n})^A\xi).
\end{align}
For every $i$ and every $m$, we have
\begin{align*}
\sum_{j}\alpha_j(\mathcal{R}_i(\frac{1-2\delta}{1+2\delta})^{mA}\xi)(1-\alpha_{j+1}(\mathcal{R}_i(\frac{1-2\delta}{1+2\delta})^{mA}\xi))
\end{align*}
is identically $1$ on the support of 
\begin{align*}
\phi_0(\rho(\xi))\Psi_i((\frac{1-2\delta}{1+2\delta})^{mA}\xi)\psi_m(\rho(\xi)),
\end{align*}
and since 
\begin{align*}
\sum_i\sum_m\phi_0(\rho(\xi))\Psi_i((\frac{1-2\delta}{1+2\delta})^{mA}\xi)\psi_m(\rho(\xi))
\\=\sum_m\phi_0(\rho(\xi))\psi_m(\rho(\xi))=\phi_0(\rho(\xi)),
\end{align*}
it follows that for every $\xi\in\mathbb{R}^2$,
\begin{align*}
\sum_{i, j, m, n}\sigma_{i, j, m, n}(\xi)=1.
\end{align*}
\iffalse{

\begin{align*}
\sigma_{i, j, m, 0}(\xi)=\phi_0(\rho(\xi))\Psi_i((\frac{1-2\delta}{1+2\delta})^{mA}\xi)\psi_m(\rho(\xi))\beta_j((\mathcal{R}_i(1/\rho(\xi))^A\xi)_1)
\end{align*}
and 
\begin{align*}
\sigma_{i, j, m, n}(\xi)=\sigma_{i, j, m, 0}((2^{-n})^A\xi).
\end{align*}
If $\delta>C(M, \text{Re}(\lambda_1), \text{Re}(\lambda_2))$ for a sufficiently large constant $C(M, \text{Re}(\lambda_1), \text{Re}(\lambda_2))$, then for every $i$,
\begin{align*}
\sum_{j}\beta_j((\mathcal{R}_i(1/\rho(\xi))^A\xi)_1)
\end{align*}
is identically $1$ on the support of 
\begin{align*}
\sum_m\phi_0(\rho(\xi))\Psi_i((\frac{1-2\delta}{1+2\delta})^{mA}\xi)\psi_m(\rho(\xi)),
\end{align*}
and since
\begin{align*}
\sum_i\sum_m\phi_0(\rho(\xi))\Psi_i((\frac{1-2\delta}{1+2\delta})^{mA}\xi)\psi_m(\rho(\xi))=\sum_m\phi_0(\rho(\xi))\psi_m(\rho(\xi))=\phi_0(\rho(\xi)),
\end{align*}
it follows that for every $\xi\in\mathbb{R}^2$,
\begin{align*}
\sum_{i, j, m, n}\sigma_{i, j, m, n}(\xi)=1.
\end{align*}

For each $j$, let $\beta_j\in C^{\infty}(\mathbb{R})$ such that $D^k\beta_j\lesssim_k|I_j|^{-k}$ for all $k$ and $\sum_j\beta_j$ is identically $1$ on $[-4/5, 4/5]$.
Define 
\begin{align*}
\tilde{\sigma}_{i, j, m, 0}(\xi)=\phi_0(\rho(\xi))\Psi_i((\frac{1-2\delta}{1+2\delta})^{mA}\xi)\psi_m(\rho(\xi))\beta_j((\mathcal{R}_i(1/\rho(\xi))^A\xi)_1)
\end{align*} 
\begin{align*}
\tilde{\sigma}_{i, j, m, n}(\xi)=\tilde{\sigma}_{i, j, m, 0}((2^{-n})^A\xi).
\end{align*}
}\fi

\subsection*{Introduction of a maximal function associated with the partition of unity}
Let
\begin{align*}
K_{i, j, m, n}(x)=\mathcal{F}[\sigma_{i, j, m, n}(\cdot)](x).
\end{align*}
We define a maximal function $\overline{M}$ on $f\in\mathcal{S}(\mathbb{R}^2)$
by
\begin{align*}
\overline{M}f(x)=\sup_{i, j, m, n}\sup_{2^{n-10}\le t\le 2^{n+10}}|\psi_t\ast K_{i, j, m, n}\ast f(x)|.
\end{align*}
We will prove the following $L^2$ bounds for $\overline{M}$.
\begin{proposition}\label{bigmaxprop}
Let $\epsilon>0$. There is a constant $C=C(M, \text{Re}(\lambda_1), \text{Re}(\lambda_2), 
\\
\Theta(\Omega, A))$ such that if $0<\delta<C$, then for $f\in\mathcal{S}(\mathbb{R}^2)$, 
\begin{align*}
\Norm{\overline{M}f}_{L^2(\mathbb{R}^2)}\lesssim_{\epsilon, M, \text{Re}(\lambda_1), \text{Re}(\lambda_2), \Theta(\Omega, A)}\delta^{-\epsilon}\Norm{f}_{L^2(\mathbb{R}^2)}.
\end{align*}
\end{proposition}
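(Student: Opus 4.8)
The plan is to reduce the maximal operator $\overline{M}$ to the nonisotropic Nikodym-type maximal operator $M_{\lambda,N}$ of Proposition \ref{maxprop}, at the cost of an extra factor that is only polynomial in $\log(\delta^{-1})$, hence acceptable after absorbing into $\delta^{-\epsilon}$. The first step is to linearize: for each $x$ choose measurable selections $(i(x),j(x),m(x),n(x))$ and $t(x)\in[2^{n(x)-10},2^{n(x)+10}]$ realizing (up to a factor $2$) the supremum defining $\overline{M}f(x)$, so that it suffices to bound the linear operator $f\mapsto \psi_{t(x)}\ast K_{i(x),j(x),m(x),n(x)}\ast f(x)$ on $L^2$. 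Because the index $n$ only reflects the dyadic scale $(2^{2n})^A$ and the $t$-interval is comparable to $2^n$, and because by Proposition \ref{imp3} the $t$-support interacting with a fixed box $B_{i,j,m,n}$ has logarithmic measure $\lesssim\delta$, one can further fix the scale parameter $n$ by a standard orthogonality/almost-disjointness argument: convolutions against $\psi_t\ast K_{i,j,m,n}$ with $t$ and $n$ tied together act almost independently on frequency-disjoint pieces, so $\Norm{\overline M f}_2^2\lesssim \sum_{n}\Norm{\overline M_n f}_2^2$ where $\overline M_n$ only involves scale-$n$ boxes. Rescaling by $(2^{-2n})^A$ reduces everything to the case $n=0$.

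The heart of the matter is then the pointwise kernel estimate: one must show that for each quadruple $(i,j,m,0)$ and each $t\in[2^{-10},2^{10}]$, the convolution kernel $\psi_t\ast K_{i,j,m,0}$ is dominated by a Schwartz-tail bump adapted to the rectangle $R_{i,j,m}$ circumscribing the box $B_{i,j,m,0}$, namely $|\psi_t\ast K_{i,j,m,0}(y)|\lesssim_N |R_{i,j,m}|^{-1}\,\chi_{R_{i,j,m}}^{(N)}(y)$ for every $N$, where $\chi^{(N)}$ denotes the usual rapidly-decaying enlargement. This is exactly the point where the roughness of $\partial\Omega$ and the nonisotropy intervene and where one cannot simply invoke the radial computation from \cite{car2}; instead one argues as in \cite{sz}, integrating by parts in the directions adapted to the box. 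The key inputs are: (i) $\sigma_{i,j,m,0}$ is supported in (a bounded dilate of) $B_{i,j,m,0}$, which is contained in a rectangle of dimensions $\approx\delta\times|I_j|$ after applying $\mathcal R_i(\tfrac{1-2\delta}{1+2\delta})^{mA}$; (ii) the derivative bounds $D^k\psi_m\lesssim_k\delta^{-k}$, $D^k\alpha_j\lesssim_k|I_j|^{-k}$, $D^k\Psi_i\lesssim_{M,k}1$, together with the flatness estimates \eqref{left}–\eqref{right} from the decomposition of $\partial\Omega$ (which control the second-order variation of $\gamma_i$ on $I_j$ by $\lesssim \delta/|I_j|$), so that on the relevant box $\rho$ and hence $\phi_0(\rho)\psi_m(\rho)$ have controlled derivatives in the adapted coordinates; (iii) the fact that convolving with $\psi_t$ (whose symbol $\phi(\rho(\cdot)/t)$ is supported in an annulus of $\rho$-width $\delta$ and whose kernel is itself, by the $L^1$ kernel estimate of Section \ref{kerest} and \cite{sz}, an $L^1$-normalized bump) does not destroy this localization, since the symbol of $\psi_t$ is essentially constant on the support of $\sigma_{i,j,m,0}$ up to admissible error. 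Summing the Schwartz tails over the $\lesssim \log(\delta^{-1})$ neighboring boxes in the partition of unity (property (3) of $\{\sigma_{i,j,m,n}\}$) costs only a $\log(\delta^{-1})$ factor.

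Granting the kernel estimate, the linearized operator is pointwise bounded by a single average: $|\psi_{t(x)}\ast K_{i(x),j(x),m(x),0}\ast f(x)|\lesssim_N \sum_{|l|\le C_M}|R_{i(x),j(x)+l,m(x)}|^{-1}\int |f(y)|\,\chi^{(N)}_{R_{i(x),j(x)+l,m(x)}}(x-y)\,dy$, and the rectangles $R_{i,j,m}$ (over all admissible $i,j,m$) are, up to bounded eccentricity and up to the $\log(\delta^{-1})$-many sub-intervals $I_j$ comprising each flat piece of $\partial\Omega$, nonisotropic dilates of a fixed family of $\lambda\times N_0\lambda$-rectangles with $\lambda\approx\delta$ and $N_0\approx\delta^{-1}$. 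Hence this average is controlled by $M_{\delta,\delta^{-1}}$ applied to $f$ (after replacing the sharp cutoffs by their Schwartz tails in the standard way, which again only costs a constant). Proposition \ref{maxprop} then gives $\Norm{\overline M_0 f}_2\lesssim \log(\delta^{-1})^{\beta}\Norm f_2$ with $\beta$ depending only on $\mathrm{Re}(\lambda_1),\mathrm{Re}(\lambda_2)$, and the extra powers of $\log(\delta^{-1})$ accumulated from the partition-of-unity overlap and the refinement into the $I_j$'s combine to a fixed power of $\log(\delta^{-1})$, which is $\lesssim_\epsilon \delta^{-\epsilon}$. Undoing the rescaling and reassembling the scales $n$ via the orthogonality step completes the proof.

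I expect the main obstacle to be step two, the pointwise kernel estimate $|\psi_t\ast K_{i,j,m,0}|\lesssim |R_{i,j,m}|^{-1}\chi^{(N)}_{R_{i,j,m}}$: controlling the derivatives of $\sigma_{i,j,m,0}$ in the box-adapted coordinates requires carefully tracking how the nonisotropic dilation $(\tfrac{1-2\delta}{1+2\delta})^{mA}$ distorts the geometry and combining this with the one-sided flatness estimates \eqref{left}–\eqref{right} for $\gamma_i$ on the sub-intervals $I_j$, since $\rho$ is only Lipschitz (not $C^2$) a priori — it is precisely here that the reduction to smooth $\partial\Omega$ (Lemma \ref{intlemma}) and the quantitative nature of all constants are essential, so that the bounds survive the approximation argument.
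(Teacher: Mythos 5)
Your plan hinges on the pointwise bound
\begin{align*}
|\psi_t\ast K_{i,j,m,0}(y)|\lesssim_N |R_{i,j,m}|^{-1}\,\chi^{(N)}_{R_{i,j,m}}(y)\quad\text{for all }N,
\end{align*}
i.e.\ Schwartz-tail domination of the kernel by a bump adapted to the rectangle circumscribing the box, which would then let you compare $\overline M$ directly to the nonisotropic Nikodym maximal function of Proposition \ref{maxprop}. This is precisely the step that fails in the generality of the paper, and the paper says so explicitly in Section \ref{kerest}: unlike the radial case of \cite{car2}, ``we do not have domination of the functions in our partition of unity by Schwartz functions adapted to rectangles, and the proof of $L^1$ kernel estimates is more delicate,'' so that ``the associated maximal function \dots is not simply a nonisotropic Nikodym maximal function.'' The obstruction is quantitative, not just about Lipschitz $\rho$: after the reduction to smooth $\partial\Omega$ (Lemma \ref{intlemma}), all constants must still depend only on $M$, $\mathrm{Re}(\lambda_1)$, $\mathrm{Re}(\lambda_2)$, $\Theta(\Omega,A)$, and hence cannot involve higher derivatives of $\gamma_0$. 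The flatness inequalities \eqref{left}--\eqref{right} give control of $\gamma_0''$ only in the $L^1$ sense, $\int_{I_j}|\gamma_0''|\lesssim\delta/|I_j|$, not pointwise, so one can integrate by parts only a bounded number of times before uncontrolled derivatives appear. This yields at best polynomial decay of the kernel (e.g.\ $|\tilde K_2(x)|\lesssim 2^l|x|^{-3}$ in the paper's notation), not rapid decay of every order, and the averaged $\gamma_0''$-weight cannot be absorbed into a single Schwartz bump at a fixed rectangle scale.

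The paper's actual argument is structured very differently from your plan. It splits $\overline M=M_1+M_2$ by truncating the kernel inside/outside $|t^Ax|\gtrsim 2^{10Ml}$, writes $\mathcal F^{-1}[h_l(\rho(\cdot))]$ (and $\mathcal F^{-1}[\tilde m_j]$) as explicit boundary integrals over $\partial\Omega$ via the divergence theorem, and then performs a small number of integrations by parts in the homogeneous coordinates $(s,\alpha)$. This produces kernel bounds of the shape ``$|x|^{-2}\times$(one-variable convolution kernel in the adapted direction)$\times$ weight $(1+|\gamma_0''(\alpha)|)$,'' and the resulting maximal operator is dominated not by a single Nikodym maximal function but by a weighted sum $\sum_n 2^{-cn} M_{\lambda_n,N_n}$ of them at logarithmically many scales, after which Proposition \ref{maxprop} is invoked. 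Your outer scaffolding (linearization, fixing $n$ by almost-orthogonality and rescaling by $(2^{-2n})^A$, absorbing $\log(\delta^{-1})$-factors into $\delta^{-\epsilon}$) is compatible with the paper, but the core kernel estimate you propose would require exactly the Schwartz-tail domination that the nonisotropic, rough-boundary setting does not provide, so as written the argument does not close.

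Separately, a smaller issue: your claim that the rectangles $R_{i,j,m}$ are (up to bounded eccentricity) nonisotropic dilates of a fixed $\lambda\times N_0\lambda$ family with $\lambda\approx\delta$, $N_0\approx\delta^{-1}$ is not accurate, since $|I_j|$ ranges over dyadic scales between $\delta$ and $1$, so the eccentricity $N\approx|I_j|/\delta$ varies from $O(1)$ to $O(\delta^{-1})$; the estimate has to be carried out at each such eccentricity and then summed, which is part of why the paper's proof requires $\log(\delta^{-1})$-bookkeeping in $j$ as well as in $n$.
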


\begin{proof}
The proof will follow \cite{sz}. First note that without loss of generality we may drop the ``sup" in the $i$ index in the definition of $\overline{M}$ and assume $i=0$, and so in what follows we drop all $i$-indices. Set $l=\lceil{\log(\delta^{-1})}\rceil$. We decompose $\overline{M}=M_1+M_2$, where
\begin{align*}
M_1f(x)=\sup_{j, m, n}\sup_{2^{n-10}\le t\le 2^{n+10}}|\psi_t\ast (K_{j, m, n}\cdot\chi_{|t^A\cdot|\ge 2^{10M\cdot l}})\ast f(x)|,
\end{align*}
\begin{align*}
M_2f(x)=\sup_{j, m, n}\sup_{2^{n-10}\le t\le 2^{n+10}}|\psi_t\ast (K_{j, m, n}\cdot\chi_{|t^A\cdot|< 2^{10M\cdot l}})\ast f(x)|.
\end{align*}
We will first prove Proposition \ref{bigmaxprop} with $\overline{M}$ replaced by $M_1$. Let $\sigma_j(\xi)=\mathcal{F}^{-1}[K_{j, 0, 0}(\cdot)](\xi)$. Note that 
\begin{align}\label{mult1}
\sigma_j(\xi)=\phi_0(\rho(\xi))\psi_0(\rho(\xi))m_j(\xi),
\end{align}
where
\begin{multline}\label{mult2}
m_j(\xi)=\Psi_0(\xi)\phi_0(2^{-2M}\xi)\alpha_j(\xi)(1-\alpha_j(\xi-2^{M+10}|I_j|(1, \gamma_0^{\prime}(i_j))))
\\
(1-\alpha_{j+1}(\xi))(\alpha_{j+1}(\xi+2^{M+10}|I_j|(1, \gamma_0^{\prime}(b_{j+1})))).
\end{multline}
\indent Now let $\beta\in C^{\infty}$ be supported in $[-1, 1]$, and let $h_l(s)=\beta(2^{l}(1-s))$. Note that (\ref{mult1}) says that $\sigma_j$ is of the form $h_l(\rho(\cdot))m_j(\cdot)$. We claim that to prove Proposition \ref{bigmaxprop} with $M_1$ in place of $\overline{M}$, it in fact suffices to prove Proposition \ref{bigmaxprop} with $\overline{M}f$ replaced by
\begin{align*}
\sup_{t\in(0, \infty)}|(\chi_{|t^{A}\cdot|\ge 2^{5M\cdot l}}\cdot\mathcal{F}^{-1}[h_l(t\rho(\cdot))])\ast f(x)|.
\end{align*}
This will follow immediately from the observation that $2^{M+10}|I_j|^{-1}<<2^{10M\cdot l}$ and that for any annulus $\mathcal{A}_k$,
\begin{align}\label{intann}
\int_{\mathcal{A}_k}\mathcal{F}^{-1}[h_l(\rho(\cdot))](x)\,dx\lesssim 1,
\end{align} 
which will be proven later.
\newline
\indent
We now prove pointwise estimates for $\mathcal{F}^{-1}[h_l(\rho(\cdot))](x)$, which we write as an integral over $\partial\Omega$ as follows:
\begin{align*}
(2\pi)^2\mathcal{F}^{-1}[h_l(\rho(\cdot))](x)=\int_{\Omega}h_l(\rho(\xi))e^{i\left<x, \xi\right>}\,d\xi=-\int_{\Omega}e^{i\left<x, \xi\right>}\int_{\rho(\xi)}^{\infty}h_l^{\prime}(s)\,ds\,d\xi
\end{align*}
\begin{align*}
=-\int_0^{\infty}h_l^{\prime}(s)\int_{\rho(\xi)\le s}e^{i\left<x, \xi\right>}\,d\xi\,ds=-\int_0^{\infty}h_l^{\prime}(s)\int_{\rho(\xi)\le 1}e^{i\left<x, s^A\xi\right>}|\det s^A|\,d\xi\,dx
\end{align*}
\begin{align*}
=\int_0^{\infty}i|s^{A^{\ast}}x|^{-2}h_l^{\prime}(s)\int_{\partial\Omega}e^{i\left<s^{A^{\ast}}x, \xi\right>}\left<s^{A^{\ast}}x, n(\xi)\right>\,d\sigma(\xi)|\det s^A|\,ds.
\end{align*}
In the above computation, we used the divergence theorem applied to the vector field $\xi\mapsto (i|s^{A^{\ast}}x|^2)^{-1}s^{A^{\ast}}xe^{i\left<s^{A^{\ast}}x, \xi\right>}.$ For each $i$, let $\zeta_i\in C^{\infty}(\mathbb{R})$ be supported in $[-4/5, 4/5]$ and identically $1$ on $[-1/3, 1/3]$ such that $\sum_i\zeta_i((\mathcal{R}_i
\\(1/\rho(\xi))^A\xi)_1)\equiv 1$. It suffices to estimate
\begin{multline}\label{int1}
\int_0^{\infty}i|s^{A^{\ast}}x|^{-2}h_l^{\prime}(s)\int_{\partial\Omega}e^{i\left<s^{A^{\ast}}x, \xi\right>}
\\
\left<s^{A^{\ast}}x, n(\xi)\right>
\zeta_0((\xi)_1)d\sigma(\xi)|\det s^A|\,ds.
\end{multline}
We introduce homogeneous coordinates
\begin{align}\label{homcoord}
(s, \alpha)\mapsto\xi(s, \alpha)=s^A(\alpha, \gamma_0(\alpha)).
\end{align}
The Jacobian of the map (\ref{homcoord}) is 
\begin{align*}
\left<s^A(1, \gamma_0^{\prime}(\alpha)), s^{-1}As^A(\alpha, \gamma_0(\alpha))\right>.
\end{align*}
Using homogeneous coordinates, (\ref{int1}) can be written as
\begin{multline}\label{int2}
i\int\zeta_0(\alpha)\int_0^{\infty}|s^{A^{\ast}}x|^{-2}h_l^{\prime}(s)e^{i\left<x, s^A(\alpha, \gamma_0(\alpha))\right>}
\\
\times\left<x, s^A(-\gamma_0^{\prime}(\alpha), 1)(1+(\gamma_0^{\prime}(\alpha))^2)^{-1/2}\right>
\\
\times\left<s^A(1, \gamma_0^{\prime}(\alpha)), s^{-1}As^A(\alpha, \gamma_0(\alpha))\right>|\det s^A|\,ds\,d\alpha.
\end{multline}

Let $\eta:\mathbb{R}\to\mathbb{R}$ be a smooth function supported in $[-\epsilon, \epsilon]$, where
\begin{align}\label{etasupp}
\epsilon=\Theta(\Omega, A)\cdot\min(|\lambda_1|, |\lambda_2|)/(100\cdot 2^{M+2}).
\end{align} 
Then (\ref{int2}) can be written as $\tilde{K}_1(x)+\tilde{K}_2(x)$, where
\begin{multline*}
\tilde{K}_1(x)=
i\int\zeta_0(\alpha)\int_0^{\infty}|s^{A^{\ast}}x|^{-2}h_l^{\prime}(s)e^{i\left<x, s^A(\alpha, \gamma_0(\alpha))\right>}
\\
\times\eta\big(\frac{\left<x, A(\alpha, \gamma_0(\alpha))\right>}{|x|}\big)\left<x, s^A(-\gamma_0^{\prime}(\alpha), 1)(1+(\gamma_0^{\prime}(\alpha))^2)^{-1/2}\right>
\\
\times\left<s^A(1, \gamma_0^{\prime}(\alpha)), s^{-1}As^A(\alpha, \gamma_0(\alpha))\right>|\det s^A|\,ds\,d\alpha,
\end{multline*}
\begin{multline*}
\tilde{K}_2(x)=i\int\zeta_0(\alpha)\int_0^{\infty}|s^{A^{\ast}}x|^{-2}h_l^{\prime}(s)e^{i\left<x, s^A(\alpha, \gamma_0(\alpha))\right>}
\\
\times\big(1-\eta\big(\frac{\left<x, A(\alpha, \gamma_0(\alpha))\right>}{|x|}\big)\big)\left<x, s^A(-\gamma_0^{\prime}(\alpha), 1)(1+(\gamma_0^{\prime}(\alpha))^2)^{-1/2}\right>
\\
\times\left<s^A(1, \gamma_0^{\prime}(\alpha)), s^{-1}As^A(\alpha, \gamma_0(\alpha))\right>|\det s^A|\,ds\,d\alpha.
\end{multline*}

To estimate $\tilde{K}_2(x)$, we integrate by parts with respect to $s$ twice. This yields
\begin{multline*}
\tilde{K}_2(x)=i\int\zeta_0(\alpha)\big(1-\eta\big(\frac{\left<x, A(\alpha, \gamma_0(\alpha))\right>}{|x|}\big)\big)
\\
\int_0^{\infty}g_2(x, s, \alpha)e^{i\left<x, s^A(\alpha, \gamma_0(\alpha))\right>}\,ds\,d\alpha,
\end{multline*}
where
\begin{multline*}
g_2(x, s, \alpha)=\frac{d}{ds}\bigg(\left<x, s^{-1}As^A(\alpha, \gamma_0(\alpha))\right>^{-1}\frac{d}{ds}\bigg(\left<x, s^{-1}As^A(\alpha, \gamma_0(\alpha))\right>^{-1}
\\\times|s^{A^{\ast}}x|^{-2}h_l^{\prime}(s)\left<x, s^A(-\gamma_0^{\prime}(\alpha), 1)(1+(\gamma_0^{\prime}(\alpha))^2)^{-1/2}\right>
\\
\times\left<s^A(1, \gamma_0^{\prime}(\alpha)), s^{-1}As^A(\alpha, \gamma_0(\alpha))\right>|\det s^A|\bigg)\bigg).
\end{multline*}

Note that if $0<\delta<C$ for a sufficiently small constant $C>0$, then for $s$ in the support of $h_l(s)$ and for $x$ in the the support of $1-\eta\big(\frac{\left<x, A(\alpha, \gamma_0(\alpha))\right>}{|x|}\big)$, we have $\left<x, s^{-1}s^AA(\alpha, \gamma_0(\alpha))\right>\ge |x|\cdot\epsilon/2$. Thus 
\begin{align*}
|g_2(x, s, \alpha)|\lesssim |x|^{-3}|h_l^{\prime\prime}(s)|.
\end{align*}
This implies that
\begin{align}\label{K2est}
|\tilde{K}_2(x)|\lesssim|x|^{-3}\int\zeta_0(\alpha)\int |h^{\prime\prime}_l(s)|\,ds\,d\alpha\lesssim 2^{l}|x|^{-3}.
\end{align}
To estimate $\tilde{K}_1(x)$, we integrate by parts with respect to $\alpha$ once and then with respect to $s$ twice, which yields
\begin{align*}
\tilde{K}_1(x)=\int_0^{\infty}\int
g_1(x, s, \alpha)e^{i\left<x, s^{A}(\alpha, \gamma_0(\alpha))\right>}\,d\alpha\,ds,
\end{align*}
where
\begin{multline*}
g_1(x, s, \alpha)=-\frac{d}{ds}\bigg(\left<x, s^{-1}As^A(\alpha, \gamma_0(\alpha))\right>^{-1}\frac{d}{ds}\bigg(\left<x, s^{-1}As^A(\alpha, \gamma_0(\alpha))\right>^{-1}
\\
\times\frac{d}{d\alpha}\bigg(\left<x, s^A(1, \gamma_0^{\prime}(\alpha))\right>^{-1}\zeta_0(\alpha)\eta\big(\frac{\left<x, A(\alpha, \gamma_0(\alpha))\right>}{|x|}\big)
\\
\times i\left<x, s^A(-\gamma_0^{\prime}(\alpha), 1)(1+(\gamma_0^{\prime}(\alpha))^2)^{-1/2}\right>
\\
\times\left<s^A(1, \gamma_0^{\prime}(\alpha)), s^{-1}As^A(\alpha, \gamma_0(\alpha))\right>|s^{A^{\ast}}x|^{-2}h_l^{\prime}(s)|\det s^A|\bigg)\bigg)\bigg).
\end{multline*}
By the choice of $\epsilon$, we have that for $s$ in the support of $h_l(s)$ and for $x$ in the the support of $\eta\big(\frac{\left<x, A(\alpha, \gamma_0(\alpha))\right>}{|x|}\big)$, if $\theta$ denotes the angle between $x$ and $A(\alpha, \gamma_0(\alpha))$, then $\cos(\theta)\le \Omega(\Theta, A)/100$. Since $A(\alpha, \gamma_0(\alpha))$ is tangent to the orbit $\{s^A(\alpha, \gamma_0(\alpha)):\, s>0\}$ at $(\alpha, \gamma_0(\alpha))$, if $0<\delta<C$ for a sufficiently small constant $C$, we have 
\begin{align*}
|\left<x, s^A(1, \gamma_0^{\prime}(\alpha))\right>|^{-1}\ge(\Theta(\Omega, A)/2^{M+2})\cdot|x|.
\end{align*}
It follows that
\begin{align*}
|g_1(x, s, \alpha)|\lesssim|x|^{-2}|\left<x, s^{-1}As^A(\alpha, \gamma_0(\alpha))\right>|^{-2}|h_l^{\prime\prime}(s)|\zeta_0(\alpha)(1+|\gamma^{\prime\prime}(\alpha)|),
\end{align*}
and hence
\begin{multline}\label{K1est}
|\tilde{K}_1(x)|\lesssim\int\int|x|^{-2}|1+|\left<x, s^{-1}As^A(\alpha, \gamma_0(\alpha))\right>||^{-2}
\\
\times|h_l^{\prime\prime}(s)|\zeta_0(\alpha)|(1+|\gamma^{\prime\prime}(\alpha)|)\,d\alpha\,ds.
\end{multline}
It follows from (\ref{K2est}) that
\begin{align}\label{rest3}
\int_{\mathcal{A}_k}|\tilde{K}_2(x)|
\lesssim2^{l-k},
\end{align}
and it follows from (\ref{K1est}) that
\begin{align}\label{rest4}
\int_{\mathcal{A}_k}|\tilde{K}_1(x)|
\lesssim2^{l-k},
\end{align}
and (\ref{rest3}) and (\ref{rest4}) imply (\ref{intann}).
\newline
\indent
Now, (\ref{K2est}) implies that for $0<\delta<C$ we have that $|\tilde{K}_2\cdot\chi_{|t^{A}\cdot|\ge 2^{5M\cdot l}}|$ is bounded above by a radial, decreasing function with $L^1$ norm $\lesssim 1$. It follows that there is a sequence $\{a_n\}$ with $a_n\ge 0$ and $\sum_{n=0}^{\infty}a_n\lesssim 1$ such that for $0<\delta<C$,
\begin{multline}\label{k2maxest}
\Norm{\sup_{t\in(0, \infty)}|(\chi_{|t^{A}\cdot|\ge 2^{5M\cdot l}}\cdot \det(t^A)\tilde{K}_2(t^A\cdot))\ast f(x)|}_{L^2(\mathbb{R}^2)}
\\\lesssim\Norm{\sum_{n=0}^{\infty}a_nM_{2^{-n}, 1}f}\lesssim\delta^{-\epsilon}\Norm{f}_{L^2(\mathbb{R}^2)},
\end{multline}
where we have applied Proposition \ref{maxprop}.
\newline
\indent
We now prove a similar estimate for $\tilde{K}_2$. Observe that (\ref{K1est}) implies that if $0<\delta<C$,
\begin{multline*}
\sup_{t\in(0, \infty)}|(\chi_{|t^{A}\cdot|\ge 2^{5M\cdot l}}\cdot \det(t^A)\tilde{K}_1(t^A\cdot))\ast f(x)|\lesssim
\\
\int\int \sum_{n=0}^{\infty}2^{-n/4}M_{2^{n/4}, 2^{n/4}}f(x)|h^{\prime\prime}_l(x)|\xi_0(\alpha)|(1+|\gamma^{\prime\prime}(\alpha)|\,d\alpha\,ds,
\end{multline*}
and hence by Proposition \ref{maxprop},
\begin{align}\label{k1maxest}
\Norm{\sup_{t\in(0, \infty)}|(\chi_{|t^{A}\cdot|\ge 2^{5M\cdot l}}\cdot \det(t^A)\tilde{K}_1(t^A\cdot))\ast f(x)|}_{L^2(\mathbb{R}^2)}\lesssim\delta^{-\epsilon}\Norm{f}_{L^2(\mathbb{R}^2)}.
\end{align}
Together (\ref{k2maxest}) and (\ref{k1maxest}) prove the result with $\overline{M}$ replaced by $M_1$.
\newline
\indent
It remains to prove the result with $\overline{M}$ replaced by $M_2$. Observe that 
\begin{multline*}
\sigma_j(\xi)=\phi_0(2^{-2M})\alpha_j(\xi)(1-\alpha_j(\xi-2^{M+10}|I_j|(1, \gamma_0^{\prime}(i_j))))
\\\times(1-\alpha_{j+1}(\xi))(\alpha_{j+1}(\xi+2^{M+10}|I_j|(1, \gamma_0^{\prime}(b_{j+1}))))\cdot\tilde{m}_j(\xi),
\end{multline*}
where 
\begin{align*}
\tilde{m}_j(\xi)=\Psi_0(\xi)\psi_0(\rho(\xi))\nu_j(((1/\rho(\xi))^A\xi)_1),
\end{align*}
for some $C^{\infty}$ function $\nu_j$ supported in an interval $I_j^{\ast}$ of width $10|I_j|$ satisfying 
\begin{align*}
|D^i\nu_j|\lesssim |I_j|^{-i}
\end{align*}
for every integer $i\ge 0$. The kernel of the multiplier $\tilde{m}_j$ can be easily written as an integral in homogeneous coordinates. If we can prove that for every annulus $\mathcal{A}_k$ with $k\ge 0$,
\begin{align}\label{nest1}
\int_{\mathcal{A}_k}|\mathcal{F}^{-1}[\tilde{m}_j(\cdot)](x)|\,dx\lesssim l,
\end{align}
then it would follow that the desired result reduces to proving the result of the proposition with $\overline{M}f(x)$ replaced by
\begin{align}\label{rando}
\sup_{t\in(0, \infty)}|(\chi_{|t^A\cdot|\le 2^{20M\cdot l}}\cdot\mathcal{F}^{-1}[\tilde{m}_j(t^{-A}\cdot)])\ast f(x)|.
\end{align}
We now proceed to prove (\ref{nest1}). As before, let $\eta$ be smooth and supported in $[-\epsilon, \epsilon]$, where $\epsilon$ is given by (\ref{etasupp}). Also, as before let $\phi\in C^{\infty}([-1, 1])$ be nonnegative and identically $1$ on $[-1/2, 1/2]$, and for $n\in\mathbb{Z}$ set $\phi_n(\cdot)=\phi(2^{-n-1}\cdot)-\phi(2^{-n}\cdot)$. Define
\begin{align}\label{Phi0}
\Phi_0(x, s, \alpha)=\phi_0(|I_j|\left<x, s^A(1, \gamma_0^{\prime}(\alpha))\right>)\eta\big(\frac{\left<x, s^A(1, \gamma_0^{\prime}(\alpha))\right>}{|x|}\big))
\end{align}
\begin{multline}\label{Phin}
\Phi_n(x, s, \alpha)=(\phi_0(2^{-n-1}|I_j|\left<x, s^A(1, \gamma_0^{\prime}(\alpha))\right>)
\\
-\phi_0(2^{-n}|I_j|\left<x, s^A(1, \gamma_0^{\prime}(\alpha))\right>))\eta\big(\frac{\left<x, s^A(1, \gamma_0^{\prime}(\alpha))\right>}{|x|}\big).
\end{multline}
We decompose the kernel as
\begin{align}\label{nest2}
\mathcal{F}^{-1}[\tilde{m}_j(\cdot)](x)=\frac{1}{(2\pi)^2}[\tilde{K}_j(x)+\sum_{n\ge 0}K_{j, n}(x)],
\end{align}
where
\begin{align*}
K_{j, n}(x)=\int\nu_j(\alpha)\int h_l(s)\Phi_n(x, s, \alpha)e^{i\left<x, s^A(\alpha, \gamma_0(\alpha))\right>}
\\
\times\left<s^A(1, \gamma_0^{\prime}(\alpha)), s^{-1}As^A(\alpha, \gamma_0(\alpha))\right>\,ds\,d\alpha
\end{align*}
and
\begin{align*}
\tilde{K}_j(x)=\int\nu_j(\alpha)\,\big(1-\eta\big(\frac{\left<x, s^A(1, \gamma_0^{\prime}(\alpha))\right>}{|x|}\big)\big)\int h_l(s)e^{i\left<x, s^A(\alpha, \gamma_0(\alpha))\right>}
\\
\times\left<s^A(1, \gamma_0^{\prime}(\alpha)), s^{-1}As^A(\alpha, \gamma_0(\alpha))\right>\,ds\,d\alpha.
\end{align*}
Note that the sum in (\ref{nest2}) has only $\lesssim\log(1+|I_j||x|)$ terms, since $K_{j, n}(x)=0$ if $2^{n-10}|I_j|^{-1}\ge\epsilon|x|$. In particular if $x\in\mathcal{A}_k\cap\text{sup}(K_{j, n})$ then $2^n<<2^k|I_j|$.
\\
\indent For $K_{j, 0}$, we simply estimate $\int_{\mathbb{R}^2}|K_{j, 0}(x)|\,dx$. For a given $(\alpha, s)$, we introduce coordinates 
\begin{align}\label{coord}
(u_1, u_2)\mapsto\xi(u_1, u_2)=u_1s^A(1, \gamma_0^{\prime}(\alpha))+u_2s^{-1}As^A(1, \gamma_0^{\prime}(\alpha)).
\end{align}
The Jacobian of the map (\ref{coord}) is $\approx1$.
Integrating by parts three times in $s$ yields
\begin{multline}\label{pointkj0}
|K_{j, 0}(x)|\lesssim
\\
\int_{s:\,|s-1|\approx 2^{-l}}\int_{\substack{\alpha\in I_j^{\ast}\\\left<x, s^A(1, \gamma_0^{\prime}(\alpha))\right>\le(2|I_j|)^{-1}}}(1+2^{-l}|\left<x, s^{-1}As^A(\alpha, \gamma_0(\alpha)\right>|)^{-3}\,ds\,d\alpha,
\end{multline}
and thus using the change of coordinates (\ref{coord})
\begin{multline}\label{kj0est}
\int_{\mathbb{R}^2}|K_{j, 0}(x)|\,dx\lesssim\\
\int_{I_j^{\ast}}\int\int_{|u_1|\le (2|I_j|)^{-1}}2^{-l}(1+2^{-l}|u_2|)^{-3}\,du_1\,du_2\,d\alpha\lesssim1.
\end{multline}
For $n>0$, we integrate by parts with respect to $\alpha$ once and then with respect to $s$ twice, which yields
\begin{align*}
K_{j, n}(x)=\int\int h_l(s)g_n(x, s, \alpha)e^{i\left<x, s^A(\alpha, \gamma_0(\alpha))\right>}\,d\alpha\,ds,
\end{align*}
where
\begin{multline*}
g_n(x, s, \alpha)=-\frac{d}{ds}\bigg(\frac{1}{\left<x, s^{-1}As^A(\alpha, \gamma_0(\alpha))\right>}\frac{d}{ds}\bigg(\frac{1}{\left<x, s^{-1}As^A(\alpha, \gamma_0(\alpha))\right>}
\\
\frac{d}{d\alpha}\bigg(\frac{1}{\left<x, s^A(1, \gamma_0^{\prime}(\alpha))\right>}\nu_j(\alpha)\Phi_n(x, s, \alpha)
\left<s^A(1, \gamma_0^{\prime}(\alpha)), s^{-1}As^A(\alpha, \gamma_0(\alpha))\right>\bigg).
\end{multline*}
On the support of $h_l(s)$ we have
\begin{align*}
|g_n(x, s, \alpha)|\lesssim
\frac{(1+|x|2^{-n}|I_j|)|\gamma_0^{\prime\prime}(\alpha)|+|I_j|^{-1}}{2^{-2l}|\left<x, s^{-1}As^A(1, \gamma_0^{\prime}(\alpha))\right>|^2|\left<x, s^A(1, \gamma_0^{\prime}(\alpha))\right>|},
\end{align*}
and so
\begin{multline}\label{kjnpoint}
|K_{j, n}(x)|\lesssim\int_{s:\,|s-1|\approx 2^{-l}}\int_{\substack{\alpha\in I_j^{\ast}\\|\left<x, s^A(1, \gamma_0^{\prime}(\alpha))\right>|\\\approx 2^n|I_j|^{-1}}}
\\
\frac{(1+|x|2^{-n}|I_j|)|\gamma_0^{\prime\prime}(\alpha)|+|I_j|^{-1}}{|\left<x, s^A(1, \gamma_0^{\prime}(\alpha)\right>|}
\frac{1}{(1+2^{-l}|\left<x, s^{-1}As^A(1, \gamma_0^{\prime}(\alpha))\right>|)^2}\,d\alpha\,ds.
\end{multline}
Using the change of coordinates (\ref{coord}), it follows that
\begin{multline*}
\int_{\mathcal{A}_k}|K_{j, n}(x)|\,dx\lesssim\int_{s:\,|s-1|\approx 2^{-l}}2^l\int_{\alpha\in I_j^{\ast}}((1+2^{k-n}|I_j|)|\gamma_0^{\prime\prime}(\alpha)|+|I_j|^{-1})
\\\times\int_{\substack{u_1\approx 2^n|I_j|^{-1}\\|u|\approx 2^k}}|u_1|^{-1}\frac{2^{-l}}{(1+2^{-l}|u_1|)^2}\,du\,d\alpha
\\
\lesssim\int_{I_j^{\ast}}(|\gamma_0^{\prime\prime}(\alpha)|+2^{k-n}|I_j||\gamma_0^{\prime\prime}(\alpha)|+|I_j|^{-1})\,d\alpha.
\end{multline*}
By (\ref{left}) we have $\int_{I_j^{\ast}}|I_j||\gamma_0^{\prime\prime}(\alpha)|\,d\alpha\le 2^{-l}$, and so
\begin{align*}
\int_{\mathcal{A}_k}|K_{j, n}(x)|\,dx\lesssim\min\{2^{k-l}, 2^{l-k}\}(2^{k-l-n}+1).
\end{align*}
Since $K_{j, n}$ is identically $0$ on $\mathcal{A}_k$ if $n\ge k$, summing in $n$ and also using (\ref{kj0est}) yields
\begin{align}\label{nint}
\sum_{n\ge 0}\int_{\mathcal{A}_k}|K_{j, n}(x)|\,dx\lesssim k2^{-|k-l|}.
\end{align}
Now we estimate $\tilde{K}_j$. Integrating by parts once in $\alpha$ and then once in $s$ yields
\begin{align}\label{tildpoint}
|\tilde{K}_j(x)|\lesssim\int_{s:\,|s-1|\approx 2^{-l}}\int_{I_j^{\ast}}\frac{(|I_j|^{-1}+|\gamma_0^{\prime\prime}(\alpha)|)}{|x|(1+2^{-l}|\left<x, s^{-1}As^A(\alpha, \gamma_0(\alpha)\right>|)}\,d\alpha\,ds,
\end{align}
and so using the change of coordinates (\ref{coord}) we get
\begin{align}\label{tildint}
\int_{\mathcal{A}_k}|\tilde{K}_j(x)|\,dx\lesssim 1.
\end{align}
Combining (\ref{nint}) and (\ref{tildint}) gives (\ref{nest1}). We now proceed to examine
\begin{align*}\sup_{t\in(0, \infty)}|(\chi_{|t^A\cdot|\le 2^{20M\cdot l}}\cdot\mathcal{F}^{-1}[\tilde{m}_j(t^{-A}\cdot)])\ast f(x)|.
\end{align*}
By (\ref{pointkj0}), for $0<\delta<C$ we have
\begin{multline*}
\sup_{t\in(0, \infty)}|(\chi_{|t^A\cdot|\le 2^{20M\cdot l}}\cdot K_{j, 0}(t^A\cdot))\ast f(x)|\lesssim
\\
\int_{s:\,|s-1|\approx 2{-l}}2^l\int_{\alpha\in |I_j|^{\ast}}|I_j|^{-1}\sum_{n=0}^{C_M\cdot l}M_{|I_j|^{-1}, 2^{l+n/3}|I_j|}f(x)\,d\alpha\,ds,
\end{multline*}
and hence by Proposition \ref{maxprop},
\begin{align}\label{finalkj0}
\Norm{\sup_{t\in(0, \infty)}|(\chi_{|t^A\cdot|\le 2^{20M\cdot l}}\cdot K_{j, 0}(t^A\cdot))\ast f(x)|}_{L^2(\mathbb{R}^2)}\lesssim_{\epsilon}\delta^{-\epsilon}\Norm{f}_{L^2(\mathbb{R}^2)}.
\end{align}
Similarly examining (\ref{kjnpoint}) and (\ref{tildpoint}) leads to
\begin{align}\label{finalkjn}
\Norm{\sup_{t\in(0, \infty)}|(\chi_{|t^A\cdot|\le 2^{20M\cdot l}}\cdot K_{j, n}(t^A\cdot))\ast f(x)|}_{L^2(\mathbb{R}^2)}\lesssim_{\epsilon}\delta^{-\epsilon}\Norm{f}_{L^2(\mathbb{R}^2)}
\end{align}
for $n>0$ and
\begin{align}\label{finaltild}
\Norm{\sup_{t\in(0, \infty)}|(\chi_{|t^A\cdot|\le 2^{20M\cdot l}}\cdot \tilde{K}_{j}(t^A\cdot))\ast f(x)|}_{L^2(\mathbb{R}^2)}\lesssim_{\epsilon}\delta^{-\epsilon}\Norm{f}_{L^2(\mathbb{R}^2)}.
\end{align}
Combining (\ref{finalkj0}), (\ref{finalkjn}) and (\ref{finaltild}) proves the result with $\overline{M}f(x)$ replaced by (\ref{rando}), and the proof of the proposition is complete.
\end{proof}

Finally, we note that the proof of Proposition \ref{bigmaxprop} implies the following $L^1$ kernel estimate.
\begin{proposition}\label{kerestprop}
There exists a constant $C=C(M, \text{Re}(\lambda_1), \text{Re}(\lambda_2), \Theta(\Omega, A))$ such that for $0<\delta<C$, for every $\epsilon>0$ and for every quadruple $(i, j, m, n)$,
\begin{align*}
\Norm{\sup_{t\approx 2^n}|\psi_t\ast K_{i, j, m, n}|}_{L^1(\mathbb{R}^2)}\lesssim_{\epsilon, M, \text{Re}(\lambda_1), \text{Re}(\lambda_2), \Theta(\Omega, A)}1.
\end{align*}
\end{proposition}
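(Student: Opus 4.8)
The plan is to extract the bound directly from the kernel estimates established inside the proof of Proposition \ref{bigmaxprop}, since that proof already produced pointwise and $L^1$-on-annuli control of precisely the kernels involved. Recall that $K_{i,j,m,n}(x)=\mathcal{F}[\sigma_{i,j,m,n}](x)$, and that by \eqref{sigmadef2} and the nonisotropic scaling relation $\sigma_{i,j,m,n}(\xi)=\sigma_{i,j,m,0}((2^{-n})^A\xi)$ we have $K_{i,j,m,n}(x)=|\det(2^n)^A|\,K_{i,j,m,0}((2^n)^A x)$. The supremum $\sup_{t\approx 2^n}$ rescales compatibly: writing $t=2^n\tau$ with $\tau\approx 1$, the operator $\psi_t\ast K_{i,j,m,n}\ast(\cdot)$ becomes, after the change of variables $x\mapsto (2^{-n})^A x$, the corresponding operator at scale $\tau\approx 1$ acting on $f((2^{-n})^A\cdot)$. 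Since the $L^1$ norm of a kernel is invariant under the substitution $K\mapsto|\det(2^n)^A|K((2^n)^A\cdot)$, it suffices to prove the claim for $n=0$, and (as in the proof of Proposition \ref{bigmaxprop}) we may also fix $i=0$ and drop the $i$-index. So the goal reduces to $\Norm{\sup_{\tau\approx 1}|\psi_\tau\ast K_{j,m,0}|}_{1}\lesssim_\epsilon 1$.

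Next I would reduce to $m=0$: the factor $\psi_m(\rho(\xi))$ in \eqref{sigmadef} localizes $\sigma_{j,m,0}$ to the region $\rho\approx 1+2m\delta$, and the factor $\Psi_0((\tfrac{1-2\delta}{1+2\delta})^{mA}\xi)$ together with this shows $\sigma_{j,m,0}(\xi)=\sigma_{j,0,0}((\tfrac{1-2\delta}{1+2\delta})^{mA}\xi)$ up to the harmless replacement of $\phi_0\psi_0$ by $\phi_0\psi_m$; again this is a nonisotropic dilation by a factor comparable to $1$ (since $N_\delta\le m\le N_\delta'$ keeps $(\tfrac{1-2\delta}{1+2\delta})^{m}$ in a bounded range), which preserves $L^1$ norms of kernels and only shifts the scale parameter $\tau$ by a bounded factor. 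So it is enough to treat $K_{j,0,0}$, i.e. the kernel of the multiplier $\sigma_j$ appearing in \eqref{mult1}. Now split exactly as in the proof of Proposition \ref{bigmaxprop}: $\sigma_j=\phi_0(\rho)\psi_0(\rho)m_j$ with $m_j$ as in \eqref{mult2}, and decompose the kernel of the ``low'' part $K_{j,0,0}\cdot\chi_{|\tau^A\cdot|<2^{10M\cdot l}}$ via the functions $\Phi_n$ of \eqref{Phi0}, \eqref{Phin} into $\tfrac{1}{(2\pi)^2}[\tilde K_j+\sum_{n\ge 0}K_{j,n}]$, and handle the ``high'' part $|\tau^A\cdot|\ge 2^{10M\cdot l}$ using the pieces $\tilde K_1,\tilde K_2$ from the first half of that proof. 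The pointwise bounds \eqref{K2est}, \eqref{K1est}, \eqref{pointkj0}, \eqref{kjnpoint}, \eqref{tildpoint} together with the annulus estimates \eqref{rest3}, \eqref{rest4}, \eqref{kj0est}, \eqref{nint}, \eqref{tildint} give, after summing over the dyadic annuli $\mathcal{A}_k$, a bound $O_\epsilon(\delta^{-\epsilon})$ for $\Norm{K_{j,0,0}}_1$ — but in fact the factor $\delta^{-\epsilon}$ in Proposition \ref{bigmaxprop} came only from the maximal-function step (Proposition \ref{maxprop}), not from the kernel estimates themselves, so summing the $L^1$-on-annuli bounds $\sum_k k2^{-|k-l|}\lesssim l^2\lesssim(\log\delta^{-1})^2$ and absorbing the polylogarithmic loss into $\delta^{-\epsilon}$ gives the clean bound $\Norm{K_{j,0,0}}_1\lesssim_\epsilon 1$.

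Finally I must put the $\sup_{\tau\approx 1}$ back in. This is where one cannot simply use $\Norm{\sup_\tau|\psi_\tau\ast K|}_1\le\Norm{\sup_\tau|\psi_\tau|}_1\Norm{K}_1$ naively, since $\sup_\tau|\psi_\tau|$ need not be integrable; instead one notes that $\mathcal{F}[\psi_\tau]=\phi(\rho(\cdot)/\tau)$ is supported where $\rho\approx\tau\approx 1$, that $\sigma_j$ is supported where $\rho\approx 1$, and that $\psi_\tau\ast K_{j,0,0}=\mathcal{F}[\phi(\rho(\cdot)/\tau)\sigma_j(\cdot)]$. Thus $\psi_\tau\ast K_{j,0,0}$ is the kernel of a multiplier of exactly the same structure as $\sigma_j$ itself (the extra factor $\phi(\rho/\tau)$ is a smooth bump in $\rho$ with derivatives $O(1)$, which is absorbed into the $\phi_0\psi_0$ factors without changing any of the estimates), and the $\tau$-dependence is smooth and supported in a bounded interval, so one obtains the pointwise bound $\sup_{\tau\approx 1}|\psi_\tau\ast K_{j,0,0}(x)|\lesssim$ (sum of the majorants appearing in \eqref{K2est}, \eqref{K1est}, \eqref{pointkj0}, \eqref{kjnpoint}, \eqref{tildpoint}, now with $\tau$-independent constants). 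Integrating this majorant over each $\mathcal{A}_k$ and summing in $k$, exactly as above, gives the claimed $L^1$ bound. The main obstacle is this last step: one has to verify that the maneuvers of Proposition \ref{bigmaxprop} — which were phrased for a fixed multiplier and only later supremized via the Nikodym maximal function — in fact yield a \emph{single} integrable majorant valid uniformly for all $\tau\approx 2^n$, i.e. that the only place the $\sup$ genuinely costs something is the passage to a maximal operator on $L^2$, and not the $L^1$ kernel bound. Once that is recognized the proposition follows by bookkeeping from the estimates already in hand.
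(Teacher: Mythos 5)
Your proposal is correct and takes essentially the same route as the paper: reduce by nonisotropic scaling and affine invariance of the $L^1$ norm to a single representative $(i,j,m,n)$, then observe that the kernel decomposition and pointwise majorants established inside the proof of Proposition \ref{bigmaxprop} (the pieces $\tilde K_1,\tilde K_2,\tilde K_j,K_{j,n}$ and estimates \eqref{K2est}, \eqref{K1est}, \eqref{pointkj0}, \eqref{kjnpoint}, \eqref{tildpoint}) hold uniformly for $t\approx 2^n$ after convolution with $\psi_t$, so that summing the annulus bounds gives a single integrable majorant for the supremum. The paper states exactly this in two sentences without spelling out the reductions; your account fills in the bookkeeping (scaling in $n$, comparability in $m$, and the key point that the $\delta^{-\epsilon}$ loss comes only from the $\log(\delta^{-1})$ factors in the annulus sums, not from the supremum) but introduces no new idea.
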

The above estimate without the supremum follows immediately from the proof of Proposition \ref{bigmaxprop}. We then simply note that all $L^1$ kernel estimates in the proof of Proposition \ref{bigmaxprop} follow from pointwise estimates, which still hold uniformly in $t$ when the kernel is convolved with $\psi_t$.

\section{Littlewood-Paley Inequalities}

%For each $j, m\equiv 0 \mod N$ where $N$ is a sufficiently large integer to be chosen later , we will choose a rectangle $R_{j, m}$ such that $B_{j, m}\subset R_{j, m}$ and $R_{j_1, m_1}\cap R_{j_2, m_2}=\emptyset$ for $j_1, j_2, m_1, m_2\equiv 0 \mod N$, $(j_1, m_1)\ne (j_2, m_2)$. For each $j$, we let $R_{j, 0}$ be the smallest rectangle containing $B_{j, 0}$ with one side tangent to the level set $\{\rho=1\}$ at the left endpoint of the $j^{\text{th}}$ piece, and let $R_{j, m}$ be the appropriate anisotropic dilate of $R_{j, 0}$. We claim that for $N$ sufficiently large, the $R_{j, m}$ will be pairwise disjoint.  Define $P_{j, m}f=(\chi_{R_{j, m}}\hat{f})^{\check{}}.$ Since the sides of the $R_{j, m}$'s are parallel to at most $O(1/\delta^2)$ different directions, it follows by the result of Karagulyan and Lacey that for any $\epsilon>0$, there exists a constant $C_{\epsilon}$ such that

%$$\Norm{\bigg(\sum_j|P_jf|^2\bigg)^{1/2}}_4\le C_{\epsilon}(1/\delta)^{\epsilon}\Norm{f}_4,$$
%where the sum is taken over all $(j, m)$ with $j, m\equiv 0\mod N$. 

The goal of this section is to prove the following proposition, which is an analog of Proposition $4$ from \cite{car2}. As noted in the introduction, the presence of nonisotropic dilations requires a more complicated application of square function estimates than those used in \cite{car2}, where Proposition $4$ is proved by iteratively applying square function estimates with respect to Fourier projections to parallel strips in $\mathbb{R}^2$.

\begin{proposition}\label{lwpprop}

\iffalse{There is a collection of parallelograms $\{R_{i, j, m, n}\}$ such that for every quadruple $(i, j, m, n)$,
\begin{enumerate}
\item If $m=n=0$, then $R_{i, j, m, n}$ is a rectangle,
\item $R_{i, j, m, n}=(2^{2n})^A(\frac{1+2\delta}{1-2\delta})^{mA}R_{i, j, 0, 0}$,
\item $\sigma_{i, j, m, n}$ is supported in $R_{i, j, m, n}$,
\iffalse{\item There is a constant $C=C(M, \text{Re}(\lambda_1), \text{Re}(\lambda_2), \Theta(\Omega, A)>0$ such that no point of $\mathbb{R}^2$ is contained in more than $C$ of the $R_{i, j, m, n}$.}\fi
\item There is a constant $C=C(M, \text{Re}(\lambda_1), \text{Re}(\lambda_2), \Theta(\Omega, A))>0$ such that $R_{i, j, 0, 0}$ has sidelengths $C\delta$ and $C|I_j|$, and the side with length $C|I_j|$ is parallel to the tangent line to $\partial\Omega$ at $\mathcal{R}_i^{-1}(i_j, \gamma_i(i_j))$.
\end{enumerate}
Now suppose $\{R_{i, j, m, n}\}$ is a collection of parallelograms satisfying $(1)$-$(4)$. Let $\phi:\mathbb{R}^2\to\mathbb{R}$ be a smooth function identically $1$ on the unit cube centered at the origin and supported in its double dilate. Let $L_{i, j, m, n}$ be an invertible linear transformation taking the unit cube centered at the origin to $R_{i, j, m, n}$, and set $\phi_{i, j, m, n}(\cdot)=\phi(L_{i, j, m, n}^{-1}(\cdot))$.}\fi
Let $\epsilon>0$. There is $C=C(M, \text{Re}(\lambda_1), \text{Re}(\lambda_2), \Theta(\Omega, A), \epsilon)
\\>0$ such that if $0<\delta<C$, then the following holds. Let $\{\sigma_{i, j, m, n}\}$ be the partition of unity constructed in section \ref{kerest} for the given value of $\delta$. There are smooth functions $\{\phi_{i, j, m, n}\}$ such that $\phi_{i, j, m, n}$ is identically $1$ on the support of $\sigma_{i, j, m, n}$ and so that if we define $\tilde{P}_{i, j, m, n}$ to be the convolution operator whose multiplier is $\phi_{i, j, m, n}$, then
\begin{align}\label{lwpest}
\Norm{\bigg(\sum_{i, j, m, n}|\tilde{P}_{i, j, m, n}f|^2\bigg)^{1/2}}_4\lesssim_{\epsilon}\delta^{-\epsilon}\Norm{f}_4.
\end{align}
\end{proposition}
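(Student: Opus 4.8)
The plan is to follow the iteration scheme of \cite{car2}: we peel off the four indices $i,n,j,m$ of the decomposition of Section \ref{kerest} one at a time, at each stage applying a square function (Littlewood--Paley-type) estimate and reducing to a bound that is uniform in the index just removed. Each reduction uses the almost-disjointness information of Section 4 --- the $N_M'$-overlap of the family $\{B_{i,j,m,n}\}$, Proposition \ref{algdisj}, and Proposition \ref{imp2} --- together with the invariance of the left-hand side of (\ref{lwpest}) under the rotations $\mathcal R_i$ and the nonisotropic dilations $t^A$. As a preliminary one fixes the auxiliary functions: each $\phi_{i,j,m,n}$ is an $L^\infty$-normalized smooth bump adapted to a fixed dilate of $B_{i,j,m,n}$, identically $1$ on $\mathrm{supp}\,\sigma_{i,j,m,n}$, and with the same covariance $\phi_{i,j,m,n}=\phi_{i,j,m,0}((2^{-2n})^A\cdot)$, $\phi_{i,j,m,0}=\phi_{i,j,0,0}((\tfrac{1-2\delta}{1+2\delta})^{mA}\cdot)$ as the $\sigma$'s, so that every reduction below respects the dilation structure.

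\emph{The indices $i$ and $n$.} Since $0\le i\le N_M$ with $N_M$ depending only on $M$, one sums the $L^4$ norms over $i$; applying $\mathcal R_i$ we may assume $i=0$ and drop that index. For fixed $(j,m)$ the Fourier support of $\sigma_{j,m,n}$ lies in the dyadic nonisotropic annulus $\{\rho\approx 2^{2n}\}$, since $\rho(t^A\xi)=t\rho(\xi)$ and $B_{j,m,0}\subset\{\tfrac14\le\rho\le 4\}$, and consecutive such annuli have bounded overlap. Let $\{Q_n\}$ be the Littlewood--Paley family with multipliers $q((2^{-2n})^A\cdot)$, where $q$ is a bump on $\{\tfrac18\le\rho\le 16\}$ equal to $1$ on $\{\tfrac14\le\rho\le 4\}$; by the standard Littlewood--Paley theory for the dilation group $\{t^A\}$ the associated square function and its reverse are bounded on $L^p(\mathbb R^2)$ for $1<p<\infty$. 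Since $\tilde P_{j,m,n}=\tilde P_{j,m,n}Q_n$, the vector-valued form of this inequality reduces (\ref{lwpest}) to the estimate for $\{\tilde P_{j,m,0}\}_{j,m}$, i.e. to the case $n=0$ (general $n$ following by the dilation $(2^{-2n})^A$).

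\emph{The index $j$ --- the directional square function.} Now the relevant Fourier supports lie in $\{\tfrac14\lesssim\rho\lesssim 4\}$, and for each $j$ the union over $m$ of $\mathrm{supp}\,\sigma_{j,m,0}$ is contained in a region $\Pi_j$ comparable to a rectangle of dimensions $\sim|I_j|$ and $\sim 1$, with orientation determined by the $j$-th piece of $\partial\Omega$. Choose smooth $\Theta_j$ with Fourier support in a fixed dilate of $\Pi_j$ and identically $1$ on $\Pi_j$. The key point is the C\'ordoba-type directional square function estimate
\[
\Big\|\big(\textstyle\sum_j |\Theta_j g|^2\big)^{1/2}\Big\|_4\ \lesssim\ (\log\delta^{-1})^{\beta}\,\|g\|_4 ,
\]
which follows, by the usual linearization-and-duality argument, from the nonisotropic Nikodym maximal estimate of Proposition \ref{maxprop} (this is precisely the purpose for which that proposition was established), the overlap of the relevant algebraic sums being controlled by Proposition \ref{algdisj}. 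Since $(\log\delta^{-1})^\beta\lesssim_\epsilon\delta^{-\epsilon}$ and $\tilde P_{j,m,0}=\tilde P_{j,m,0}\Theta_j$, the $\ell^2$-valued form of the displayed inequality reduces matters to the bound
\[
\Big\|\big(\textstyle\sum_m |\tilde P_{j,m,0}f|^2\big)^{1/2}\Big\|_4\ \lesssim_\epsilon\ \delta^{-\epsilon}\,\|f\|_4
\]
uniformly in $j$.

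\emph{The index $m$, and the main obstacle.} Fix $j$. The defining inequalities (\ref{left})--(\ref{right}) of the decomposition $\mathfrak A(\delta)$ say exactly that on the $j$-th angular piece $\partial\Omega$ is flat at scale $\delta$; hence, after an affine change of variables adapted to $(0,j)$, the $\sim\delta^{-1}$ sets $B_{j,m,0}$, $N_\delta\le m\le N_\delta'$, become an essentially disjoint family of boxes of bounded eccentricity, stacked along a single direction inside a fixed tube. The square function over $m$ is therefore dominated by a one-dimensional square function for an essentially disjoint family of intervals, tensored with the identity, which by Rubio de Francia's theorem is bounded on $L^4$ (indeed on $L^p$, $p\ge 2$) with a constant depending only on $M$, $\text{Re}(\lambda_1)$, $\text{Re}(\lambda_2)$ and $\Theta(\Omega,A)$, with no loss in $\delta$; this closes the iteration. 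The main obstacle is the $j$-step: deriving the directional square function estimate from the maximal bound in the present nonisotropic setting with a rough $\partial\Omega$, and --- more seriously --- making the vector-valued reductions close, since the operators $\tilde P_{j,m,0}$ are adapted to parallelograms of eccentricity as large as $\delta^{-1}$, so that one must keep careful track of which decompositions genuinely decouple and verify that all accumulated overlap losses remain only poly-logarithmic in $\delta^{-1}$. This is the ``more intricate decomposition $\ldots$ and more sophisticated use of Littlewood--Paley inequalities'' anticipated in the introduction.
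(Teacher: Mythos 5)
Your high-level plan (peel off $i$, then $n$, then the angular indices, at each stage invoking a weighted square-function estimate controlled by the maximal operator of Proposition~\ref{maxprop}) is in the right spirit, and your treatment of $i$ and $n$ is fine.  But the $m$-step as you have written it is incorrect precisely in the nonisotropic case, and this is the crux of the proposition.

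You assert that, for fixed $j$, ``after an affine change of variables adapted to $(0,j)$, the $\sim\delta^{-1}$ sets $B_{j,m,0}$ become an essentially disjoint family of boxes of bounded eccentricity, stacked along a single direction inside a fixed tube,'' and then dispatch the $m$-sum by Rubio de~Francia.  This is only true when $A$ is (a multiple of) the identity.  In general $B_{j,m,0}=\bigl(\tfrac{1+2\delta}{1-2\delta}\bigr)^{mA}B_{j,0,0}$, and if $A$ has complex eigenvalues or is nondiagonalizable the maps $\bigl(\tfrac{1+2\delta}{1-2\delta}\bigr)^{mA}$ rotate and/or shear: as $m$ runs over its $\approx\delta^{-1}$ values, the long axis of $B_{j,m,0}$ turns through an angle of size $O(1)$, since $t^A$ for $t\in[1/2,2]$ already contains an $O(1)$ rotation.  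There is thus no single affine map that renders the whole family $\{B_{j,m,0}\}_m$ parallel, and the reduction to a one--dimensional square function for disjoint intervals fails.  This is exactly the obstruction the paper flags in the sentence ``unlike the isotropic case, the orbits $\{t^A\xi:\,t>0\}$ need not be straight lines, and thus for fixed $j$ the supports of the $\sigma_{j,m,0}$ may only be approximated by rectangles with axes whose directions change as $m$ varies.''

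What the paper does instead is fix a dyadic size $|I_j|\approx 2^a$, observe that within a \emph{block} of $\lesssim 2^{-a}$ consecutive values of $m$ the directions change by $\lesssim 2^{-a}\delta$ (so that block is contained in a union of parallel $\delta$-strips), and then group the $m$-index into a nested tree $\mathfrak M_{i_1}\supset\mathfrak M_{i_1,i_2}\supset\cdots\supset\mathfrak M_{i_1,\ldots,i_N}$, each level corresponding to rectangles of width $2^{ka}$ whose orientations are coherent within a level.  Corollary~\ref{lwpcor} --- the weighted Carleson/Lee--Rogers--Seeger lemma, not a stand-alone C\'ordoba directional square-function estimate --- is then applied $N\approx\log(1/\delta)/a$ times, once per level of the tree, accumulating one nonisotropic maximal operator on the weight per step; the $\log$-many maximal functions are controlled by Proposition~\ref{maxprop} to give the final $\delta^{-\epsilon}$ loss.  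Your proposal collapses this entire tree into a single Rubio de~Francia step, which is where the argument breaks.  Also note the paper never proves an $L^4$ directional square-function bound over $j$ by linearization/duality; the $j$ and $i_1$ indices are removed together at the last stage of the same weighted-$L^2$ iteration.
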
 

To prove Proposition \ref{lwpprop}, we will need the following lemma, which was originally due to Carleson. A proof can be found in \cite{lrs} (Lemma $4.4$). We state the lemma in full generality, although we will only need the special case $d=2$.
\begin{lemma}\label{carllemma}
Let $A$ be an invertible linear transformation on $\mathbb{R}^d$ and $A^t$ its transpose. Suppose that $\{m_k\}_{k\in\mathbb{N}}$ are bounded, measurable functions on $\mathbb{R}^d$ with disjoint supports. Let $w$ be a bounded, measurable function on $\mathbb{R}^d$. Then for $s\ge 0$ and $f\in\mathcal{S}(\mathbb{R}^d)$, 
\begin{multline*}
\int\sum_k|\mathcal{F}^{-1}[m_k(A^t\cdot)\hat{f}](x)|^2w(x)\,dx
\\
\le C\sup_k\Norm{m_k}_{L_s^2(\mathbb{R}^d)}^2\int\int\frac{\det(A^{-1})}{(1+|A^{-1}y|^s)^2}|f(x-y)|^2\,dy\,w(x)\,dx.
\end{multline*}
\end{lemma}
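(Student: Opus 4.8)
The plan is to reduce to the case $A=\mathrm{Id}$ by a linear change of variables, and then to prove a \emph{pointwise} version of the inequality which, integrated against $w$, gives the lemma.

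For the reduction, write $T_k g:=\mathcal{F}^{-1}[m_k\widehat{g}]$ and $T_k^A g:=\mathcal{F}^{-1}[m_k(A^t\cdot)\widehat{g}]$, and put $F:=f\circ A$, so that $\widehat{F}(\xi)=|\det A|^{-1}\widehat{f}(A^{-t}\xi)$. The substitution $\eta=A^t\xi$ in the defining integral yields the identity $T_k^A f(x)=T_kF(A^{-1}x)$. Changing variables $u=A^{-1}x$ in $\int\sum_k|T_k^A f(x)|^2 w(x)\,dx$ converts the general-$A$ inequality into the $A=\mathrm{Id}$ inequality applied to $F$ with the weight $u\mapsto w(Au)$; changing variables back ($x=Au$, $y=Av$) in the resulting right-hand side produces exactly the factor $\det(A^{-1})$ and the argument $|A^{-1}y|$ in the statement, and $\|m_k\|_{L^2_s}$ is unchanged. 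We may also assume $w\ge 0$. Thus it suffices to prove the pointwise bound
\[
\sum_k\bigl|\mathcal{F}^{-1}[m_k\widehat{f}](x)\bigr|^2\;\lesssim_{s,d}\;\Bigl(\sup_k\|m_k\|_{L^2_s}^2\Bigr)\int_{\mathbb{R}^d}\frac{|f(x-y)|^2}{(1+|y|^s)^2}\,dy
\]
and then integrate against $w(x)$.

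I would first establish this pointwise bound when $s$ is an even positive integer with $2s>d$ (which already covers the applications). Write $\check{m}_k:=\mathcal{F}^{-1}[m_k]$, so $\mathcal{F}^{-1}[m_k\widehat{f}](x)=\int f(x-y)\check{m}_k(y)\,dy$. Set $F_x(y):=f(x-y)(1+|y|^s)^{-1}$ and $G_k(y):=(1+|y|^s)\overline{\check{m}_k(y)}$, so that $\int f(x-y)\check{m}_k(y)\,dy=\langle F_x,G_k\rangle_{L^2(dy)}$. Since $s$ is even, $1+|y|^s$ is a polynomial, so multiplication by it corresponds under the Fourier transform to a constant-coefficient differential operator; hence $\widehat{G_k}$ is supported in $-\supp(m_k)$, and in particular the $\{\widehat{G_k}\}$ have pairwise disjoint supports. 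By Plancherel together with Bessel's inequality for a disjointly supported family,
\[
\sum_k|\langle F_x,G_k\rangle|^2=\sum_k|\langle\widehat{F_x},\widehat{G_k}\rangle|^2\le\Bigl(\sup_k\|\widehat{G_k}\|_2^2\Bigr)\,\|\widehat{F_x}\|_2^2 .
\]
Since $\|\widehat{G_k}\|_2\asymp\|G_k\|_2=\|(1+|\cdot|^s)\check{m}_k\|_2\asymp_{s}\|m_k\|_{L^2_s}$ and $\|\widehat{F_x}\|_2^2\asymp\|F_x\|_2^2=\int|f(x-y)|^2(1+|y|^s)^{-2}\,dy$, this is the desired pointwise inequality, and integrating it against $w$ proves the lemma for even $s$.

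For general $s\ge 0$ (say $2s>d$), the cleanest route is to interpolate the resulting bilinear estimate in the parameter $s$ between two even integers $s_0<s<s_1$: one has $L^2_s=[L^2_{s_0},L^2_{s_1}]_\theta$, and since $s\mapsto\log(1+|y|^s)$ is convex for each fixed $y$ the weights $(1+|y|^s)^{\pm2}$ form an admissible analytic family, so Stein interpolation applies with comparable constants. The one genuinely delicate point — and the reason the even-integer case is singled out — is precisely the support statement for $\widehat{G_k}$: for odd or non-integer $s$, $(-\Delta)^{s/2}$ is nonlocal, so $\widehat{G_k}$ need not be supported in $\supp(m_k)$ and the disjointness (hence the Bessel step) breaks down; the interpolation step (or, alternatively, approximating $1+|y|^s$ from below by polynomials combined with a Littlewood--Paley argument) is what circumvents this. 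Everything else — the change of variables, Plancherel, and Bessel's inequality — is soft.
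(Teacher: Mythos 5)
The paper does not prove this lemma: it states it as originally due to Carleson and refers the reader to Lemma~4.4 of \cite{lrs} for a proof, so there is no in-text argument to compare against. Your reduction to $A=\mathrm{Id}$ is correct, and your Plancherel/Bessel argument when $s$ is an even integer is a correct, self-contained proof of that case: the key point that $1+|y|^s$ is then a polynomial, so that multiplication by it is a constant-coefficient differential operator on the Fourier side and the $\widehat{G_k}$ inherit pairwise disjoint supports, is exactly the right mechanism. Since in this paper Corollary~\ref{lwpcor} is invoked with a fixed large $s$ (whose precise value is immaterial and may be taken even), your argument already covers what the paper actually uses.

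The extension to general $s\ge 0$ by Stein interpolation, however, does not go through as stated, and this is a genuine gap. The hypothesis controls only $N_s:=\sup_k\|m_k\|_{L^2_s}$, whereas the interpolated bound is phrased in terms of the endpoint quantities. Applying Stein interpolation to the analytic family $T_zg:=\bigl(\langle g,(1+|\cdot|^2)^{z/2}\check m_k\rangle\bigr)_k$ on the strip $s_0\le\Re z\le s_1$ does give $\|T_sg\|_{\ell^2}\le C\,N_{s_0}^{1-\theta}N_{s_1}^{\theta}\|g\|_2$ (the extra factor $(1+|\cdot|^2)^{i\tau/2}$ has modulus one, so the even-$s$ bound transfers to the whole boundary line), but this cannot be converted into $\le C\,N_s\|g\|_2$: by log-convexity of $\sigma\mapsto\|m_k\|_{L^2_\sigma}$ one always has $N_{s_0}^{1-\theta}N_{s_1}^{\theta}\ge N_s$, and the ratio is unbounded. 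For instance, take $m_k(\xi)=c_k\,\phi(L_k(\xi-k))$ with $L_k\to\infty$ (disjoint supports in $\xi$), with $c_k$ chosen so that $\|m_k\|_{L^2_s}=1$; then $\check m_k$ lives at scale $L_k$ and $N_{s_1}=\sup_k\|m_k\|_{L^2_{s_1}}=\infty$ for every $s_1>s$, so the endpoint bound is vacuous. This is exactly the delicate point you flag, but interpolation does not circumvent it, and the alternative Littlewood--Paley/polynomial-approximation route you mention is not carried out. As written, your proof establishes the lemma only for even $s$; the general case requires a different argument, such as the one in \cite{lrs}.
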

We state an immediate corollary of this lemma, which we will apply repeatedly in the proof of Proposition \ref{lwpprop}.

\begin{corollary}\label{lwpcor}
Suppose that $\{m_k\}_{k\in\mathbb{Z}}$ are disjoint translates of a smooth compactly supported function adapted to the unit cube in $\mathbb{R}^2$, with the distance between the supports of the $m_k$ at least $O(1)$. Let $R_{\theta}$ be the matrix of rotation by $\theta$ degrees, and for $n\in\mathbb{Z}$ put $A_{n, \theta}=((2^n)^AR_{\theta}(\begin{smallmatrix}\lambda&0\\ 0&\lambda N\end{smallmatrix}))^t$. Then for any $n$, $\theta$ and for any $s>0$,
\begin{align*}
\int\sum_k|\mathcal{F}^{-1}[m_k(A_{n, \theta}^t\cdot)\hat{f}](x)|^2w(x)\,dx\le C\int|f(x)|^2\mathcal{M}_{\lambda, N}w(x)\,dx,
\end{align*}
where $\mathcal{M}_{\lambda, N}:=\sum_{i=0}^{\infty}2^{-i}M_{2^i\lambda, N}$.
\end{corollary}

\begin{proof}[Proof of Proposition \ref{lwpprop}]
Without loss of generality, we may restrict the sum in (\ref{lwpest}) to $i=0$, and so in what follows we will assume $i=0$ and drop the $i$-index. Also, in what follows we will say a collection $\mathcal{R}$ of subsets of $\mathbb{R}^2$ is \textit{almost disjoint} if there is a constant $C=C(M, \text{Re}(\lambda_1), \text{Re}(\lambda_2), \Theta(\Omega, A))>0$ such that every point of $\mathbb{R}^2$ is contained in at most $C$ elements of $\mathcal{R}$. 
\newline
\indent
The main difficulty here introduced by nonisotropic dilations is that unlike the isotropic case, the orbits $\{t^A\xi:\,t>0\}$ need not be straight lines, and thus for fixed $j$ the supports of the $\sigma_{j, m, 0}$ may only be approximated by rectangles with axes whose directions change as $m$ varies. To deal with this difficulty, we group the supports of the $\sigma_{j, m, 0}$ into nested subcollections each of which can be approximated by rectangles with long axes in a single direction, and iteratively apply Corollary \ref{lwpcor}.
\newline
\indent
Note that since $|\delta|\lesssim|I_j|\lesssim 1$, there are $\lesssim\log(1/\delta)$ dyadic intervals $[2^a, 2^{a+1}]$ with $a\le 0$ and $a\in\mathbb{Z}$ such that $2^a\le |I_j|\le 2^{a+1}$ for some $j$, and so if we let $\mathfrak{J}_a=\{j: |I_j|\in [2^a, 2^{a+1}]\}$, we may restrict the sum in $j$ in (\ref{lwpest}) to $\mathfrak{J}_a$ for a single fixed value of $a$, as long as all our estimates are uniform in $a$. By incurring a factor of $\delta^{-\epsilon}$, we may assume that $2^a\le\delta^{-\epsilon}$.
\newline
\indent
Having fixed $a$, we are now ready to construct for each fixed $j$ our nested subcollections of indices $m$. The idea is that for a fixed $j$ and a fixed $m$, the support of $\sigma_{j, m, 0}$ is essentially a $2^a\times\delta$ rectangle, and the support of $\sigma_{j, m^{\prime}, 0}$ for $m^{\prime}$ for $|m^{\prime}-m|\lesssim 2^{-a}$ is contained in a $2^a\times\delta$ rectangle whose direction differs by at most $\lesssim 2^{-a}\delta$. Thus the supports of the functions $\{\sigma_{j, m^{\prime}, 0}\}_{|m-m^{\prime}|\lesssim 2^{-a}}$ are contained in almost disjoint parallel strips of width $\approx\delta$. For such a collection of rectangles, Corollary \ref{lwpcor} may be applied. The union of such rectangles is essentially a $2^{a}\times 2^{-a}\delta$ rectangle. We now iterate this process, grouping together successive $2^{a}\times 2^{-a}\delta$ rectangles whose direction does not change too much to obtain a rectangle of smaller eccentricity. We continue this process until we obtain a $2^a\times 2^a$ square, and then we may apply Corollary \ref{lwpcor}.
\newline
\indent
The nested subcollection of indices $m$ will be constructed ``backwards" with respect to the process described in the previous paragraph. The number of stages required by the process is $N$, where $N$ is the least integer such that $2^{aN}\le\delta$. For each $1\le k\le N$, we will define a collection of indices $m$ denoted by $\mathfrak{M}_{i_1, \ldots, i_k}$, so that $\mathfrak{M}_{i_1, \ldots, i_{k+1}}\subset\mathfrak{M}_{i_1, \ldots, i_k}$ and so that $\mathfrak{M}_{i_1, \ldots, i_N}$ contains at most one element. For each $(i_1, \ldots, i_k)\in\mathbb{Z}^k$, inductively define 
\begin{align*}
\mathfrak{M}_{i_1}=\{m:\, i_1\floor{2^a\delta^{-1}}\le m<(i_1+1)\floor{2^a\delta^{-1}}\},
\end{align*}
\begin{multline*}
\mathfrak{M}_{i_1,\ldots, i_k}=\mathfrak{M}_{i_1, \ldots, i_{k-1}}\cap\{m:\,\sum_{1\le l\le k}i_l\floor{2^{al}\delta^{-1}}\le m
\\\le \sum_{1\le l\le k}i_l\floor{2^{al}\delta^{-1}}+\floor{2^{ak}\delta^{-1}}\}.
\end{multline*}
Then for every $N$-tuple $(i_1, \ldots, i_N)$, $\mathfrak{M}_{i_1, \ldots, i_N}$ contains at most one element.
\newline
\indent
Now let $C=C(M, \text{Re}(\lambda_1), \text{Re}(\lambda_2), \Theta(\Omega, A)>0$ be sufficiently large. There is a collection $\{Q_{j, i_1}\}$ of almost-disjoint cubes of sidelength $C2^a$ such that if $m\in\mathfrak{M}_{i_1, \ldots, i_N}$ then the support of $\sigma_{j, m, 0}$ is contained in $Q_{j, i_1}$. Since $\sup_{\rho(\xi)\le 8}|\nabla\rho(\xi)|\lesssim1$, there is a constant $C>0$ such that for every $j, i_1$ we may cover $Q_{j, i_1}$ with almost disjoint parallel rectangles $R_{j, i_1, i_2}$ of width $C2^{2a}$ and length $1$ so that for every $i_2$, 
\begin{align*}
\bigcup_{m\in\mathfrak{M}_{i_1, i_2}}\text{supp}(\sigma_{j, m, 0})\subset\bigcup_r(R_{j, i_1, i_2}\cap Q_{j, i_1}).
\end{align*}
Repeating this process, for every $2\le k\le N$ and every $k$-tuple $(i_1, \ldots, i_{k-1})$ we obtain almost disjoint parallel rectangles $R_{j, i_1, \ldots, i_k}$ of width $C2^{ka}$ and length $1$ so that for every $i_k$,
\begin{align*}
\bigcup_{m\in\mathfrak{M}_{i_1, \ldots, i_k}}\text{supp}(\sigma_{j, m, 0})\subset(R_{j, i_1, \ldots, i_k}\cap\ldots\cap R_{j, i_1, i_2}\cap Q_{j, i_1}).
\end{align*}
As noted previously, in the case $k=N$, $\bigcup_{m\in\mathfrak{M}_{i_1, \ldots, i_k}}\text{supp}(\sigma_{j, m, 0})$ contains at most one element.
\newline
\indent
Let $\phi:\mathbb{R}^2\to[0, 1]$ be a smooth function that is identically $1$ on the unit cube centered at the origin and supported in its double dilate. If $R$ is any nonisotropic dilate of a rectangle, let $L_R$ be the affine transformation taking $R$ to the unit cube centered at the origin. It follows that if $m\in\mathfrak{M}_{i_1, \ldots, i_N}$, then
\begin{align*}
\text{supp}({\sigma_{j, m, 0}})\subset\{x: \phi(L_{Q_{j, i_1}}x)\prod_{l=2}^N\phi(L_{R_{j, i_1,\ldots, i_l}}x)=1\},
\end{align*}
and so
\begin{align*}
\text{supp}({\sigma_{j, m, n}})\subset\{x: \phi(L_{Q_{j, i_1}}(2^{-n})^Ax)\prod_{l=2}^N\phi(L_{R_{j, i_1,\ldots, i_l}}(2^{-n})^Ax)=1\}.
\end{align*}
Now for each $j, i_1$ let $\psi_{Q_{j, i_1}}$ be a smooth function supported in $4Q_{j, i_1}$ and identically $1$ on $Q_{j, i_1}$, so that for each $j$,
\begin{align*}
\psi_{Q_{j, i_1}}(x)=1,\qquad x\in\bigcup_{m\in\mathfrak{M}_{i_1}}\text{supp}(\sigma_{j, m, 0}).
\end{align*}
For each $(j, m, n)$, let $(i_1, \ldots, i_N)$ be the unique $N$-tuple such that $m\in\mathfrak{M}_{i_1, \ldots, i_N}$, and let
\begin{align*}
\phi_{j, m, n}=\psi_{Q_{j, i_1, r}}((2^{-n})^Ax)\prod_{l=2}^N\phi(L_{R_{j, i_1, \ldots, i_l}, r}(2^{-n})^Ax)
\end{align*}
Let $\tilde{P}_{j, m, n}$ denote the convolution operator with multiplier $\phi_{j, m, n}$. Let $\phi:\mathbb{R}\to [0, 1]$ be a smooth function supported in $(1/4, 4)$ that is identically $1$ on $[1/2, 2]$, and let $P_n$ denote the convolution operator with multiplier $\phi(2^{-n}\rho(\cdot))$. Given an $N$-tuple of indices $(i_1, \ldots, i_N)$, let $m(i_1, \ldots, i_N)$ denote the unique value of $m$ such that $m\in\mathfrak{M}_{i_1, \ldots, i_N}$, and let $m(i_1, \ldots, i_N)$ be undefined otherwise. Let $S_{j, i_1}$ denote the convolution operator with multiplier
\begin{align*}
\psi_{Q_{j, i_1, r}}((2^{-n})^A\cdot).
\end{align*}
For $2\le k\le N$, let $S_{j, i_1, \ldots, i_k, n}$ denote the convolution operator with multiplier
\begin{align*}
\psi_{Q_{j, i_1}}((2^{-n})^AL_{j, i_1, r}\cdot)\phi(L_{R_{j, i_1, \ldots, i_k}}(2^{-n})^A\cdot).
\end{align*}
Then since each index $m$ is contained in at most one $N$-tuple $(i_1, \ldots, i_N)$, it follows that
\begin{multline*}
\int\sum_{j, m, n}|\tilde{P}_{j, m, n}f(x)|^2\,w(x)\,dx=
\\
\int\sum_n\sum_j\sum_{(i_1, \ldots, i_N)}|S_{j, i_1, \ldots, i_N}(\ldots (S_{j, i_1}(P_nf(x))|^2\,w(x)\,dx.
\end{multline*}
Repeatedly applying Corollary \ref{lwpcor}, we have
\begin{multline}\label{longthing}
\int\sum_{j, m, n}|\tilde{P}_{j, m, n}f(x)|^2\,w(x)\,dx
\\
\lesssim_{\epsilon}
\int\sum_n\sum_j\sum_{i_1, \ldots, i_{N-1}} |S_{j, i_1, \ldots, i_{N-1}}(\cdots (S_{j, i_1}(P_nf(x)))\cdots)|^2\,
\\
\times\mathcal{M}_{1, (2^{Na}\delta^{N\epsilon})^{-1}}w(x)\,dx
\\
\lesssim_{\epsilon}\int\sum_n\sum_j\sum_{i_1} |S_{j, i_1, }(P_nf(x))|^2\,\mathcal{M}_{1, (2^{2a}\delta^{2\epsilon})^{-1}}(\cdots
\\(\mathcal{M}_{1, (2^{Na}\delta^{N\epsilon})^{-1}}w(x))\cdots)\,dx
\\
\lesssim_{\epsilon}
\int\sum_n|P_nf(x)|^2\,\mathcal{M}_{2^{-a}\delta^{-\epsilon}, 1}(\mathcal{M}_{1, (2^{2a}\delta^{2\epsilon})^{-1}}(\cdots
\\(\mathcal{M}_{1, (2^{Na}\delta^{N\epsilon})^{-1}}w(x))\cdots))\,dx
\\
\lesssim_{\epsilon}\delta^{-\epsilon}\Norm{\bigg(\sum_n|P_nf|^2\bigg)^{1/2}}_4^2
\\\times\Norm{\mathcal{M}_{2^{-a}\delta^{0\epsilon}, 1}(\mathcal{M}_{1, (2^{2a}\delta^{\epsilon})^{-1}}(\cdots(\mathcal{M}_{1, (2^{Na}\delta^{N\epsilon})^{-1}}w)\cdots))}_2.
\end{multline}
By Proposition \ref{maxprop}, we have
\begin{multline}\label{longthing2}
\Norm{\mathcal{M}_{2^{-a}\delta^{0\epsilon}, 1}(\mathcal{M}_{1, (2^{2a}\delta^{\epsilon})^{-1}}(\cdots(\mathcal{M}_{1, (2^{Na}\delta^{N\epsilon})^{-1}}w)\cdots))}_2
\\\lesssim_{\epsilon}\delta^{-\epsilon}\Norm{w}_2.
\end{multline}
Since the operator $f\mapsto\bigg(\sum_n|P_nf|^2\bigg)^{1/2}$ corresponds to a vector-valued singular integral on the space of homogeneous type given by nonisotropic balls and Lebesgue measure with all associated constants $\lesssim1$, we have
\begin{align}\label{singint}
\Norm{\bigg(\sum_n|P_nf|^2\bigg)^{1/2}}_4\lesssim\Norm{f}_4.
\end{align}
Combining (\ref{longthing}), (\ref{longthing2}) and (\ref{singint}), we have
\begin{align}\label{lasthing}
\int\sum_{j, m, n}|\tilde{P}_{j, m, n}f(x)|^2\,w(x)\,dx\lesssim_{\epsilon}\delta^{-\epsilon}\Norm{f}_4^2\Norm{w}_2,
\end{align}
and the result follows by duality.
\end{proof}

\section{Proof of the main theorem}
In this section, we combine the ingredients developed in previous sections to prove Proposition \ref{mainprop}. The argument will closely follow \cite{car2}. As noted previously, we only need prove Proposition \ref{mainprop} in the case that $\Omega$ has smooth boundary, with a constant depending only on $M, \text{Re}(\lambda_1), \text{Re}(\lambda_2), \Theta(\Omega, A), \epsilon$.

\begin{proof}[Proof of Propostion \ref{mainprop}]
Let $Sf(x)=\big(\int_0^{\infty}|\psi_t\ast f(x)|^2\frac{dt}{t}\big)^{1/2}$. Let $\mathfrak{S}$ be the nonisotropic sector bounded by the orbits $\{t^A\xi:\,t>0\}$ and $\{t^A\xi^{\prime}:\,t>0\}$, where $\xi=(\xi_1, \xi_2)$ is the unique point in $\partial\Omega$ with $\xi_1=-1/8$ and $\xi_2>0$ and $\xi^{\prime}=(\xi_1^{\prime}, \xi_2^{\prime})$ is the unique point in $\partial\Omega$ with $\xi_1=1/8$ and $\xi_2>0$. Assume without loss of generality that $\hat{f}$ is supported in $\mathfrak{S}$. By incurring a factor of $\log(1/\delta^{N(M, \text{Re}(\lambda_1), \text{Re}(\lambda_2))})$ we may restrict the domain of integration in $t$ to the set 
\begin{align*}
E=\bigcup_{\substack{n\equiv 0\mod\\\log(1/\delta^{N(M, \text{Re}(\lambda_1), \text{Re}(\lambda_2))})}}(2^n, 2^{n+1}],
\end{align*} 
where $N(M, \text{Re}(\lambda_1), \text{Re}(\lambda_2))$ is as in Proposition \ref{imp2}.
Now, if $u, t\in E$ with $u<t$, then either $u, t$ are contained in the same dyadic interval and $u/t>1/2$, or $u, t$ are contained in distinct dyadic intervals, and $u/t<1/\delta^{N(M, \text{Re}(\lambda_1), \text{Re}(\lambda_2))}$. Using Plancherel, we have
\begin{multline*}
\Norm{Sf}_4^4=\int\bigg|\int_0^{\infty}|\psi_t\ast f(x)|^2\frac{dt}{t}\bigg|^2\,dx=
\\
\int\int_0^{\infty}\int_{0}^{\infty}|\psi_t\ast f(x)|^2|\psi_u\ast f(x)|^2\frac{dt}{t}\frac{du}{u}\,dx=
\\\int_0^{\infty}\int_0^{\infty}\int|(\phi(\frac{\rho(\cdot)}{t})\hat{f}(\cdot))\ast(\phi(\frac{\rho(\cdot)}{u})\hat{f}(\cdot))(\xi)|^2\,d\xi\frac{dt}{t}\frac{du}{u}.
\end{multline*}
Restricting the integration in $t$ and $u$ to $E$, we have
\begin{multline*}
\int_E\int_E\int|(\phi(\frac{\rho(\cdot)}{t})\hat{f}(\cdot))\ast(\phi(\frac{\rho(\cdot)}{u})\hat{f}(\cdot))(\xi)|^2\,d\xi\frac{dt}{t}\frac{du}{u}\lesssim
\\
\bigg(\int\int_{1/2<t/u<2}+\int\int_{u/t<\delta^{-N(M, \text{Re}(\lambda_1), \text{Re}(\lambda_2))}}\bigg)
\int|(\phi(\frac{\rho(\cdot)}{t})\hat{f}(\cdot))
\\
\ast(\phi(\frac{\rho(\cdot)}{u})\hat{f}(\cdot))(\xi)|^2\,d\xi.
\end{multline*}
Using Propositions \ref{algdisj} and \ref{imp2}, for every $\epsilon>0$ we can essentially bound this by
\begin{multline*}
\delta^{-\epsilon}\bigg(\int_0^{\infty}\int_0^{\infty}\int\sum_{\substack{j, m, n\\ j^{\prime}, m^{\prime}, n^{\prime}}}|(\sigma_{0, j, m, n}(\cdot)\phi(\frac{\rho(\cdot)}{t})\hat{f}(\cdot))
\\
\ast(\sigma_{0, j^{\prime}, m^{\prime}, n^{\prime}}(\cdot)\phi(\frac{\rho(\cdot)}{u})\hat{f}(\cdot))(\xi)|^2\,d\xi\,\frac{dt}{t}\,\frac{du}{u}
\\
+\int_0^{\infty}\int_0^{\infty}\int\sum_{j^{\prime}, m^{\prime}, n^{\prime}}|(\phi(\frac{\rho(\cdot)}{t})\hat{f}(\cdot))\ast(\sigma_{0, j^{\prime}, m^{\prime}, n^{\prime}}(\cdot)\phi(\frac{\rho(\cdot)}{u})\hat{f}(\cdot))(\xi)|^2\,d\xi\,\frac{dt}{t}\,\frac{du}{u}\bigg).
\end{multline*}
Let 
\begin{align*}
Tf(x)=\bigg(\int_0^{\infty}\sum_{j, m, n}|\mathcal{F}[\sigma_{0, j, m, n}(\cdot)\phi(\frac{\rho(\cdot)}{t})\hat{f}(\cdot)](x)|^2\frac{dt}{t}\bigg)^{1/2}.
\end{align*} 
Then the above implies that
\begin{align*}
\Norm{Sf}_4^4\lesssim_{\epsilon}\delta^{-\epsilon}(\Norm{Tf}_4^4+\Norm{Sf}_4^2\Norm{Tf}_4^2),
\end{align*}
which implies 
\begin{align}\label{st}
\Norm{Sf}_4\lesssim_{\epsilon}\delta^{-\epsilon}\Norm{Tf}_4.
\end{align}
Using Proposition \ref{imp3}, we have
\begin{multline*}
\Norm{Tf}_4=\bigg(\int\bigg|\int_0^{\infty}\sum_{j, m, n}|\mathcal{F}[\sigma_{0, j, m, n}(\cdot)\phi(\frac{\rho(\cdot)}{t})\hat{f}(\cdot)](x)|^2\frac{dt}{t}\bigg|^{2}\,dx\bigg)^{x1/4}
\\
=\bigg(\int\bigg|\sum_{ j, m, n}\int_0^{\infty}|((\psi_t\ast K_{0, j, m, n})\ast(\tilde{P}_{0, j, m, n}f))(x)|^2\frac{dt}{t}\bigg|^{2}\,dx\bigg)^{1/4}
\\
\lesssim\delta^{1/2}\bigg(\int\bigg|\sum_{j, m, n}\sup_{t\approx 2^n}|(\psi_t\ast K_{0, j, m, n})\ast(\tilde{P}_{0, j, m, n}f)(x)|^2\bigg|^{2}\,dx\bigg)^{1/4}
\\
=\delta^{1/2}\Norm{\bigg(\sum_{j, m, n}\sup_{t\approx 2^n}|(\psi_t\ast K_{0, j, m, n})\ast(\tilde{P}_{0, j, m, n}f)(x)|^2\bigg)^{1/2}}_4.
\end{multline*}
Now let $\omega\in\mathcal{S}(\mathbb{R}^2)$ with $\Norm{\omega}_{L^2(\mathbb{R}^2)}=1$. We have
\begin{multline*}
\int\sum_{j, m, n}\sup_{t\approx 2^n}|\psi_t\ast K_{0, j, m, n}\ast \tilde{P}_{0, j, m, n}f(x)|^2\omega(x)\,dx
\\
\lesssim\int\sum_{j, m, n}\Norm{\sup_{t\approx 2^n}|\psi_t\ast K_{0, j, m, n}|}_1|\tilde{P}_{0, j, m, n}f(x)|^2|\sup_{t\approx 2^n}|\psi_t\ast K_{0, j, m, n}\ast w(x)||\,dx
\\
\lesssim_{\epsilon}\delta^{-\epsilon}\int\sum_{j, m, n}|\tilde{P}_{0, j, m, n}f(x)|^2\overline{M}\omega(x)\,dx\\\lesssim_{\epsilon}\Norm{\bigg(\sum_{j, m, n}|\tilde{P}_{0, j, m, n}f(x)|^2\bigg)^{1/2}}_4^2\Norm{\overline{M}w}_2\lesssim_{\epsilon}\delta^{-\epsilon}\Norm{f}_4^2,
\end{multline*}
where in the second inequality we have used Proposition \ref{kerestprop} and in the last inequality we have used Propositions \ref{bigmaxprop} and \ref{lwpprop}. Using (\ref{st}) and taking the supremum over all such weights $\omega$, we have
\begin{align*}
\Norm{Sf}_4\lesssim_{\epsilon}\delta^{-\epsilon}\Norm{Tf}_4\lesssim_{\epsilon}\delta^{1/2-\epsilon}\Norm{f}_4.
\end{align*}
\end{proof}

\begin{proof}[Proof that Proposition \ref{mainprop} implies Theorem \ref{mainthm}]
Let $C$ be as in the statement of Proposition \ref{mainprop}. We will now decompose the Bochner-Riesz multipliers in a standard fashion. Let $\phi_0:\mathbb{R}\to\mathbb{R}$ be a smooth function identically $1$ on $[-1, 1]$ and supported in $[-2, 2]$ so that $\phi_0(|\cdot|)$ is a radial, decreasing function on $\mathbb{R}^2$. It is easy to see that we can find smooth functions $\phi_1:\mathbb{R}\to\mathbb{R}$ and $\phi_2:\mathbb{R}\to\mathbb{R}$ satisfying the following:
\begin{enumerate}
\item For each $k\ge 0$, 
\begin{align*}
|D^k\phi_1(x)|\lesssim_{k}1,
\end{align*}
\begin{align*}
|D^k\phi_2(x)|\lesssim_{k}1,
\end{align*}
\item There is a constant $C^{\prime}>0$ such that $\phi_1$ is supported in $[C^{\prime}, 1]$,
\item We can write
\begin{multline*}
(1-\rho(\xi))^{\lambda}=\phi_0(2^{2M}|\xi|)+(\phi_0(2^{-2M}|\xi|)-\phi_0(2^{2M}|\xi|))\phi_1(\rho(\xi))
\\
+\sum_{k=\lceil{\log(C)}\rceil}^{\infty}2^{-k\lambda}\phi_2(2^k(1-\rho(\xi))).
\end{multline*}
\end{enumerate}
By the triangle inequality,
\begin{multline*}
\Norm{G^{\lambda}f}_{4}\lesssim\Norm{\bigg(\int_0^{\infty}\bigg|\mathcal{F}^{-1}[\phi_0(2^{2M}t^{-1}|\cdot|)]\ast f(x)\bigg|^2\,\frac{dt}{t}\bigg)^{1/2}}_{4}
\\
+\Norm{\bigg(\int_0^{\infty}\bigg|\mathcal{F}^{-1}[(\phi_0(2^{-2M}t^{-1}|\cdot|)-\phi_0(2^{2M}t^{-1}|\cdot|))\phi_1(t^{-1}\rho(\cdot))]\ast f(x)\bigg|^2\,\frac{dt}{t}\bigg)^{1/2}}_{4}
\\
+\sum_{k=1}^{\infty}2^{-k\lambda}\Norm{\bigg(\int_0^{\infty}\bigg|\mathcal{F}^{-1}[\phi_1(2^k(1-t^{-1}\rho(\cdot)))]\ast f(x)\bigg|^2\,\frac{dt}{t}\bigg)^{1/2}}_{4}.
\end{multline*}
The first term is clearly $\lesssim\Norm{f}_4$. By Proposition \ref{mainprop}, the third term is also $\lesssim\Norm{f}_4$ if $\lambda>-1/2$. By vector-valued singular integrals, the second term is bounded by
\begin{align*}
\Norm{\bigg(\int_0^{\infty}\bigg|\mathcal{F}^{-1}[\phi_1(t^{-1}\rho(\cdot))]\ast f(x)\bigg|^2\,\frac{dt}{t}\bigg)^{1/2}}_{4},
\end{align*}
and it is straightforward to adapt the proof of Proposition \ref{mainprop} to show that this is $\lesssim\Norm{f}_4$.
\end{proof}

\end{document}